\renewcommand{\epsilon}{\varepsilon}
\newcommand{\R}{\mathbb{R}}
\newcommand{\Z}{\mathbb{Z}}
\newcommand{\C}{\mathbb{C}}
\renewcommand{\phi}{\varphi}
\newcommand{\cotan}{T^*}
\newtheorem{thm}{Theorem}
\newtheorem{lemma}[thm]{Lemma}
\newtheorem{prop}[thm]{Proposition}
\newtheorem{cor}[thm]{Corollary}
\theoremstyle{definition}
\newtheorem{definition}[thm]{Definition}
\newtheorem{example}[thm]{Example}
\newtheorem{claim}[thm]{Claim}
\theoremstyle{remark}
\newtheorem{rmk}[thm]{Remark}
\title{On the rigidity of lagrangian products}
\author{Vinicius G. B. Ramos\footnote{Instituto de Matem\'atica Pura e
  Aplicada -- Estrada Dona Castorina, 110 -- Rio de Janeiro --
  Brazil. Email: \texttt{vgbramos@impa.br}.} \and Daniele Sepe\footnote{Universidade Federal
    Fluminense -- Instituto de Matem\'atica -- Rua Professor Marcos
    Waldemar de Freitas Reis, s/n, Bloco H, Campus do Gragoat\'a --
    Niter\'oi -- Brazil. Email: \texttt{danielesepe@id.uff.br}.}}
\date{}
\begin{document}

\maketitle

\abstract{Motivated by work of the first author, this paper studies symplectic
  embedding problems of lagrangian products that are sufficiently
  symmetric. In general, lagrangian products arise naturally in the study of
  billiards. The main result of the
  paper is the rigidity of a large class of symplectic embedding
  problems of lagrangian products in any dimension. This is achieved
  by showing that the lagrangian products under consideration are
  symplectomorphic to toric domains, and by using the Gromov width and the
  cube capacity introduced by Gutt and Hutchings to obtain rigidity.}
\section{Introduction}

The study of symplectic embeddings lies at the heart of symplectic
topology and was kickstarted by Gromov's celebrated non-squeezing
theorem \cite{gromov}. Since then, many surprising results have been
discovered highlighting the boundary between flexibility and rigidity
in symplectic topology (cf. \cite{chls,hut_sur,schlenk_survey} for
thorough overviews). For the purposes of this paper, it is important
to remark the role that symplectic capacities play in solving
symplectic embedding problems, especially in the case of
four-dimensional toric domains (cf. \cite{mdel,ccfhr,ccdan}). 

Recently, in \cite{vinicius}, the first author studied symplectic embedding problems
involving the 4-dimensional lagrangian bidisk, an example of a class
of symplectic manifolds that are known as {\em lagrangian products}
and arise naturally in the study of billiards
(cf. \cite{artost,ostrover}). The main result in \cite{vinicius} is
the computation of the optimal symplectic embeddings of the lagrangian
bidisk into a ball and an ellipsoid. The novelty of \cite{vinicius} is
to identify the lagrangian bidisk with a concave toric
domain using the standard billiard in the disk, thus allowing one to use the machinery of embedded contact homology (ECH)
capacities to solve the problem.

Inspired by \cite{vinicius}, this paper studies symplectic embedding
problems for a large class of lagrangian products in any
dimension. The main result of the paper is that, for all lagrangian
products under consideration, the corresponding symplectic embedding
problems are rigid, meaning that one cannot do better than inclusion
(see Theorem \ref{thm:main} for a precise statement). The strategy for
the proof is similar to that employed in \cite{vinicius}. Firstly, the
relevant lagrangian products are shown to be symplectomorphic to some
toric domains (see Theorem \ref{thm:sympl}). It is worthwhile
observing that these symplectomorphisms are constructed by
understanding the symplectic geometry of the billiard in the interval
(see Section \ref{sec:billiard-cube}). Secondly, the above
identification allows us to use two symplectic capacities, the well-known Gromov width and the cube
capacity recently introduced by Gutt and Hutchings in \cite{gutt_hut},
to solve the problem (see
Theorem \ref{thm:tor}). To the best of our knowledge, the results of
this paper are the first in the study of symplectic embeddings of
lagrangian products in any dimension.

The results of the present paper, as well as those of \cite{vinicius},
corroborate the connection between integrable billiards and lagrangian
products admitting an integrable Hamiltonian torus action. We plan on
investigating this relation further in future papers.

\subsection{Lagrangian products}

We start by defining the main object of study of this paper.
\begin{definition}
Given $A, B  \subset \R^n$, the {\em lagrangian product} of $A$ and
$B$, denoted by $A\times_L B$, is the following subset of $\R^{2n}$
\[A\times_L B=\left\{(x_1,y_1,\dots,x_n,y_n)\in\R^{2n} \mid (x_1,\dots,x_n)\in A\text{ and }(y_1,\dots,y_n)\in B\right\},\]
endowed with the restriction of the symplectic form $\omega_{0}=\sum_{i=1}^n dx_i\wedge dy_i$.
\end{definition}
This article studies symplectic embedding problems of lagrangian
products and the main result is that many of these embeddings problems
are rigid (see Theorem \ref{thm:main}). Inspired by \cite{vinicius}, one of the key ingredients in
the proof of the main result is to endow lagrangian products that are
`sufficiently symmetric' with
an integrable Hamiltonian toric action (see Theorem
\ref{thm:sympl}). To make the above notion precise, we introduce the
following terminology.

\begin{definition}\label{def:sym}
An open and bounded subset $A \subset \R^n$ is said to be
\begin{itemize}[leftmargin=*]
\item {\em a balanced region} if
  $(x_1,\dots,x_n)\in A\Rightarrow
  \left[-|x_1|,|x_1|\right]\times\dots\times\left[-|x_n|,|x_n|\right]\subset
  A$;
\item {\em a symmetric region} if it is balanced and invariant under permutation of any two coordinates.
\end{itemize}
A balanced or symmetric region $A$ is {\em convex} if $A
\subset \R^n$ is a convex subset, while it is {\em concave} if
$\R^n_{\geq 0}\smallsetminus A$ is a convex subset of $\R^n$ (see Figure
\ref{fig:tor} (a) and (b)).
\end{definition}

\begin{example}\label{exm:unit_ball}
  For any $n \geq 1$ and any $p \in \left[1,\infty\right]$, the open unit
  ball in the $L^p$-norm in $\R^n$, denoted by $B^n_p$,
  is a symmetric region.
\end{example}
Given $X_1,X_2\subset\R^{2n}$, we say that {\em $X_1$ symplectically
  embeds into $X_2$} if there exists a smooth embedding from $X_1$
into $X_2$ preserving $\omega_0$. If $X_1$ symplectically embeds in
$X_2$, we write $X_1\hookrightarrow X_2$. The following theorem is the
main result of this paper.
\begin{thm}\label{thm:main}
Let $A$ and $A'$ be subsets of $\R^n$ satisfying one of the conditions below.
\begin{enumerate}[label=(\roman*)]
\item $A\in\{B_1^n,B_\infty^n\}$ and $A'$ is a convex or concave balanced region,
\item $A$ is a convex symmetric region, and
  $A'\in\{B_1^n,B_\infty^n\}$,
\item $A$ is a convex symmetric region, and $A'$ is a concave
  symmetric region,
\item $A=B^n_p$ and $A'=r\cdot B^n_q$ for some $p,q\in[1,\infty]$ and $r\in]0,\infty[$.
\end{enumerate}
Then
\[B_\infty^n\times_L A \hookrightarrow  B_\infty^n \times_L A'\iff A\subset A'.\]
\end{thm}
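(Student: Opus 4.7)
The backward implication is trivial: if $A\subset A'$, then the identity map on $\R^{2n}$ restricts to a symplectic embedding $B_\infty^n\times_L A\hookrightarrow B_\infty^n\times_L A'$. The non-trivial direction is the forward implication, and my plan follows the two-step strategy announced in the introduction: first translate the lagrangian product embedding problem into a problem about toric domains, then solve it via symplectic capacities.

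The first step would be to invoke Theorem~\ref{thm:sympl}. In each of the four cases, the hypotheses on $A$ and $A'$ (balanced vs.\ symmetric, convex vs.\ concave) are exactly those under which Theorem~\ref{thm:sympl} asserts the existence of symplectomorphisms $B_\infty^n\times_L A\cong X_A$ and $B_\infty^n\times_L A'\cong X_{A'}$, where $X_A$, $X_{A'}$ are toric domains determined by the positive-octant parts of $A$ and $A'$. These symplectomorphisms arise from action-angle coordinates for the integrable billiard dynamics in the cube $B_\infty^n$ (one $S^1$-action per coordinate), as described in Section~\ref{sec:billiard-cube}; the concavity or convexity of the region is transferred to the corresponding structure on the toric domain.

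Once the problem has been transported to the toric setting, the second step is to apply Theorem~\ref{thm:tor}, which asserts exactly the desired rigidity statement $X_A\hookrightarrow X_{A'}\Rightarrow A\subset A'$ in each of the four cases. The workhorses behind that theorem are two symplectic capacities: the Gromov width, which detects the ``extent'' of a toric domain along the coordinate axes and is controlled by $L^1$- or $L^\infty$-ball data, and the Gutt--Hutchings cube capacity, which detects the ``diagonal'' extent and gives the sharp obstruction for embeddings into cubes. Using monotonicity under symplectic embeddings and the explicit computations of these capacities for convex and concave toric domains, one extracts enough numerical constraints on $A$ and $A'$ to force the inclusion.

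The hardest case is expected to be (iv), in which $B_p^n$ and $r\cdot B_q^n$ for intermediate $p,q\in(1,\infty)$ are convex symmetric regions but need not be concave, so that the direct concave/convex dichotomy of cases (i)--(iii) does not apply; here one should exploit the fact that $L^p$-balls are a two-parameter family (radius and exponent) completely determined by their behaviour along a coordinate axis and along the diagonal, so that the combined information from the Gromov width and the cube capacity suffices to pin down when $B_p^n\subset r\cdot B_q^n$. The matching of these capacities with the geometric quantities $\sup\{t: t\,e_i\in A\}$ and $\sup\{t:(t,\dots,t)\in A\}$ on the toric side is where the bulk of the technical work in Theorem~\ref{thm:tor} is expected to reside.
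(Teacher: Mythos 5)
Your proposal is correct and follows essentially the same route as the paper: the backward direction is inclusion, and the forward direction combines Theorem~\ref{thm:sympl} (identifying $B^n_\infty\times_L A$ with the toric domain $X_{4|A|}$) with the capacity-based rigidity of Theorem~\ref{thm:tor}, exactly as the paper does. The only cosmetic difference is that you omit the factor $4$ in the moment-map image, which is harmless since $4|A|\subset 4|A'|$ is equivalent to $A\subset A'$.
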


\begin{rmk}\label{rmk:propaganda}
  To the best of our knowledge, Theorem \ref{thm:main} is one of very
  few symplectic embedding results in dimensions greater than four,
  particularly for (families of) bounded sets. Some results in higher
  dimensions can be found in
  \cite{guth,hind_kerman,buse_hind,cg_hind}. 
\end{rmk}

The proof of Theorem \ref{thm:main} goes in two steps. First, we prove
the existence of a symplectomorphism between any lagrangian product of
the form $B^n_{\infty} \times_L A$, where $A$ is balanced, and an
appropriate toric domain (see Definition \ref{defn:toric_domain} and
Theorem \ref{thm:sympl}). This allows to reformulate Theorem
\ref{thm:main} in
terms of symplectic embeddings between certain toric domains and their
moment map images (see
Theorem \ref{thm:tor}). To solve the latter problem, we use two
symplectic capacities to
show that we cannot do better than inclusion in the corresponding
cases: the Gromov width and the cube capacity. The latter was recently
introduced in \cite{gutt_hut}.

\begin{figure}
\centering
\begin{subfigure}[b]{0.4\textwidth}
\centering
\def\svgwidth{0.5\textwidth}
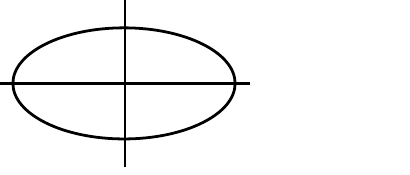
\caption{Convex balanced region}
\label{fig:conv_reg}
\end{subfigure}
\quad
\begin{subfigure}[b]{0.4\textwidth}
\centering
\def\svgwidth{0.4\textwidth}
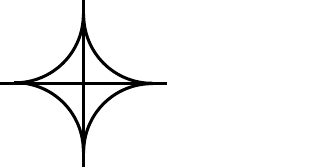
\caption{Concave symmetric region}
\label{fig:conc_reg}
\end{subfigure}\\

\vspace{0.35cm}
\begin{subfigure}[b]{0.4\textwidth}
  \centering
\def\svgwidth{0.7\textwidth}
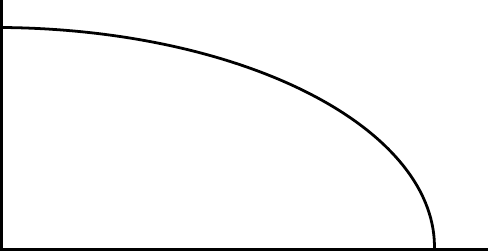
\caption{Convex toric domain}
\label{fig:conv_tor}
\end{subfigure}
\quad
\begin{subfigure}[b]{0.4\textwidth}
\centering
\def\svgwidth{0.5\textwidth}
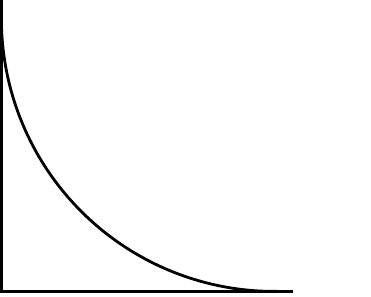
\caption{Concave symmetric toric domain}
\label{fig:conc_tor}
\end{subfigure}
\caption{Balanced regions and their corresponding toric domains}
\label{fig:tor}
\end{figure}

\subsection{Toric domains}\label{sec:tor}
Consider the standard integrable toric action
$\mathbb{T}^n = \R^n/\Z^n \curvearrowright \left(\R^{2n} = \C^n,\omega_0\right)$ defined by
$$ \left(\theta_1,\dots,\theta_n\right) \cdot \left(z_1,\dots,z_n\right) =
\left(e^{2\pi i \theta_1}z_1,\dots,e^{2\pi i \theta_n}z_n\right). $$
\noindent
Identifying the dual of the Lie algebra of $\mathbb{T}^n$ with $\R^n$,
one of the moment maps for the above action is $\boldsymbol{\mu}(z_1,\dots,z_n) =
\left(\pi \lvert z_1 \rvert^2, \dots \pi \lvert z_n\rvert^2\right)$.

\begin{definition}\label{defn:toric_domain}
  Given an open subset $\Omega \subset \R^n_{\geq 0}$,
  the {\em toric domain associated to $\Omega$} is the symplectic manifold
  $\left(X_\Omega,\omega_\Omega\right)$, where $X_{\Omega}:= \boldsymbol{\mu}^{-1}\left(\Omega\right)$
  and $\omega_\Omega = \omega_{0}|_{X_\Omega}$.
\end{definition}
Henceforth, to simplify notation, we denote the toric domain
associated to $\Omega$ by $X_\Omega$. \\

Given a balanced region $A \subset \R^n$, we set $|A|:=A\cap
\R^n_{\geq 0}$. We note that, since $A$ is balanced, $A$ is determined
by $|A|$; moreover, $|A| \subset \R^n_{\geq 0}$ is open. For any
subset $U \subset \R^n$, we set
$$4U:=\left\{\left(x_1,\ldots,x_n\right) \in \R^n \, \bigg| \,
  \left(\frac{1}{4}x_1,\ldots,\frac{1}{4}x_n\right) \in U\right\}.$$
\noindent
The following result is the first step towards proving Theorem \ref{thm:main}.
\begin{thm}\label{thm:sympl}
Let $A \subset \R^n$ be a balanced region. Then there is a symplectomorphism
\[B_\infty^n\times_L A \cong X_{4|A|}.\]
\end{thm}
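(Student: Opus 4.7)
The plan is to construct an explicit symplectomorphism $\Phi\colon B_\infty^n\times_L A\to X_{4|A|}$ guided by the symplectic geometry of the billiard in the interval $[-1,1]$, to be developed in Section~\ref{sec:billiard-cube}. The guiding principle is that a billiard trajectory in $[-1,1]$ at momentum level $|y_i|=c$ is periodic with action $4c$, which should match the value $\pi|z_i|^2$ of the natural toric moment map $\boldsymbol{\mu}$ on $\C$. I therefore aim for a $\Phi$ satisfying $\boldsymbol{\mu}\circ\Phi(x,y)=4(|y_1|,\ldots,|y_n|)$ on the open dense subset where the formula makes sense; since $A$ is balanced, the condition $(|y_1|,\ldots,|y_n|)\in|A|$ is equivalent to $y\in A$, so such a $\Phi$ would indeed map $B_\infty^n\times_L A$ bijectively onto $\boldsymbol{\mu}^{-1}(4|A|)=X_{4|A|}$.

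Coordinate-wise, the natural candidate is
\[\Phi_i(x_i,y_i)=\sqrt{\tfrac{4|y_i|}{\pi}}\,e^{i\alpha_i(x_i,y_i)},\]
with $\alpha_i$ chosen by unfolding the billiard in $[-1,1]$ to the geodesic flow on the circle $S^1_4$ of circumference $4$: the two halves $y_i\gtrless 0$ correspond to the two sheets of the natural $2$-to-$1$ symplectic covering $T^*S^1_4\to T^*([-1,1])$, and the action--angle map on $T^*S^1_4$ supplies $\alpha_i$ (up to a $\Z_2$-symmetry encoding the deck transformation). A direct computation then shows that on the open dense set $\{y_i\neq 0\ \forall i\}$ the product $\Phi=(\Phi_1,\ldots,\Phi_n)$ is symplectic and satisfies the desired moment-map identity, while mapping the source bijectively onto the corresponding open dense subset of $X_{4|A|}$.

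The main obstacle is upgrading this naive coordinate-wise formula into a genuine smooth symplectomorphism defined on all of $B_\infty^n\times_L A$. Two intertwined issues appear: $\sqrt{|y_i|}$ is not smooth at $y_i=0$, and the naive formula collapses the real codimension-one locus $\{y_i=0\}$ in the source onto the real codimension-two locus $\{z_i=0\}$ in the target. Both issues trace back to the polar-coordinate singularity at the origin of $\C$; resolving them requires interpreting the formula in Cartesian coordinates on $\C^n$, where the $\Z_2^n$-equivariance produced by the billiard unfolding allows the would-be singularities to be absorbed into the standard polar-to-Cartesian smoothing at the origin of each factor. The billiard analysis of Section~\ref{sec:billiard-cube} is designed precisely to carry out this smoothing; once it is in place, one checks directly that $\Phi^*\omega_0=\omega_0$ and that $\Phi$ is a bijection onto $X_{4|A|}$, concluding the proof.
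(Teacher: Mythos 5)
There is a genuine gap, and it sits exactly at the point you try to wave through. Your construction insists on the exact intertwining identity $\boldsymbol{\mu}\circ\Phi(\mathbf{x},\mathbf{y})=4\left(|y_1|,\dots,|y_n|\right)$, and this identity is incompatible with $\Phi$ being a bijection, let alone a diffeomorphism: in each factor the zero-momentum locus $\{y_i=0\}$ is a one-dimensional interval $\{(x_i,0):|x_i|<1\}$ in $B^1_\infty\times_L\R$, while its prescribed image $\{z_i=0\}$ is a single point of $\C$ (codimension two). Equivalently, under the billiard unfolding the set $\{y_i=0\}$ corresponds to (part of) the zero section of $\cotan S^1$, a circle, and no map can send it to the origin of $\C$ injectively. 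So the failure of your formula along $\{y_i=0\}$ is not a removable polar-coordinate singularity: the usual polar-to-Cartesian smoothing works when the angle fiber over the origin is already collapsed in the source (as for $\mathrm{pr}_1:\left[0,\infty\right[\times S^1\to\left[0,\infty\right[$ modulo the fiber over $0$), which is precisely not the situation here, and no $\Z_2^n$-equivariance changes the dimension count. Any correct symplectomorphism must distort the moment-map identity near $\{y_i=0\}$, so "one checks directly that $\Phi^*\omega_0=\omega_0$ and that $\Phi$ is a bijection" cannot be carried out for the map you wrote down; the open-dense-set statement is fine (your constant $4|y_i|$ is indeed the billiard action, matching $I_0(c)=4\sqrt{2c}$), but the extension over the degenerate locus is the entire difficulty, not a routine smoothing.

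This is exactly why the paper does not set the regularization parameter to zero. It replaces the billiard by the smooth systems $H_{\epsilon}(x,y)=\frac12\bigl(y^2+\epsilon\frac{1}{1-x^2}\bigr)$, whose level sets are circles shrinking to a single elliptic point, so that for each $\epsilon>0$ the product system is honestly toric and action-angle coordinates plus the elliptic normal form and the Karshon--Lerman classification give a global symplectomorphism $\boldsymbol{\Psi}_{\epsilon}$ of $B^n_\infty\times_L\R^n$ with $\R^{2n}$ intertwining $\Phi_{\epsilon}$ with $\boldsymbol{\mu}$ (Corollary \ref{cor:glob_sing_general}). Since no single $\epsilon$ yields the desired identity on $B^n_\infty\times_L A$, the theorem is then obtained by a limiting argument: nested exhaustions $P_{\epsilon}$ of the source whose images exhaust $X_{4|A|}$ (Lemma \ref{lemma:family}), glued via the symplectic isotopy extension theorem into a symplectomorphism that satisfies no exact moment-map identity. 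If you want to salvage your route, you must either reproduce such a regularize-and-glue (or Moser-type) argument near $\{y_i=0\}$, or invoke a result that performs it for you; as written, the key step is asserted rather than proved.
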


Figures \ref{fig:tor} (c) and (d) show the moment map images of the
toric domains obtained from the regions in Figures \ref{fig:tor} (a)
and (b) respectively. The proof of Theorem \ref{thm:sympl}, carried
out in Section \ref{sec:billiard-cube}, uses the integrability of a system which models
billiards on an interval and the fact that we decompose
$B^n_\infty\times_L\R^n$ as a product of $n$ symplectic factors (see
Section \ref{sec:billiard-cube}). This is the main novelty
of this paper and might be of independent interest. In
spirit, it is a similar result to the existence of a symplectomorphism
between the lagrangian bidisk and a concave toric domain proved in
\cite{vinicius}, although the integrable system in \cite{vinicius} is
different from the one in the current paper.

\begin{rmk}\label{rmk:context}
  Some particular cases of Theorem \ref{thm:sympl} are known or could
  be easily deduced from existing results in the
  literature. For instance, the ideas in \cite[Section 2]{schlenk}
  allow to prove Theorem \ref{thm:sympl} in the case in which $A$ is a
  parallelepiped. Besides this family, little seems to be known
  in general, although it is worth mentioning that, if $n=2$,
  \cite[Corollary 4.2]{lat_mc_schl} proves the case $A = B^2_1$
  by using a non-trivial result due to McDuff (cf. \cite[Theorem
  1.1]{mcduff_bl}), which is intrinsically different from our
  constructive techniques and only applicable in this specific case.
\end{rmk}

Assuming Theorem \ref{thm:sympl}, Theorem \ref{thm:main} can be
restated in terms of toric domains. To this end, we introduce the
following notion.
\begin{definition}\label{defn:con_con_toric}
Given a convex (respectively concave) balanced region $A\subset \R^n$, $X_{|A|}$ is said to be a
{\em convex} (respectively {\em concave}) toric domain. If, in
addition, $A$ is symmetric, $X_{|A|}$ is said to be {\em symmetric}.
\end{definition}
\begin{rmk}
If $\Omega \subset \R^n_{\ge 0}$ is open and bounded, the existence of a balanced region $A$ such that $\Omega=|A|$ is equivalent to the following condition:
\begin{equation}\label{eq:cond}(x_1,\dots,x_n)\in\Omega\Rightarrow
  [0,x_1]\times \dots \times [0,x_n]\subset \Omega.\end{equation}
In particular, the notions of convex and concave toric domains from
\cite{ccfhr,hut_bey} coincide with those of Definition
\ref{defn:con_con_toric}, except that we consider open domains instead
of compact domains. We note that the definition of convex toric domain
in \cite{ccdan} is slightly different and allows for toric domains that do not satisfy \eqref{eq:cond}.
\end{rmk}

For any $n \geq 1$ and any $p \in \left[1,\infty\right]$, we set $\Omega^n_p :=|B^n_p|$. Assuming Theorem \ref{thm:sympl}, Theorem \ref{thm:main} is a straightforward consequence of the following result.
\begin{thm}\label{thm:tor}
  Let $\Omega$ and $\Omega'$ be open subsets of $\R^n_{\ge 0}$
  satisfying one of the conditions below.
  \begin{enumerate}[label=(\roman*), ref = (\roman*)]
  \item \label{item:5}$\Omega\in\{\Omega^n_1,\Omega^n_\infty\}$ and $X_{\Omega'}$ is a convex or concave toric domain,
  \item \label{item:8} $X_\Omega$ is a convex symmetric toric domain and $\Omega'\in\{\Omega^n_1,\Omega^n_\infty\}$,
  \item \label{item:9} $X_\Omega$ is a convex symmetric toric domain and $X_{\Omega'}$ is a concave symmetric toric domain,
  \item \label{item:10} $\Omega=\Omega^n_p$ and $\Omega'=r\cdot \Omega^n_q$ for some $p,q\in[1,\infty]$ and $r\in]0,\infty[$.
  \end{enumerate}
  Then
\begin{align*}
X_\Omega \hookrightarrow X_{\Omega'} &\iff \Omega\subset \Omega',\\
\end{align*}
\end{thm}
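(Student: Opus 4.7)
The direction ``$\Omega \subset \Omega' \Rightarrow X_\Omega \hookrightarrow X_{\Omega'}$'' is immediate, as inclusion is a symplectic embedding. The plan for the converse is to use two monotone symplectic capacities from \cite{gutt_hut}, the Gromov width $c_{\mathrm{Gr}}$ and the cube capacity $c_{\square}$, and to exploit their explicit combinatorial descriptions on convex and concave toric domains. Specifically, $c_{\mathrm{Gr}}(X_{\Omega'}) = \sup\{a > 0 : a\Omega^n_1 \subset \Omega'\}$ (the largest inscribed simplex, for $\Omega'$ convex or concave), and on concave $\Omega'$ one has $c_{\square}(X_{\Omega'}) = \sup\{a > 0 : a\Omega^n_\infty \subset \Omega'\}$ (the largest inscribed cube, which by \eqref{eq:cond} equals the diagonal reach of $\Omega'$). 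On the model domains these give $c_{\mathrm{Gr}}(X_{\Omega^n_1}) = c_{\mathrm{Gr}}(X_{\Omega^n_\infty}) = 1$, $c_{\square}(X_{\Omega^n_\infty}) = 1$, and $c_{\square}(X_{\Omega^n_1}) = 1/n$.

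Case \ref{item:5} follows by direct monotonicity: if $X_{\Omega^n_1} \hookrightarrow X_{\Omega'}$, then $c_{\mathrm{Gr}}(X_{\Omega'}) \geq 1$ and the simplex formula gives $\Omega^n_1 \subset \Omega'$; analogously, $X_{\Omega^n_\infty} \hookrightarrow X_{\Omega'}$ together with $c_{\square}$-monotonicity gives $\Omega^n_\infty \subset \Omega'$.

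For cases \ref{item:8} and \ref{item:9}, the capacity bound must be promoted to a set inclusion, and here the convex symmetric hypothesis on $\Omega$ is exploited through a symmetrization argument. Supposing $\Omega \not\subset \Omega'$ and picking $x \in \Omega \setminus \Omega'$, averaging over the permutation orbit of $x$ produces a diagonal point $(\bar, \ldots, \bar) \in \Omega$ with $\bar = \tfrac{1}{n}\sum_i x_i$, using convexity of $\Omega$. In case \ref{item:9}, the concavity of $\Omega'$ (equivalently, convexity of $\R^n_{\geq 0}\setminus \Omega'$) keeps this averaged point outside $\Omega'$, and \eqref{eq:cond} then inscribes in $\Omega$ a cube $[0,\bar]^n$ strictly larger than any cube inscribed in $\Omega'$, violating $c_{\square}(X_\Omega) \leq c_{\square}(X_{\Omega'})$. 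For case \ref{item:8} with $\Omega' = \Omega^n_\infty$, I instead use the Gromov width: convex symmetry of $\Omega$ forces $c_{\mathrm{Gr}}(X_\Omega) = \max_i \sup_{x \in \Omega} x_i$, so $c_{\mathrm{Gr}}$-monotonicity bounds this quantity by $1$ and hence $\Omega \subset \Omega^n_\infty$.

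Case \ref{item:10} follows by computing $c_{\mathrm{Gr}}(X_{\Omega^n_p}) = 1$ for every $p \in [1,\infty]$ and $c_{\square}(X_{\Omega^n_p}) = n^{-1/p}$ from the Gutt--Hutchings formula $c_{\square}(X_\Omega) = \tfrac{1}{n}\min\{\max_{x \in \Omega} v\cdot x : v \in \Z^n_{\geq 0},\, |v| = n\}$, the minimum being attained at $v = (1, \ldots, 1)$ by permutation symmetry. Combining both obstructions forces $r \geq \max(1, n^{1/q - 1/p})$, which matches the threshold for $\Omega^n_p \subset r\Omega^n_q$. The hardest step will be establishing the combinatorial formulas for $c_{\square}$ on convex symmetric and concave symmetric toric domains and verifying that the symmetrization argument in cases \ref{item:8} and \ref{item:9} upgrades the capacity inequality to genuine set inclusion; the key point is that permutation symmetry of $\Omega$ pins the extremal vector in the Gutt--Hutchings min--max to $(1, \ldots, 1)$, which is precisely what closes the gap between a capacity comparison and a set-theoretic containment.
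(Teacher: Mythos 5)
Your proposal is correct and follows essentially the same route as the paper: the trivial inclusion direction, then the Gutt--Hutchings formulas for the Gromov width $c_1$ and the cube capacity $c_\infty$ on convex and concave toric domains, applied case by case. Your symmetrization-by-averaging step in cases (ii)--(iii) is just the contrapositive phrasing of the paper's ``$\Omega$ lies below the hyperplane through the diagonal point'' argument, and your merged threshold $r\geq\max\left(1,n^{1/q-1/p}\right)$ in case (iv) is verified by exactly the H\"older computation the paper carries out.
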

\begin{rmk}\label{rmk:ball_poly}
The domains $X_{\Omega^n_1}$ and $X_{\Omega^n_\infty}$ are usually known as the ball $B(1)=E(1,\dots,1)$ and the polydisk $P(1,\dots,1)$, respectively.
\end{rmk}

\subsection{Symplectic capacities}\label{sec:sympl-capac}
The proof of Theorem \ref{thm:tor} provided below uses symplectic capacities. A symplectic capacity is a map $c$ from a certain class of symplectic manifolds to $[0,\infty]$ with the following properties:
\begin{itemize}
\item[(a)] $c(X,r \cdot \omega)=r\cdot c(X,\omega)$, for $r\in]0,\infty[$;
\item[(b)] $(X_1,\omega_1)\hookrightarrow (X_2,\omega_2)\Rightarrow c(X_1,\omega_1)\le c(X_2,\omega_2)$.
\end{itemize}
For star-shaped
domains $X \subset \R^{2n}$ equipped with the standard symplectic
form\footnote{Since we only use capacities of subsets of $\R^{2n}$
  equipped with the standard symplectic form, we drop the
  symplectic form from the notation.}, the following quantities are
symplectic capacities:
\begin{equation*}
  \begin{split}
    c_{1}(X) &=\sup\left\{r\in\R \mid X_{r\cdot \Omega^n_1}
      \hookrightarrow X \right\}, \\
    c_{\infty}(X) &=\sup \left\{r\in\R \mid X_{r\cdot \Omega^n_\infty}
      \hookrightarrow X \right\}.
\end{split}
\end{equation*}

\begin{rmk}\label{rmk:sc}
  The capacity $c_1(X)$ was first introduced by Gromov in
  \cite{gromov} and is known in the literature as the Gromov width of
  $X$, while $c_{\infty}(X)$ is shown to be a capacity by Gutt and Hutchings in
  \cite{gutt_hut} and is the analog of $c_1(X)$ for a cube.
\end{rmk}

The following result is a simple consequence of some results in \cite{gutt_hut}.
\begin{thm}\label{thm:c1cb}
Let $X_\Omega$ be a convex or concave toric domain. Then
\begin{align}
c_1(X_\Omega)&=\max\{r\in\R \mid r\cdot \Omega^n_1\subset\Omega\},\label{eq:c1}\\
c_\infty(X_\Omega)&=\max\{r\in\R \mid r\cdot \Omega^n_\infty\subset \Omega\}.\label{eq:cb}
\end{align}
\end{thm}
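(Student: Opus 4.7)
The plan is to prove each of \eqref{eq:c1} and \eqref{eq:cb} as a pair of matching inequalities, with the ``$\ge$'' direction from elementary monotonicity and the ``$\le$'' direction from the explicit capacity calculations of Gutt and Hutchings.

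For the easy direction, observe that if $r>0$ satisfies $r\cdot\Omega^n_1\subset\Omega$, then $X_{r\cdot\Omega^n_1}\subset X_\Omega$ as subsets of $\R^{2n}$, and the identity map is a symplectic embedding. By the definition of $c_1$ this gives $c_1(X_\Omega)\ge r$, and taking the supremum, which is attained because $\Omega$ is open and bounded, yields the lower bound in \eqref{eq:c1}. The identical argument applied to the polydisk $X_{r\cdot\Omega^n_\infty}$ in place of the ball gives the lower bound in \eqref{eq:cb}.

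The upper bounds are where the assumption that $X_\Omega$ is convex or concave enters in an essential way, and here the plan is to invoke \cite{gutt_hut}. In that paper, Gutt and Hutchings introduce a family of symplectic capacities $c_k^{GH}$ and compute them explicitly on convex and concave toric domains. The relevant outputs to quote are, first, that on such domains the Gromov width $c_1$ coincides with $c_1^{GH}$ and admits an explicit formula in terms of the combinatorial data of $\Omega$, and second, that $c_\infty$ is well-defined as a symplectic capacity and admits an analogous formula. Substituting these formulas into the definitions of $c_1$ and $c_\infty$ produces the desired upper bounds.

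The main obstacle is bookkeeping rather than a conceptual hurdle: the formulas proved in \cite{gutt_hut} are typically phrased as minima of support-function-type quantities associated to $\Omega$ over certain combinatorial data such as lattice vectors in $\Z^n_{\ge 0}$ with prescribed coordinate sum, and one must check that for a convex or concave $\Omega$ these minima reduce to the clean ``maximum inscribed simplex/cube'' expressions on the right-hand sides of \eqref{eq:c1} and \eqref{eq:cb}. This reduction uses the convexity or concavity of $\Omega$ together with the polyhedral structure of $\Omega^n_1$ and $\Omega^n_\infty$, and has to be written out separately in the two cases; once done, the theorem follows by combining the two inequalities.
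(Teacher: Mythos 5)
Your proposal follows essentially the same route as the paper: the easy inequality comes from the inclusion $X_{r\cdot\Omega^n_1}\subset X_\Omega$ (respectively $X_{r\cdot\Omega^n_\infty}\subset X_\Omega$) and monotonicity, and the sharp upper bounds come from the computations in \cite{gutt_hut} (their Theorem 1.6 for convex domains, Corollary 1.16 for the Gromov width of concave domains, Theorem 1.18 for the cube capacity), with the combinatorial formulas reduced to the inscribed simplex/cube expressions using convexity or concavity of $\Omega$. The one step you should make explicit is that for convex toric domains Gutt--Hutchings compute their normalized capacity $c_1^{SH}$ rather than the Gromov width itself, so the upper bound requires the inequality $c_1(X)\le c_1^{SH}(X)$ (from normalization on balls plus monotonicity) before your two-sided sandwich closes --- which is exactly how the paper argues.
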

\begin{proof}
In \cite{gutt_hut} Gutt and Hutchings define a normalized symplectic
capacity $c_1^{SH}$ for star-shaped domains in $\R^{2n}$, {\it i.e.}
on balls and cylinders, $c_1^{SH}$ agrees with $c_1$. In particular,
for any star-shaped domain $X \subset \R^{2n}$,
\begin{equation}
  \label{eq:11}
  c_1(X)\le c_1^{SH}(X).
\end{equation}
\noindent
Moreover, they show in \cite[Theorem 1.6]{gutt_hut} that for a convex toric domain $X_\Omega$,
\[c_1^{SH}(X_\Omega)=\min_{i=1,\dots,n}\sup\{r\in\R \mid r\cdot
e_i\in\Omega\}.\] From the convexity of $\Omega$ and the definition of
$c_1$, we obtain
\begin{equation}\label{eq:sh}
c_1^{SH}(X_\Omega)=\max\{r\in\R \mid r\cdot \Omega^n_1\subset\Omega\} \leq c_1(X_{\Omega}).
\end{equation}
\noindent
Combining \eqref{eq:11} and \eqref{eq:sh}, we obtain \eqref{eq:c1}
for a convex toric domain. For a concave toric domain, \eqref{eq:c1} is a direct consequence of \cite[Corollary 1.16]{gutt_hut}.

To complete the proof, observe that \cite[Theorem 1.18]{gutt_hut} gives \eqref{eq:cb} for convex and concave toric domains.
\end{proof}

The capacities $c_1$ and $c_{\infty}$ can be used to prove Theorem
\ref{thm:tor}, which, in turn, provides a proof of the main result of
this paper assuming Theorem \ref{thm:sympl}.
\begin{proof}[Proof of Theorem \ref{thm:tor}]
  Given open subsets $\Omega, \Omega' \subset \R^n_{\geq 0}$, the
  inclusion $\Omega \subset \Omega'$ implies the existence of a
  symplectic embedding $X_{\Omega} \hookrightarrow X_{\Omega'}$ (without
  imposing any restrictions). Therefore, it remains to prove the other
  implication. To this end, suppose that $X_\Omega\hookrightarrow X_{\Omega'}$ where $\Omega$
  and $\Omega'$ satisfy one of the conditions \ref{item:5} --
  \ref{item:10}. The aim is to show that $\Omega\subset\Omega'$. We proceed case by case.
  \begin{enumerate}[label=(\roman*), ref=(\roman*)]
  \item \label{item:11} This is a direct consequence of Theorem \ref{thm:c1cb}.
    First consider the case $\Omega=\Omega^n_1$. It follows from
    \eqref{eq:c1} that $c_1(X_{\Omega^n_1})=1$. So
    $1=c_1(X_{\Omega^n_1})\le c_1(X_{\Omega'})$. Again from
    \eqref{eq:c1} we obtain $\Omega^n_1\subset \Omega'$. The case
    $\Omega=\Omega^n_\infty$ is dealt with analogously
    using $c_\infty$ and \eqref{eq:cb}.

  \item \label{item:12} Suppose first that $\Omega'=\Omega^n_1$. It follows from
    \eqref{eq:cb} that $1/n=c_{\infty}(X_{\Omega^n_1})\ge
    c_{\infty}(X_\Omega)$. Since $\Omega$ is symmetric and convex, it
    follows from \eqref{eq:cb} that $\Omega$ lies below the hyperplane
    normal to $(1,\dots,1)$ passing through the point
    $(c_{\infty}(X_\Omega),\dots,c_{\infty}(X_\Omega))$. So
    $x_1+\dots+x_n< n\cdot c_{\infty}(X_\Omega)\le 1$ for all
    $(x_1,\dots,x_n)\in\Omega$. Therefore $\Omega\subset\Omega^n_1$.

    On the other hand, suppose that $\Omega'=\Omega^n_\infty$. If
    $\{e_1,\dots,e_n\}$ denotes the canonical basis of $\R^n$,
    $\Omega$ being symmetric implies that for all $i,j = 1,\ldots, n$,
    $$ \sup\{r>0 \mid r\cdot e_i\in\Omega\}  = \sup\{r>0 \mid r\cdot
    e_j\in\Omega\}.$$
    \noindent
    In particular, since $\Omega$ is convex, it follows from \eqref{eq:c1}
    that, for any $i=1,\ldots, n$, $c_1(X_\Omega)=\sup\{r>0 \mid r\cdot e_i\in\Omega\}$.
    Therefore, $\Omega\subset [0,c_1(X_\Omega)]^n$; since
    $c_1(X_\Omega)\le c_1(X_{\Omega^n_\infty})=1$, it follows that
    $\Omega\subset[0,1]^n$. As $\Omega$ is open, $\Omega \subset
    [0,1[^n = \Omega^n_\infty$ as desired.

  \item \label{item:13} Arguing as in the first part of
    \ref{item:12}, it follows that if $X_{\Omega'}$ is concave
    and symmetric, then $c_{\infty}(X_{\Omega'})\cdot\Omega^n_1\subset
    \Omega'$. Since $X_\Omega\hookrightarrow X_{\Omega'}$, it
    follows from \ref{item:12} that
    \[\Omega\subset c_{\infty}(X_\Omega)\cdot \Omega^n_1\subset
    c_{\infty}(X_{\Omega'})\cdot \Omega^n_1\subset \Omega'.\]

  \item \label{item:14} Suppose first that $p\le q$. It follows from \eqref{eq:c1}
    that $c_1(\Omega^n_p)=c_1(\Omega^n_q)=1$. So
    $\Omega^n_p\hookrightarrow r\cdot \Omega^n_q$ implies that $1\le
    r$. Since $p\le q$, we conclude that
    $\Omega^n_p\subset\Omega^n_q\subset r\cdot \Omega^n_q$.

    Suppose that $q\le p$. From \eqref{eq:cb} we obtain $c_{\infty}(X_{\Omega^n_s})=\frac{1}{n^{1/s}}$. So
    \begin{equation}\label{eq:pqr}\frac{1}{n^{1/p}}\le \frac{r}{n^{1/q}}.
    \end{equation}
    Let $(x_1,\dots,x_n)\in\Omega^n_p$. It follows from H\"{o}lder's inequality and \eqref{eq:pqr} that
    \[ \sum_{i=1}^n \left(\frac{x_i}{r}\right)^q\le \frac{1}{r^q}\left(\sum_{i=1}^n x_i^p\right)^{\frac{q}{p}} n^{\frac{p-q}{p}}\le \left(\frac{n^{\frac{p-q}{pq}}}{r}\right)^q\le 1,\]
    \noindent
    which implies that $(x_1,\dots,x_n)\in r\cdot \Omega^n_q$.
  \end{enumerate}
\end{proof}

\subsection{Outline of the paper}
The rest of this paper is structured as follows. In Section
\ref{sec:emb}, we give another application of Theorems \ref{thm:main}
and  \ref{thm:sympl}
to symplectic embeddings in dimension 4. Section
\ref{sec:billiard-cube} contains the proof of Theorem \ref{thm:sympl},
which relies on understanding a family of integrable systems modeling
the billiard in the interval.

\section{Symplectic embeddings}\label{sec:emb}
In this section, we discuss the rigidity of some symplectic embeddings
involving the lagrangian bidisk studied in \cite{vinicius}, proving
that both rigidity and flexibility occur (see Theorem \ref{thm:balls}
and Corollary \ref{cor:three}). First we introduce an equivalence relation for four-dimensional lagrangian products.
\begin{definition}\label{def:equiv}
Let $A_1$, $B_1$, $A_2$ and $B_2$ be open sets of $\R^2$ containing
the origin. The lagrangian products $A_1\times_L B_1 $ and
$A_2\times_L B_2$ are {\em equivalent} if there exist $a>0$ and $U\in
SO(2)$ such that $A_1=a U\cdot A_2$ and $B_1=a^{-1} U\cdot B_2$, or
$B_1=a U\cdot A_2$ and $A_1=a^{-1} U\cdot B_2$. In this case, we write $A_1\times_L B_1\sim A_2\times_L B_2$
\end{definition}
Observe that two equivalent lagrangian products are symplectomorphic.

\begin{definition}
Let $A$, $B$, $C$ and $D$ be connected, open sets of $\R^2$ containing
the origin. The symplectic embedding problem $A\times_L
B\overset{?}{\hookrightarrow} C\times_L D$ is {\em rigid}
if \[A\times_L B\hookrightarrow (a C)\times_L D \text{ for some } a >
0 \,\Rightarrow A\times_L B\sim A'\times_L B'\subset (a C')\times_L
D'\sim C\times_L D,\]
\noindent
for some open subsets $A',B',C',D'$ of $\R^2$ containing the origin.
\end{definition}

The following theorem is the main result of this section.

\begin{thm}\label{thm:balls}
  For any $p\in[2,+\infty]$ the symplectic embedding problems
  \begin{equation*}
    B^2_2\times_L B^2_2 \overset{?}{\hookrightarrow} B^2_\infty\times_L
    B^2_p \quad \text{and} \quad B^2_\infty\times_L B^2_p\overset{?}{\hookrightarrow} B^2_2\times_L B^2_2
  \end{equation*}
  are rigid.
\end{thm}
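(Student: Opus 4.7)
The plan is to reduce both embedding problems to embeddings between toric domains, by combining Theorem~\ref{thm:sympl} with the symplectomorphism $B^2_2 \times_L B^2_2 \cong X_{\Omega_D}$ from \cite{vinicius} (where $\Omega_D \subset \R^2_{\geq 0}$ is an explicit concave region with explicitly computed capacities $c_1(X_{\Omega_D})$ and $c_\infty(X_{\Omega_D})$), and then to extract the desired lower bounds on $a$ from the capacities $c_1$ and $c_\infty$ via Theorem~\ref{thm:c1cb}. The two embedding directions are handled by the two different capacities.

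For the first direction $B^2_2 \times_L B^2_2 \hookrightarrow (aB^2_\infty) \times_L B^2_p$, I would apply the symplectic rescaling $(x,y) \mapsto (x/a, ay)$ followed by Theorem~\ref{thm:sympl} to identify the target with the convex toric domain $X_{4a\Omega^2_p}$, whose Gromov width is $c_1(X_{4a\Omega^2_p}) = 4a$ by Theorem~\ref{thm:c1cb} (using $c_1(X_{\Omega^2_p}) = 1$ for every $p \in [1, \infty]$, which follows from the vertex $e_i$ lying on $\partial \Omega^2_p$). Monotonicity of the Gromov width under the embedding forces $c_1(X_{\Omega_D}) \leq 4a$, producing a lower bound on $a$ that is independent of $p$. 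I would then verify this bound coincides with the threshold for the direct inclusion $B^2_2 \times_L B^2_2 \subset (aB^2_\infty) \times_L B^2_p$, governed by $B^2_2 \subset aB^2_\infty$ (requiring $a \geq 1$) and $B^2_2 \subset B^2_p$ (automatic for $p \geq 2$), giving rigidity.

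For the second direction $B^2_\infty \times_L B^2_p \hookrightarrow (aB^2_2) \times_L B^2_2$, Theorem~\ref{thm:sympl} identifies the source with $X_{4\Omega^2_p}$, while the equivalence (Definition~\ref{def:equiv}) $(aB^2_2) \times_L B^2_2 \sim \sqrt{a}B^2_2 \times_L \sqrt{a}B^2_2 \cong X_{a\Omega_D}$ identifies the target with a rescaled concave toric domain. Theorem~\ref{thm:c1cb} gives $c_\infty(X_{4\Omega^2_p}) = 4 \cdot 2^{-1/p}$ and $c_\infty(X_{a\Omega_D}) = a \cdot c_\infty(X_{\Omega_D})$, so monotonicity of the cube capacity yields a $p$-dependent lower bound on $a$. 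To produce the matching embedding, I would use the swap form of the equivalence on the source, $B^2_\infty \times_L B^2_p \sim c U B^2_p \times_L c^{-1} U B^2_\infty$ for $c > 0$ and $U \in SO(2)$, and balance the two inclusions $c U B^2_p \subset \sqrt{a}B^2_2$ and $c^{-1} U B^2_\infty \subset \sqrt{a}B^2_2$ using the $L^2$-circumradii $2^{1/2 - 1/p}$ of $B^2_p$ and $\sqrt{2}$ of $B^2_\infty$; multiplying the two conditions forces $a \geq 2^{1-1/p}$, expected to match the capacity threshold exactly.

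The main obstacle will be verifying that the capacity thresholds coincide precisely with the equivalence-optimized inclusion thresholds for every $p \in [2, \infty]$. This hinges on the explicit values of $c_1(X_{\Omega_D})$ and $c_\infty(X_{\Omega_D})$ from \cite{vinicius} and on careful bookkeeping of the equivalence relation: choosing the correct rescaling on the target for the first direction and exploiting the swap on the source for the second direction. Only with the correct capacity values do the two bounds agree, yielding rigidity uniformly in $p \in [2, \infty]$.
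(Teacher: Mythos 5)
Your proposal follows essentially the same route as the paper's proof: identify $B^2_2\times_L B^2_2$ with the concave toric domain $X_{\Omega_0}$ (your $X_{\Omega_D}$) and the products with $B^2_\infty$ with toric domains via Theorem \ref{thm:sympl}, then use $c_1$ for one direction and $c_\infty$ for the other via Theorem \ref{thm:c1cb}, and match the resulting bounds $a\geq 1$ and $a\geq 2^{1-1/p}$ with inclusions up to equivalence. The deferred verifications indeed work out, since Theorem \ref{thm:c1cb} applied to the curve $\gamma$ gives $c_1(X_{\Omega_0})=4$ and $c_\infty(X_{\Omega_0})=2$ (the point $\gamma(\pi/2)=(2,2)$ realizes both), exactly as the paper uses.
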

Combining Theorems \ref{thm:main} and \ref{thm:balls}, we obtain the
following result.
\begin{cor}\label{cor:three}
For any $p,q,r,s\in\{1,2,\infty\}$
with
\begin{equation}
\label{eq:notin}
(p,q,r,s)\not\in\{(1,\infty,2,2),(\infty,1,2,2),(2,2,1,\infty),(2,2,\infty,1)\},
\end{equation}
the symplectic embedding problem $B^2_{p}\times_L B^2_{q}\overset{?}{\hookrightarrow} B^2_{r}\times_L B^2_{s}$ is rigid.
\end{cor}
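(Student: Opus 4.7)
The plan is to handle the $77$ tuples in $\{1,2,\infty\}^4$ remaining after removing the four excluded ones by reducing each embedding problem, case by case, to an application of either Theorem \ref{thm:main} or Theorem \ref{thm:balls}. The main tool is the equivalence relation $\sim$ of Definition \ref{def:equiv}, which produces symplectomorphisms. Two simple facts drive every reduction: $B^2_2$ is $SO(2)$-invariant, while $R_{\pi/4}B^2_1=\tfrac{1}{\sqrt{2}}B^2_\infty$. Combined with the swap $X\times_L Y\sim Y\times_L X$ and the scaling $X\times_L Y\sim aX\times_L a^{-1}Y$, a direct computation shows that every $B^2_p\times_L B^2_q$ with $(p,q)\neq(2,2)$ admits a $\sim$-representative of the form $B^2_\infty\times_L cB^2_{t}$ for some $t\in\{1,2,\infty\}$ and an explicit $c>0$, and that this index $t$ lies in $\{2,\infty\}$ unless $\{p,q\}=\{1,\infty\}$; in contrast, the $\sim$-class of $B^2_2\times_L B^2_2$ consists only of sets $bB^2_2\times_L b^{-1}B^2_2$.

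\textbf{Case A (both pairs equal $(2,2)$).} Given an embedding $B^2_2\times_L B^2_2\hookrightarrow aB^2_2\times_L B^2_2$, I would post-compose with the inclusion $aB^2_2\subset aB^2_\infty$ in the first factor to obtain $B^2_2\times_L B^2_2\hookrightarrow aB^2_\infty\times_L B^2_2$, and then invoke the rigidity of this problem from Theorem \ref{thm:balls} with $p=2$. Since any equivalent representation of the source is $bB^2_2\times_L b^{-1}B^2_2$, and any equivalent representation of $B^2_\infty\times_L B^2_2$ is a product of a (possibly rotated) scaled square and a scaled disk, comparing the two factor-wise inclusions in the equivalent inclusion forces $a\geq 1$; the inclusion $B^2_2\times_L B^2_2\subset aB^2_2\times_L B^2_2$ then holds tautologically.

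\textbf{Case B (exactly one pair equals $(2,2)$).} The first paragraph rewrites the non-$(2,2)$ pair as $B^2_\infty\times_L cB^2_t$; the observation that $t\in\{2,\infty\}$ outside the excluded tuples of \eqref{eq:notin} means that Theorem \ref{thm:balls} applies with $p=t\in[2,\infty]$. A uniform ambient rescaling together with one scaling $\sim$-move puts the problem into the exact form treated by Theorem \ref{thm:balls}, whose rigidity then produces an equivalent inclusion that reverses through the same $\sim$-moves into an equivalent inclusion for the original pair with the original scaling parameter $a$.

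\textbf{Case C (neither pair equals $(2,2)$).} Apply the first paragraph to both sides, obtaining representatives $B^2_\infty\times_L cB^2_{q'}$ for the source and $B^2_\infty\times_L c_aB^2_{s'}$ for the $a$-scaled target, with $c_a$ depending linearly on $a$. A uniform rescaling of $\R^4$ by $c^{-1/2}$ followed by a scaling $\sim$-move with parameter $c^{1/2}$ normalises the source to $B^2_\infty\times_L B^2_{q'}$ and the target to $B^2_\infty\times_L(c_a/c)B^2_{s'}$, which falls under Theorem \ref{thm:main}(iv): an embedding exists iff $B^2_{q'}\subset (c_a/c)B^2_{s'}$, equivalently iff $cB^2_{q'}\subset c_aB^2_{s'}$; reversing the $\sim$-moves returns this as an inclusion of equivalent representatives with the original $a$. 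The principal obstacle will be the careful bookkeeping of how the parameter $a$ is transported by the $\sim$-moves and the ambient rescaling, so that the equivalent inclusion produced by the invoked rigidity theorem translates correctly into one for the original problem.
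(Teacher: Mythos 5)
Your proposal is correct and follows essentially the same route as the paper: the published proof likewise uses that $B^2_1$ is a rotated, scaled copy of $B^2_\infty$ and that $B^2_2$ is $SO(2)$-invariant, and then applies the moves of Definition \ref{def:equiv} (swap and reciprocal scaling) to rewrite each admissible problem as one of the rigid problems of Theorem \ref{thm:main} or Theorem \ref{thm:balls}. Your Case A additionally makes explicit the tuple $(2,2,2,2)$, which the paper's one-line reduction leaves implicit, and your treatment of it (enlarging the target factor $B^2_2$ to $B^2_\infty$ and invoking Theorem \ref{thm:balls} with $p=2$) is sound.
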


In fact, Corollary \ref{cor:three} is optimal, in the sense that if
\eqref{eq:notin} does not hold, then $B^2_{p}\times_L
B^2_{q}\overset{?}{\hookrightarrow} B^2_{r}\times_L B^2_{s}$ is not
rigid (see Section \ref{sec:diam-disks-squar}).

\subsection{The lagrangian bidisk}\label{sec:lagrangian-bidisk}
The lagrangian bidisk $B^2_2 \times_L B^2_2$ is the only lagrangian
product that appears in Theorem \ref{thm:balls} and not in Theorem
\ref{thm:main}. While the techniques of the present paper do not allow
to identify the lagrangian bidisk with a toric domain, the first
author proved in \cite{vinicius} that $B^2_2 \times_L B^2_2$ can be
endowed with an effective Hamiltonian $\mathbb{T}^2$-action. This is
the content of the following result, stated below without proof.
\begin{thm}[{\cite[Theorem 3]{vinicius}}]\label{thm:bidisk}
  Let $\Omega_0\subset \R^n_{\ge 0}$ be the open subset of bounded by the coordinate axes and the curve parametrized by
  \[\gamma(\alpha)=2\left(\sin\alpha-\alpha\cos\alpha,\sin\alpha+(\pi-\alpha)\cos\alpha\right),\quad \alpha\in[0,\pi].\]
Then $B^2_2\times_L B^2_2$ is symplectomorphic to the toric domain $X_{\Omega_0}$.
\end{thm}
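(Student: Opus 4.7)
The plan is to construct an effective Hamiltonian $\mathbb{T}^2$-action on $B^2_2\times_L B^2_2$ whose moment map image equals $\Omega_0$, and then to conclude via the classification of symplectic toric manifolds. The bidisk is first identified with the open unit disk subbundle $\{(\qq,\pp)\in T^*D^2 : |\pp|<1\}$ of the cotangent bundle of the open unit disk $D^2\subset\R^2$, equipped with the canonical symplectic structure.

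The construction exploits the classical integrability of the billiard in $D^2$. The billiard flow (straight-line geodesic motion with specular reflection at $\partial D^2$) preserves both the angular momentum $L(\qq,\pp)=q_1p_2-q_2p_1$ and $|\pp|^2$, and these Poisson-commute with respect to $\omega_0$. The first circle action is generated by $L$ itself, which is the lift to $T^*D^2$ of the $SO(2)$-rotation on $D^2$. The second is obtained from the radial action integral
\[
J_r = \frac{1}{\pi}\int_{|L|/|\pp|}^{1}\sqrt{|\pp|^2-L^2/r^2}\,dr,
\]
which represents, up to $2\pi$, the symplectic area of the reduced space at angular momentum level $L$. A suitable $SL(2,\Z)$ change of basis on the two periodic actions generated by $L$ and $J_r$ delivers the pair of standard toric Hamiltonians $(\mu_1,\mu_2)$.

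To identify the moment map image with $\Omega_0$, evaluate these Hamiltonians on the limiting locus $|\pp|\to 1$. Substituting $\cos\alpha=L/|\pp|$, a direct computation yields $\pi J_r=\sin\alpha-\alpha\cos\alpha$ on $[0,\pi/2]$ (where $L\ge 0$) and, using the symmetry $J_r(L,|\pp|)=J_r(-L,|\pp|)$, $\pi J_r=\sin\alpha+(\pi-\alpha)\cos\alpha$ on $[\pi/2,\pi]$ (where $L\le 0$); coupled with the relation $2\pi L=2\pi\cos\alpha$, these reproduce the two components of $\gamma(\alpha)$ exactly after the basis change. Together with the obvious axial boundary strata, the moment map image equals $\Omega_0$, and Delzant's theorem produces the symplectomorphism $B^2_2\times_L B^2_2\cong X_{\Omega_0}$.

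The principal difficulty lies in establishing the smoothness of the toric $\mathbb{T}^2$-action on the full bidisk. Since $J_r$ is only a smooth function of $|L|$, the naive $(L,J_r)$-coordinates fail to be smooth across the critical strata $\{L=0\}$ (diameter trajectories) and $\{\qq=\pp=0\}$ (fixed point of rotation). Producing genuinely smooth toric Hamiltonians $(\mu_1,\mu_2)$ matched to the Delzant standard model $\boldsymbol{\mu}(z_1,z_2)=(\pi|z_1|^2,\pi|z_2|^2)$ near these strata requires a careful local analysis, and constitutes the main technical novelty of the argument in \cite{vinicius}.
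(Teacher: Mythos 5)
First, a point of reference: the paper does not prove Theorem \ref{thm:bidisk} at all --- it is quoted from \cite[Theorem 3]{vinicius} and explicitly ``stated below without proof'', so your proposal has to be measured against the argument in \cite{vinicius} (and against the analogous construction for the interval billiard in Section \ref{sec:billiard-cube}). Your boundary arithmetic is correct: with $p=|\pp|$ and $\ell=|L|$ one has $\pi J_r=\sqrt{p^2-\ell^2}-\ell\arccos(\ell/p)$, the candidate pair $\mu_{1,2}=2\pi J_r+\pi(|L|\mp L)$ traces exactly $\gamma$ as $p\to 1$, and $\mu_2-\mu_1=2\pi L$ is generated by simultaneous rotation. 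The general billiard philosophy is also the right one.

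The gap, however, is not the deferred smoothness across $\{L=0\}$; the proposed mechanism fails one step earlier. Both of your toric Hamiltonians are functions of $(|\pp|,L)$ alone, and no such functions can define a moment map of a $\mathbb{T}^2$-action on the open bidisk: a joint level set of $(|\pp|,L)$ inside $\{|\qq|<1,\ |\pp|<1\}$ is an open cylinder (a circle of momentum directions times an open chord of the position disk), not a $2$-torus, so the fibers and properness are wrong; the preimage of the origin would be the entire lagrangian disk $\{\yy=0\}$ rather than a fixed point; and nothing at all maps to the open axis segments of $\Omega_0$ (since $J_r=0$ forces $|L|=|\pp|$, impossible when $|\qq|<1$). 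Equivalently, $J_r$ does not generate a circle action on the open bidisk: since $\partial J_r/\partial |\pp|=\sqrt{|\pp|^2-L^2}/(\pi|\pp|)\neq 0$, its Hamiltonian flow drifts along the chord and exits the domain in finite time, so it is not even complete; the periodicity of the radial action is a statement about the reflected billiard phase space, and transferring it to $B^2_2\times_L B^2_2$ with $\omega_0$ is precisely what has to be proved. Consequently no $SL(2,\Z)$ (in fact piecewise-integral) recombination of $L$ and $J_r$ can deliver the toric pair; the genuine construction in \cite{vinicius} uses data depending also on the position along the chord (equivalently on $\xx\cdot\yy$), and handles the reflection locus by an exhaustion together with the symplectic isotopy extension/gluing argument of \cite{mcduff_bl} --- just as Section \ref{sec:constr-sympl} here first smooths the interval billiard by the potential $\epsilon/(1-x^2)$ so that the flows become complete and honestly periodic. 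Finally, even granting a smooth effective action, Delzant's theorem does not apply as you invoke it: both domains are open and noncompact and $\Omega_0$ is not a Delzant polytope, so one needs the Karshon--Lerman classification (Theorem \ref{thm:class_stm}), after checking connectedness of fibers and that the moment image is all of $\Omega_0$, or an explicit symplectomorphism as in \cite{vinicius}.
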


\begin{proof}[Proof of Theorem \ref{thm:balls}]
Fix $p\in[2,\infty[$. It follows from Theorem \ref{thm:bidisk} and
formulae \eqref{eq:c1} and \eqref{eq:cb} that
\begin{align}
  c_1\left(B^2_2\times_L B^2_2 \right)&= c_1(X_{\Omega_0})=4,\label{eq:c1concave}\\
  c_\infty\left(B^2_2\times_L B^2_2 \right)&= c_{\infty}(X_{\Omega_0})=2.\label{eq:cbconcave}
\end{align}
\noindent
Suppose first that $B^2_2\times_L B^2_2\hookrightarrow B^2_\infty \times_L
a B^2_p$ for some $a> 0$. 
It follows from Theorems \ref{thm:sympl} and \ref{thm:bidisk} that $X_{\Omega_0}\hookrightarrow X_{4a\cdot\Omega^2_p}$.
From \eqref{eq:c1} and \eqref{eq:c1concave} we obtain
\[4=c_1(X_{\Omega_0})\le 4a \cdot c_1(X_{\Omega^2_p})=4a.\]
So $a\ge 1$. Since $B^2_2\subset B^2_p\subset B^2_{\infty}$, it
follows that $B^2_2\times_L B^2_2 \subset B^2_\infty\times_L
B^2_p$. Therefore $B^2_2\times_L B^2_2 \overset{?}{\hookrightarrow}
B^2_\infty\times_L B^2_p$ is rigid.

Suppose that $B^2_\infty \times_L B^2_p\hookrightarrow a B^2_2\times_L
B^2_2$ for some $a >0$, so that $X_{4\cdot \Omega^2_p}\hookrightarrow X_{a\cdot\Omega_0}$. From \eqref{eq:cb} and \eqref{eq:cbconcave} we obtain
\[\frac{4}{2^{1/p}}=c_{\infty}(X_{\Omega^2_p})\le a\cdot c_{\infty}(X_{\Omega_0})=2a.\]
So $a\ge \frac{2}{2^{1/p}}$. It follows from a simple calculation that \[B^2_p\subset \frac{2^{1/2}}{2^{1/p}}B^2_2\quad\text{and}\quad B^2_\infty\subset 2^{1/2} B^2_2.\] Hence
\[B^2_\infty \times_L B^2_p\subset \left(\frac{2^{1/2}}{2^{1/p}}B^2_2\right)\times_L \left(2^{1/2} B^2_2\right)\sim \frac{2}{2^{1/p}}B^2_2\times_L\times B^2_2.\]
Therefore $B^2_p\times_L B^2_\infty\overset{?}{\hookrightarrow}  B^2_2\times_L B^2_2$ is rigid.

The case $p = \infty$ can be dealt with analogously by substituting
$\Omega^2_p$ by $\Omega^2_\infty$ and $1/p$ by $0$ in the above calculations.
\end{proof}

\subsection{Diamonds, disks and squares}\label{sec:diam-disks-squar}
The aim of this section is to prove Corollary \ref{cor:three} and
explain why it is optimal.
\begin{proof}[Proof of Corollary \ref{cor:three}]
Begin by observing that there exist $a > 0$ and $U\in SO(2)$ such that
$B^2_1= aU\cdot B^2_\infty$, and that $B^2_2$ is $SO(2)$-invariant.
So
\begin{equation}\label{eq:sim1}
B^2_2\times_L B^2_1\sim B^2_\infty \times_L a B^2_2\sim a B^2_\infty\times_L B^2_2\sim B^2_1\times_L B^2_2,\end{equation}
Moreover
\begin{equation}\label{eq:sim2}
B^2_1\times_L B^2_1\sim a^2 B^2_\infty\times_L B^2_\infty\quad\text{and}\quad B^2_\infty \times_L B^2_1\sim B^2_\infty\times_L B^2_1.\end{equation}

Fix $p,q,r,s\in\{1,2,\infty\}$ satisfying \eqref{eq:notin}. It follows from \eqref{eq:sim1} and \eqref{eq:sim2} that the $B^2_p\times_L B^2_q\overset{?}{\hookrightarrow}B^2_r\times_L B^2_s$ is equivalent to one of the embedding problems considered either in Theorem \ref{thm:main} or in Theorem \ref{thm:balls}.
\end{proof}

It remains to show that the symplectic embedding problems
\[B^2_\infty \times_L B^2_1\overset{?}{\hookrightarrow} B^2_2\times_L B^2_2\quad\text{and}\quad B^2_2\times_L B^2_2\overset{?}{\hookrightarrow} B^2_\infty \times_L B^2_1\]
are not rigid. It follows from a simple calculation that $4\Omega^2_1\subset \Omega_0$. So $B^2_\infty \times_L B^2_1\hookrightarrow B^2_2\times_L B^2_2$. However, if $B^2_1\subset a B^2_2$ and $B^2_\infty\subset b B^2_2$, then $a\ge 1$ and $b\ge\sqrt{2}$. So $ab\ge \sqrt{2}>1$. Therefore the embedding problem $B^2_\infty \times_L B^2_1\overset{?}{\hookrightarrow} B^2_2\times_L B^2_2$ is not rigid.

On the other hand, it is shown in \cite{vinicius} that $X_{\Omega_0}\hookrightarrow X_{3\sqrt{3}\cdot \Omega^2_1}$. So $B^2_2\times_L B^2_2\hookrightarrow B^2_\infty \times_L \frac{3\sqrt{3}}{4} B^2_1$. However, if $B^2_2\subset a B^2_1$ and $B^2_2\subset b B^2_\infty$, then $a\ge\sqrt{2}$ and $b\ge 1$ and hence \[ab\ge\sqrt{2}>\frac{3\sqrt{3}}{4}.\]
Therefore the embedding problem $B^2_2\times_L B^2_2\hookrightarrow B^2_1\times_L B^2_\infty$ is not rigid.

\begin{rmk}
 We could say that an embedding problem of toric domains $X_{\Omega}\overset{?}{\hookrightarrow}X_{\Omega'}$ is rigid if
 \[X_{\Omega}\hookrightarrow X_{a\cdot\Omega'}\Rightarrow \Omega\subset a\cdot\Omega'.\]
Based on the calculations above, $B^2_\infty \times_L B^2_1\overset{?}{\hookrightarrow} B^2_2\times_L B^2_2$ is not rigid as an embedding problem of lagrangian products, but it is rigid as an embedding problem of toric domains. However, $B^2_2\times_L B^2_2\overset{?}{\hookrightarrow} B^2_\infty\times_L B^2_1$ is not rigid in either sense.
\end{rmk}

\section{From balanced regions to toric domains}\label{sec:billiard-cube}
The aim of this section is to prove Theorem \ref{thm:sympl}, thus
completing the proof of the main result of the paper, Theorem
\ref{thm:main}. Our strategy to prove Theorem \ref{thm:sympl} is
inspired by some of the arguments in \cite[Section 2]{vinicius} and
can be broken down in the following three steps:
\begin{enumerate}[label = (\roman*), ref= (\roman*), leftmargin=*]
\item \label{item:15} For any $n \geq 1$ and any $\epsilon > 0$, we
  define an integrable system $\Phi_{\epsilon}: B^n_{\infty}
  \times_L \R^n \to \R^n$ related to $n$ uncoupled billiards in the
  interval. We prove that, for any $\epsilon >0$, $\Phi_{\epsilon}: B^n_{\infty}
  \times_L \R^n \to \R^n$ is isomorphic to $\boldsymbol{\mu} : \R^{2n}
  \to \R^n$, the integrable system obtained by considering (one of) the moment
  map(s) of the standard Hamiltonian $\mathbb{T}^n$-action on $\R^{2n}$ (see Section
  \ref{sec:tor}). If
  $\left(\boldsymbol{\Psi}_{\epsilon},\mathbf{I}_{\epsilon}\right)$
  denotes the above isomorphism for a fixed $\epsilon$, we also show
  that, in some sense, the maps $\mathbf{I}_{\epsilon}$ possess a limit as $\epsilon
  \to 0$, which we denote by $\mathbf{I}_0$. 
\item \label{item:16} Fix $n \geq 1$ and a balanced region $A \subset \R^n$. Using
  the family of isomorphisms of integrable systems
  $\left(\boldsymbol{\Psi}_{\epsilon},\mathbf{I}_{\epsilon}\right)$
  and the map $\mathbf{I}_0$ of \ref{item:15}, we construct a family of nested
  symplectic submanifolds
  of $B^n_{\infty}
  \times_L A$ parametrized by $\epsilon$ whose images under
  $\boldsymbol{\Psi}_{\epsilon}$ are a nested family of symplectic
  submanifolds exhausting $X_{4|A|}$ (see Lemma
  \ref{lemma:family} for a precise statement).
\item \label{item:17} Using the symplectic isotopy extension theorem
  (cf. \cite[Proposition 4]{auroux} and \cite{banyaga}) and the compact
  exhaustions of \ref{item:16}, we construct the desired
  symplectomorphism between $B^n_\infty \times_L A$ and $X_{4|A|}$.
\end{enumerate}

The structure of this section is as follows. Section
\ref{sec:family-integr-syst} constructs the desired family of integrable
systems on $B^n_{\infty} \times_L \R^n$, while Section
\ref{sec:constr-sympl} deals with steps \ref{item:16} and
\ref{item:17}, thus completing the proof of Theorem \ref{thm:sympl}
and, hence, of the main result, Theorem \ref{thm:main}.

\subsection{A family of integrable systems on $B^n_{\infty}
  \times_L \R^n$}\label{sec:family-integr-syst}

\subsubsection{The category of integrable systems}\label{sec:categ-integr-syst}
Before constructing the desired family of integrable systems on $B^n
\times_L \R^n$, we recall some basic notions pertaining to integrable systems that are used throughout the
paper. 

\begin{definition}\label{defn:is}
  An {\em integrable
    system} on a $2n$-dimensional symplectic manifold
  $\left(M,\omega\right)$ is a smooth map
  $$ \Phi:= \left(h_1,\ldots,h_n\right) : \left(M,\omega\right) \to
  \R^n$$
  \noindent
  satisfying the following conditions
  \begin{itemize}[leftmargin=*]
  \item for all $i,j = 1,\ldots, n$, $\{h_i,h_j\}=0$, where
    $\left\{\cdot,\cdot\right\}$ is the Poisson bracket on
    $C^{\infty}(M)$ induced by
    $\omega$, and
  \item the map $\Phi$ is a submersion on a dense subset of $M$.
  \end{itemize}
\end{definition}

\begin{example}\label{exm:is}
  For the purposes of this paper, the following are important examples
  of integrable systems:
  \begin{enumerate}[label = (\alph*), ref = (\alph*), leftmargin=*]
  \item \label{item:18} If $n=1$, an integrable system on a surface
    $\left(M,\omega\right)$ is a function $H \in
    C^{\infty}\left(M\right)$ whose differential does not vanish on a
    dense subset.
  \item \label{item:19} For $i=1,2$, let $\Phi_i : \left(M_i,\omega_i\right) \to
    \R^{n_i}$ be an integrable system. Then the map $\Phi :
    =\left(\Phi_1,\Phi_2\right) : \left(M_1 \times M_2,\omega_1 \oplus
      \omega_2\right) \to \R^{n_1 + n_2}$ is an integrable system,
    where $\omega_1 \oplus \omega_2 = \mathrm{pr}_1^*
    \omega_1 + \mathrm{pr}^*_2\omega_2$ and, for $i=1,2$, $\mathrm{pr}
    : M_1 \times M_2 \to M_i$ denotes the canonical projection.
  \item \label{item:20} A {\em symplectic toric manifold} is a triple
    $\left(M,\omega,\mu\right)$, where $\left(M,\omega\right)$ is a
    $2n$-dimensional symplectic manifold and $\mu$ is the moment map
    of an effective Hamiltonian $\mathbb{T}^n$-action on
    $\left(M,\omega\right)$. Given a symplectic toric manifold
    $\left(M,\omega\right)$ and identifying the dual of the Lie algebra
    of $\mathbb{T}^n$ with $\R^n$, the map $\mu :
    \left(M,\omega\right) \to \R^n$ defines an integrable system. (The
    fact that $\mu$ is a submersion on a dense set follows from the
    Marle-Guillemin-Sternberg local normal form for effective
    Hamiltonian actions, cf. \cite{guil_stern,marle}.) In particular,
    the following maps define integrable systems:
    \begin{itemize}[leftmargin=*]
    \item the moment map $\boldsymbol{\mu} : \R^{2n} \to \R^n$ of the
      standard Hamiltonian $\mathbb{T}^n$-action on $\R^{2n}$, and
    \item the moment map of the cotangent lift of the
      $\mathbb{T}^n$-action on $\mathbb{T}^n$ by left (or right)
      multiplication. Using the canonical trivialization $\cotan
      \mathbb{T}^n \cong \R^n \times \mathbb{T}^n$ so that the
      canonical symplectic form becomes $\sum\limits_{i=1}^n d a_i
      \wedge d\theta_i$, this moment map becomes the projection
      $\mathrm{pr}_1 : \left(\R^n \times \mathbb{T}^n,  \sum\limits_{i=1}^n d a_i
      \wedge d\theta_i\right) \to \R^n$ onto the
      first component.
    \end{itemize}
  \end{enumerate}
\end{example}

An important role in this paper is played by the following notion of
equivalence of integrable systems.

\begin{definition}\label{defn:im_is}
  Two integrable systems $\Phi_1 : \left(M_1,\omega_1\right) \to \R^{n_1}$
  and $\Phi_2 : \left(M_2,\omega_2\right) \to \R^{n_2}$ are {\em
    isomorphic} if there exists a pair $\left(\Psi, g\right)$
  consisting of a symplectomorphism
  $\Psi : \left(M_1,\omega_1\right) \to \left(M_2,\omega_2\right)$ and
  a diffeomorphism\footnote{A map $g : C \subset \R^{n_1}\to
    \R^{n_2}$ between is said to be {\em smooth} if there exists an
    open set $V$ containing $C$ and a smooth map $\tilde{g} : U \to
    \R^{n_2}$ that extends $g$.} $g :\Phi_1\left(M_1\right) \to
  \Phi_2\left(M_2\right)$ such that $\Phi_2 \circ \Psi = g \circ \Phi_1$.
\end{definition}

\begin{rmk}\label{rmk:im_prod}
  The above notion of isomorphism of integrable systems behaves well
  with respect to the product construction \ref{item:19} of Example
  \ref{exm:is}. More precisely, if, for $i=1,2$,
  $\left(\Psi_i,g_i\right)$ is an isomorphism between $\Phi_i :
  \left(M_i,\omega_i\right) \to \R^{n_i}$ and $\Phi'_i :
  \left(M'_i,\omega'_i\right) \to \R^{n'_i}$, then the pair
  $\left(\Psi_1 \times \Psi_2, g_1 \times g_2\right)$ is an
  isomorphism between $\Phi = \left(\Phi_1,\Phi_2\right) : \left(M_1 \times M_2,\omega_1 \oplus
    \omega_2\right) \to \R^{n_1 + n_2}$ and $\Phi' = \left(\Phi'_1,\Phi'_2\right) : \left(M'_1 \times M'_2,\omega'_1 \oplus
    \omega'_2\right) \to \R^{n'_1 + n'_2}$.
\end{rmk}

Another construction that is relevant for our purposes is that of
restricting integrable systems to suitable subsets.

\begin{definition}\label{defn:subsys}
  Given an integrable system $\Phi : \left(M,\omega\right) \to \R^n$
  and an open subset $U \subset M$, the {\em subsystem relative to
    $U$} is the integrable system $\Phi|_U : \left(U,\omega|_U\right)
  \to \R^n$.
\end{definition}

For any $n \geq 1$, the family of integrable systems $\Phi_{\epsilon}
: B^n_{\infty}
\times_L \R^n \to \R^n$ that we are interested in is going to be constructed
using the product construction \ref{item:19} of Example
\ref{exm:is}, since $B^n_{\infty} \times_L \R^n$ is symplectomorphic to
the (symplectic) product of $n$ copies of $B^1_{\infty} \times_L
\R$. Thus, firstly we define the relevant family of integrable systems
and prove all desired properties in the case $n=1$ (see Section
\ref{sec:billiard-interval}), and, secondly, we consider the general
case (see Section \ref{sec:general-case}).

\subsubsection{The one dimensional case}\label{sec:billiard-interval}
For any $\epsilon > 0$, consider the integrable system
$H_{\epsilon}: B^1_\infty \times_L \R \to \R$, where $H_{\epsilon}(x,y) = \frac{1}{2}\left(y^2 + \epsilon
  \frac{1}{1-x^2}\right)$ and let $x,y$ denote canonical coordinates on
$\R^2$.

\begin{rmk}\label{rmk:relation_billiard}
  The family of integrable systems $\left\{H_{\epsilon}: B^1_\infty
    \times_L \R \to \R\right\}_{\epsilon > 0}$ is related to the
  dynamics of the billiard in the interval
  $\left[-1,+1\right]$ as follows. Firstly, observe that $
  B^1_{\infty} \times_L \R$ is symplectomorphic
  to $\left(\cotan B^1_{\infty},\omega_{\mathrm{can}}\right)$. Secondly,
  the potential $V(x)=\frac{1}{2\left(1-x^2\right)}$ satisfies the properties to fit in the approximation scheme first introduced
  in \cite{benci_gian} that allow us to study the billiard in the
  interval $\left[-1,+1\right]$ as a limit of Hamiltonian systems on
  the cotangent bundle of $B^1_{\infty}$.
\end{rmk}

The
following result, stated below without proof,
establishes some basic properties of $H_{\epsilon}:B^1_{\infty}
\times_L \R \to \R$ for any $\epsilon
> 0$.

\begin{prop}\label{prop:basic_is}
  For any $\epsilon > 0$,
  \begin{enumerate}[label = (\arabic*), ref = (\arabic*), leftmargin=*]
  \item \label{item:1} the image of $H_{\epsilon}$ equals
    $\left[\frac{\epsilon}{2},+\infty \right[$;
  \item \label{item:2} the only singular point of $H_{\epsilon}$ is $(0,0)$, which
    equals the fiber
    $H^{-1}_{\epsilon}\left(\frac{\epsilon}{2}\right)$;
  \item \label{item:4} the Hessian of $H_{\epsilon}$ at $(0,0)$ is
    positive definite;
  \item \label{item:3} the map $H_{\epsilon} : B^1_{\infty} \times_L \R \to \R$ is proper.
  \end{enumerate}
\end{prop}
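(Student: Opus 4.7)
The plan is to establish (1)--(4) by direct analysis of $H_\epsilon$, viewed as the sum of the kinetic term $\tfrac{1}{2}y^2$ and the potential $V_\epsilon(x) = \tfrac{\epsilon}{2(1-x^2)}$ on the open interval $(-1,1)$. The potential $V_\epsilon$ is smooth, even, attains its minimum value $\epsilon/2$ at $x=0$, and diverges to $+\infty$ as $x \to \pm 1$; this simple shape will do essentially all of the work.

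First, for (1) and (2), I would observe that $\tfrac{1}{2}y^2 \geq 0$ together with $V_\epsilon(x) \geq \epsilon/2$ immediately gives $H_\epsilon \geq \epsilon/2$, with equality if and only if $(x,y)=(0,0)$, while surjectivity onto $[\epsilon/2,\infty)$ follows by fixing $x=0$ and letting $y$ range over $\R$. The gradient
\[
\nabla H_\epsilon(x,y) = \left(\frac{\epsilon\, x}{(1-x^2)^2},\ y\right)
\]
vanishes only at the origin, which, combined with the previous bound, identifies it both as the unique critical point and as the entire fiber $H_\epsilon^{-1}(\epsilon/2)$. Claim (3) is then a one-line computation: the mixed partial vanishes identically and the pure second derivatives at the origin are $\epsilon$ and $1$, giving the positive definite Hessian $\mathrm{diag}(\epsilon,1)$.

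The only step requiring genuine care is (4), since $B^1_\infty \times_L \R = (-1,1) \times \R$ is non-compact in two ways: $x$ can approach $\pm 1$ and $|y|$ can tend to infinity. I would argue as follows: given a compact $K \subset [\epsilon/2,\infty)$, pick $M$ with $K \subset [\epsilon/2, M]$; then on $H_\epsilon^{-1}([\epsilon/2, M])$ the inequalities $y^2 \leq 2M$ and $\epsilon/(1-x^2) \leq 2M$ force
\[
|y| \leq \sqrt{2M} \quad \text{and} \quad x^2 \leq 1 - \frac{\epsilon}{2M} < 1,
\]
so the preimage lies inside a compact subset of $(-1,1)\times\R$; being closed by continuity of $H_\epsilon$, it is compact. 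The key point to emphasize is that the singular potential $V_\epsilon$ confines sublevel sets uniformly away from the boundary of the interval, and this is exactly the mechanism that makes $H_\epsilon$ proper. Apart from this, the proposition is essentially a bookkeeping exercise, and I would expect no serious obstacle beyond writing down the properness estimate cleanly.
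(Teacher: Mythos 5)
Your proposal is correct, and in fact the paper states this proposition without proof, so your elementary verification is exactly the routine argument the authors implicitly intend: the inequality $H_\epsilon \ge \epsilon/2$ with equality only at the origin, the gradient $\left(\frac{\epsilon x}{(1-x^2)^2},\, y\right)$ vanishing only there, the Hessian $\mathrm{diag}(\epsilon,1)$, and the sublevel-set confinement $|y|\le\sqrt{2M}$, $x^2\le 1-\frac{\epsilon}{2M}$ for properness all check out. No gaps.
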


\begin{rmk}
  Property \ref{item:4} is equivalent to stating that, for any
  $\epsilon > 0$, the point $(0,0)$ is a {\em non-degenerate} singular
  point of {\em elliptic type}
  for the integrable system $H_{\epsilon}: B^1_{\infty} \times_L \R \to \R$,
  cf. \cite[Introduction]{eliasson} and \cite[Section I.3]{duf_mol}
  for more details.
\end{rmk}

An immediate consequence of Proposition \ref{prop:basic_is} is the
following simple, yet useful, result.

\begin{cor}\label{cor:compact_connected}
  For any $\epsilon > 0$, the fibers of $H_{\epsilon}$ are compact and connected.
\end{cor}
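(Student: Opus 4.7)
My plan is to derive both conclusions directly from Proposition \ref{prop:basic_is}, treating compactness and connectedness separately.

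Compactness is essentially free: by property \ref{item:3} of Proposition \ref{prop:basic_is}, the map $H_{\epsilon}$ is proper, so the preimage of any singleton in $\R$ is compact. For connectedness I would split on the fiber value. The fiber over the minimum $\epsilon/2$ equals $\{(0,0)\}$ by property \ref{item:2}, so it is trivially connected. For any $c>\epsilon/2$, property \ref{item:2} also tells us that there are no critical points in $H_{\epsilon}^{-1}(c)$, so this level set is a regular level set of a smooth function on a $2$-dimensional manifold, hence a smooth $1$-submanifold of $B^1_\infty \times_L \R$. Combined with the compactness already established, each such fiber is a finite disjoint union of circles, and the task reduces to showing there is only one.

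To count components I would parametrize the fiber explicitly. Solving $y^2 = 2c - \epsilon/(1-x^2)$ shows that $(x,y) \in H_{\epsilon}^{-1}(c)$ if and only if $x^2 \leq 1 - \epsilon/(2c) =: x_c^2$, i.e. $x \in [-x_c,x_c]$. On the open interval $(-x_c,x_c)$ the fiber splits into two smooth graphs $y = \pm\sqrt{2c - \epsilon/(1-x^2)}$, and these two branches meet precisely at the endpoints $x = \pm x_c$, where $y=0$. This traces out a single closed loop, establishing connectedness.

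The only genuine content is the last counting step — ruling out additional components — and the explicit parametrization above handles this transparently. As a sanity check (and alternative), one could instead invoke the fact that the potential $V(x) = \epsilon/(2(1-x^2))$ is convex on $(-1,1)$ with a unique minimum at $x=0$ and blows up at $|x|=1$, so every sublevel set $\{V \leq c\}$ is a single interval and the corresponding energy level of the one-degree-of-freedom system $\tfrac{1}{2}y^2 + V(x) = c$ is automatically a single periodic orbit; this reproduces the same conclusion without an explicit formula.
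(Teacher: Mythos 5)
Your proposal is correct, and both of your conclusions match the paper's: compactness is obtained exactly as in the paper, from properness of $H_{\epsilon}$ (Property (4) of Proposition \ref{prop:basic_is}), and the singular fiber over $\frac{\epsilon}{2}$ is handled by Property (2). Where you genuinely diverge is the connectedness of the regular fibers. The paper argues abstractly: the restriction of $H_{\epsilon}$ to $\left(B^1_{\infty}\times_L\R\right)\smallsetminus\{(0,0)\}$ is a proper surjective submersion onto $\left]\frac{\epsilon}{2},+\infty\right[$, hence a locally trivial fiber bundle (Ehresmann), and since the base is simply connected and the total space is connected, the long exact sequence in homotopy forces each fiber to be connected. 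You instead solve $H_{\epsilon}(x,y)=c$ explicitly, writing the fiber as the two graphs $y=\pm\sqrt{2c-\epsilon/(1-x^2)}$ over $[-x_c,x_c]$ with $x_c=\sqrt{1-\epsilon/(2c)}<1$, glued at the endpoints where $y=0$; this exhibits the fiber as a single closed curve (your alternative argument via monotonicity of the potential on each side of $0$ is an equally valid repackaging). Your route is more elementary and self-contained, and it has the side benefit of producing the explicit description of the regular fibers as circles, which the paper only records after the corollary and which underlies the action formula \eqref{eq:1}; the paper's route avoids coordinates entirely and is the kind of argument that survives when no explicit solution of $H_{\epsilon}=c$ is available. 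Either way the proof is complete.
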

\begin{proof}
  Fix $\epsilon > 0$. Property \ref{item:3} implies that the fibers of
  $H_{\epsilon}$ are compact. Using property \ref{item:2}, it remains
  to check that the fiber $H^{-1}_{\epsilon}(c)$, for $c \in \left]
    \frac{\epsilon}{2}, + \infty \right[$, is connected. To this end,
  consider the restriction of $H_{\epsilon}$ to $\left(B^1_{\infty} \times_L \R\right)
  \smallsetminus \{(0,0)\}$ as a map onto $\left]
    \frac{\epsilon}{2}, + \infty \right[$. This is a proper surjective
  submersion
  by properties \ref{item:2} and \ref{item:3}; thus it is a locally
  trivial fiber bundle. Since its codomain is simply connected and its
  domain is connected, the long exact sequence in homotopy for the
  above restriction implies
  that, for all $c \in \left]
    \frac{\epsilon}{2}, + \infty \right[$, $H^{-1}_{\epsilon}(c)$ is
  connected, as desired.
\end{proof}

Proposition \ref{prop:basic_is} and Corollary
\ref{cor:compact_connected} provide a complete topological description
of the map $H_{\epsilon}$ for any $\epsilon > 0$: for any $c \in \left]
  \frac{\epsilon}{2}, + \infty \right[$, the fiber
$H^{-1}_{\epsilon}(c)$ is regular and diffeomorphic to $S^1$, while
$H^{-1}_{\epsilon}\left(\frac{\epsilon}{2}\right)$ is a point. \\

Next we study the symplectic geometry of
$H_{\epsilon}: B^1_{\infty} \times_L \R \to \R$ for a fixed $\epsilon >
0$. Since the regular fibers of $H_{\epsilon}$ are compact
and connected, the Liouville-Arnol'd theorem
ensures the existence of {\em local action-angle variables}
(cf. \cite[Section 50]{arnold}, \cite{duistermaat}, \cite[Section
44]{gui_ster_st} for details in general). For the
case at hand, this can be phrased as follows. By Properties \ref{item:1} and
\ref{item:2} in Proposition \ref{prop:basic_is}, the intersection of
the set of regular
values of $H_{\epsilon}$ with $H_{\epsilon}\left(B^1_{\infty} \times \R\right)$ equals
$\left]\frac{\epsilon}{2},+\infty \right[$. Then the Liouville-Arnol'd
theorem applied to $H_{\epsilon} : B^1_{\infty} \times \R \to \R$
yields the following result, stated below without proof.

\begin{lemma}\label{lemma:loc_aa}
  For any $\epsilon > 0$ and for any $c_0 \in
  \left]\frac{\epsilon}{2},+\infty \right[$, there exist an open
  neighborhood $U \subset \left]\frac{\epsilon}{2},+\infty \right[$ of
  $c_0$, a local diffeomorphism $I^U_{\epsilon} : U \to \R$, and a
  symplectomorphism $ \Psi^U_{\epsilon} : \left(H^{-1}_{\epsilon}(U),d x \wedge dy\right) \to
  \left(I^U_{\epsilon}(U) \times S^1, d a \wedge d\theta\right)$ such
  that $\left(\Psi^U_{\epsilon}, I^U_{\epsilon}\right)$ is an isomorphism
  between the subsystems of $H_{\epsilon} : B^1_\infty \times_L \R \to
  \R$ and $\mathrm{pr}_1 : \left(\R \times S^1, da \wedge
    d\theta\right) \to \R$ relative to
  $H^{-1}_{\epsilon}\left(U\right)$ and $\mathrm{pr}^{-1}\left(I_{\epsilon}\left(U\right)\right)$ respectively.
\end{lemma}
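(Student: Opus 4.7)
The plan is to invoke the classical Liouville-Arnol'd theorem, whose hypotheses have already been collected in the preceding propositions. Specifically, by Proposition \ref{prop:basic_is}\ref{item:3} the map $H_\epsilon$ is proper, and by Corollary \ref{cor:compact_connected} each fiber is compact and connected; combined with Proposition \ref{prop:basic_is}\ref{item:2}, the restriction of $H_\epsilon$ to the preimage of $\,]\epsilon/2,+\infty[\,$ is a proper submersion with compact connected fibers. These are exactly the assumptions needed to produce local action-angle coordinates on a saturated neighborhood of any regular fiber, in the form asserted by the lemma, so the result can be read off from the standard references cited in the paper, namely \cite[Section 50]{arnold}, \cite{duistermaat}, and \cite[Section 44]{gui_ster_st}.

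If one wishes to make the construction explicit around a regular value $c_0$, I would proceed in three steps. First, I would choose a small open interval $U \ni c_0$ on which $H_\epsilon : H_\epsilon^{-1}(U) \to U$ is a trivialisable $S^1$-bundle; by Ehresmann's theorem together with the connectedness of the fibers, this holds after possibly shrinking $U$. Second, I would set
\[
I^U_\epsilon(c) \;=\; \frac{1}{2\pi}\oint_{H_\epsilon^{-1}(c)} y\, dx,
\]
where the fiber carries any fixed orientation; strict monotonicity of the enclosed planar area as $c$ varies makes $I^U_\epsilon$ a local diffeomorphism. Third, I would pick a smooth local section of the bundle over $U$ (for instance the portion of $\{x=0,\ y>0\}$ lying in $H_\epsilon^{-1}(U)$, which is transverse to every fiber near $H_\epsilon^{-1}(c_0)$), and define the angle $\theta$ by flowing along the Hamiltonian vector field of $I^U_\epsilon \circ H_\epsilon$; the normalisation of $I^U_\epsilon$ by $1/(2\pi)$ guarantees that this flow has the same period on every fiber, so $\theta$ is well defined modulo that period. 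Setting $\Psi^U_\epsilon(p) := (I^U_\epsilon(H_\epsilon(p)), \theta(p))$ then yields a diffeomorphism onto $I^U_\epsilon(U) \times S^1$ intertwining $H_\epsilon$ with $\mathrm{pr}_1$ via $I^U_\epsilon$, as required.

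The remaining content of the action-angle theorem, namely the identity $(\Psi^U_\epsilon)^*(da \wedge d\theta) = dx \wedge dy$, is forced by the Liouville-type computation that the period integral is the generating function of the canonical transformation and by the fact that the fibers of $H_\epsilon$ are Lagrangian (which is automatic in dimension two for any submersion). Since everything is done over $U$, no obstruction arises from monodromy of the torus bundle or from the non-existence of a global section, and I therefore do not anticipate any serious difficulty in the argument; the only real work is the verification of the hypotheses of the Liouville-Arnol'd theorem, which has already been done.
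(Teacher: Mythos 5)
Your proposal is correct and follows essentially the same route as the paper: the paper states this lemma without proof as a direct application of the Liouville--Arnol'd theorem, after having verified exactly the hypotheses you cite (properness, compactness and connectedness of fibers, and the description of the regular values), and your explicit action-angle sketch is just the standard construction behind that theorem. The only caveat is your normalization $I^U_\epsilon(c)=\frac{1}{2\pi}\oint y\,dx$, which presumes $S^1\cong\R/2\pi\Z$, whereas the paper identifies $S^1$ with $\R/\Z$ and accordingly drops the factor $\frac{1}{2\pi}$ (see the footnote to \eqref{eq:2}); this is a harmless convention mismatch, not a gap.
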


\begin{rmk}\label{rmk:loc_act}
  The smooth map $I^U_{\epsilon}$ of Lemma \ref{lemma:loc_aa} is referred to as a
  {\em local action} near $c_0$; an explicit, well-known formula for
  $I^U_{\epsilon}$ is given by
  \begin{equation}
    \label{eq:2}
    I^U_{\epsilon}(c)= \oint_{H_{\epsilon}^{-1}(c)}
    y(x,c) d x,
  \end{equation}
  \noindent
  where $y(x,c)$ is the smooth function defined implicitly by the
  equation $H_{\epsilon}(x,y) = c$ (cf. \cite[Section
  50]{arnold})\footnote{Equation \eqref{eq:2} differs by the standard formula for local
  actions by a factor of $2\pi$ (cf. \cite[Section 50]{arnold}). This
  is due to the fact that, in
  this paper, we identify $S^1$ with $\R/\Z$ while it is customary in
  the literature to use the identification $S^1 \cong \R/2\pi\Z$.}. Moreover, the map $I^U_{\epsilon} \circ H_{\epsilon} :
\left(H_{\epsilon}^{-1}(U), d x \wedge d y\right) \to \R$ is the moment
map of an effective Hamiltonian $S^1$-action. This is because $\left(\psi^U_{\epsilon}, I^U_{\epsilon}\right)$
is an isomorphism of integrable systems and $\left(I^U_{\epsilon}\left(U\right)
  \times S^1, da \wedge d\theta, \mathrm{pr}_1\right)$ is a
symplectic toric manifold (see Example \ref{exm:is}\ref{item:20}).
\end{rmk}

{\it A priori}, Lemma \ref{lemma:loc_aa} only holds locally, {\it
  i.e.} in a neighborhood of any given regular value. In general,
there are well-known topological obstructions to gluing these local
isomorphisms (cf. \cite{duistermaat}). However, in the
case at hand the situation is particularly simple.

\begin{cor}\label{cor:global}
  For any $\epsilon > 0$, there exist a smooth map $I_{\epsilon} :
  \left] \frac{\epsilon}{2}, + \infty \right[ \to \R$ which is a
  diffeomorphism onto its image, and a symplectomorphism
  $\Psi_{\epsilon} : \left(B^1_{\infty} \times_L \R\right) \smallsetminus \left\{(0,0)\right\}
  \to \left(I_{\epsilon}\left(\left] \frac{\epsilon}{2}, + \infty
      \right[ \right) \times S^1, da \wedge d\theta\right)$ such that
  $\left(\Psi_{\epsilon}, I_{\epsilon}\right)$ is an isomorphism
  between the subsystems of $H_{\epsilon} : B^1_\infty \times_L \R \to
  \R$ and $\mathrm{pr}_1 : \left(\R \times S^1, da \wedge
    d\theta\right) \to \R$ relative to
  $H^{-1}_{\epsilon}\left(\left] \frac{\epsilon}{2}, + \infty
    \right[\right)$ and $\mathrm{pr}_1^{-1}\left(I_{\epsilon}\left(\left] \frac{\epsilon}{2},
      + \infty \right[\right)\right)$ respectively.
  In particular, the map $I_{\epsilon} \circ
  H_{\epsilon}: \left(B^1_{\infty} \times_L \R\right) \smallsetminus \left\{(0,0)\right\} \to
  \R$ is the moment map of an effective Hamiltonian $S^1$-action.
\end{cor}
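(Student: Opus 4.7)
The set $]\epsilon/2,+\infty[$ of regular values of $H_\epsilon$ is connected, simply connected, and 1-dimensional; in higher-dimensional Liouville--Arnol'd theory the two obstructions to globalizing the local charts of Lemma \ref{lemma:loc_aa} are monodromy of the period lattice and a Chern-class obstruction to trivializing the torus bundle of regular fibers, and both are automatically trivial in our contractible 1-dimensional setting. I would therefore glue the local action-angle coordinates into a single pair $(\Psi_\epsilon, I_\epsilon)$ by an explicit construction, which will also be convenient in Section \ref{sec:general-case}.

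For the action I would set
\[
I_\epsilon(c)\ :=\ \int_{\{H_\epsilon\le c\}} dx\wedge dy,
\]
which is finite by properness of $H_\epsilon$ (Proposition \ref{prop:basic_is}\ref{item:3}) and vanishes at $c=\epsilon/2$ by Proposition \ref{prop:basic_is}\ref{item:2}. A standard application of Stokes' theorem together with the explicit formula \eqref{eq:2} shows $I_\epsilon$ agrees locally, up to an additive constant, with every local action $I_\epsilon^U$ from Lemma \ref{lemma:loc_aa}; hence $I_\epsilon$ is smooth, and the classical identity $I_\epsilon'(c)=T(c)>0$, where $T(c)$ denotes the period of the Hamiltonian flow of $H_\epsilon$ on $H_\epsilon^{-1}(c)$, shows that $I_\epsilon$ is a diffeomorphism onto an open interval.

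For the symplectomorphism, I would note that the half-line $\Sigma:=\{(x,0):0<x<1\}$ meets every regular fiber transversely in exactly one point, because $x\mapsto H_\epsilon(x,0)=\epsilon/(2(1-x^2))$ is a diffeomorphism of $[0,1[$ onto $[\epsilon/2,+\infty[$. The Hamiltonian vector field of $I_\epsilon\circ H_\epsilon$ is complete on the regular set (fibers of $H_\epsilon$ are compact by Corollary \ref{cor:compact_connected}) and, by design of the action variable, generates a free $S^1=\R/\Z$-action there. Setting $\Psi_\epsilon(p):=\bigl(I_\epsilon(H_\epsilon(p)),\theta(p)\bigr)$, where $\theta(p)\in\R/\Z$ is the flow-time from the unique point of $\Sigma\cap H_\epsilon^{-1}(H_\epsilon(p))$ to $p$, I get a bijection for which $\mathrm{pr}_1\circ\Psi_\epsilon=I_\epsilon\circ H_\epsilon$ tautologically, and $\Psi_\epsilon^*(da\wedge d\theta)=dx\wedge dy$ because $(I_\epsilon\circ H_\epsilon,\theta)$ are canonically conjugate along each fiber. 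The last sentence of the corollary follows at once from the $\R/\Z$-periodicity built into $\theta$.

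The only real obstacle is constructing the global angle, which reduces to exhibiting a global section of the circle fibration by regular level sets; the half-line $\Sigma$ does the job thanks to the monotonicity of $H_\epsilon(\cdot,0)$ on $[0,1[$. Every other verification — smoothness of $I_\epsilon$, symplecticity of $\Psi_\epsilon$, and effectivity of the induced $S^1$-action — reduces by connectedness of the regular-value set to a check on a single local action-angle chart already supplied by Lemma \ref{lemma:loc_aa}.
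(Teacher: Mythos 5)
Your proposal is correct, but it takes a genuinely different route from the paper. The paper's proof is soft: it invokes the vanishing of the Duistermaat-type obstructions to gluing the local action--angle charts of Lemma \ref{lemma:loc_aa} over the contractible base $\left]\frac{\epsilon}{2},+\infty\right[$ (citing \cite{duistermaat}), and then obtains that $I_{\epsilon}$ is a diffeomorphism onto its image from $I_{\epsilon}'\neq 0$ together with connectedness of the set of regular values. You instead build the global isomorphism by hand: the action as the enclosed area (which, by Green's theorem, is exactly the formula \eqref{eq:2}, consistent with Remark \ref{rmk:action} and \eqref{eq:1} used later), the angle as the flow time of $\mathcal{X}_{I_{\epsilon}\circ H_{\epsilon}}$ measured from the explicit global section $\Sigma=\{(x,0): 0<x<1\}$, and monotonicity via the classical identity $I_{\epsilon}'(c)=T(c)>0$. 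Your construction is essentially a self-contained proof of the global Liouville--Arnol'd theorem in this special case, and the existence of the global section $\Sigma$ is precisely the geometric reason the Chern-class obstruction vanishes, so the two arguments are two faces of the same fact; what yours buys is explicitness (the $S^1$-action, the section and the action variable are all concrete, which meshes well with the later use of \eqref{eq:1}), at the cost of a few routine verifications you only gesture at: smoothness of the hitting time $\theta$ (implicit function theorem plus transversality of $\Sigma$ to the fibers), the fact that $1$ is the \emph{minimal} period of the flow of $\mathcal{X}_{I_{\epsilon}\circ H_{\epsilon}}$ on each regular fiber (needed for freeness and bijectivity of $\Psi_{\epsilon}$, and following from $I_{\epsilon}'=T$ with $T$ the minimal period), and the sign/orientation check in $\Psi_{\epsilon}^*\left(da\wedge d\theta\right)=dx\wedge dy$, which depending on the Hamiltonian vector field convention may require replacing $\theta$ by $-\theta$. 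None of these is a gap; they are standard one-line fixes.
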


\begin{proof}
  Fix $\epsilon >0$. The topological obstructions to gluing the local
  isomorphisms of Lemma \ref{lemma:loc_aa} depend on the topology of
  the intersection of the set of regular values of $H_{\epsilon}$ with
  the image of $H_{\epsilon}$ (cf. \cite{duistermaat}). In particular,
  they vanish if this intersection is contractible. Therefore, since
  the
  intersection under consideration equals
  $\left]\frac{\epsilon}{2},+\infty \right[$, the local isomorphisms
  of Lemma \ref{lemma:loc_aa} can be glued together to obtain an
  isomorphism $\left(\Psi_\epsilon,I_\epsilon\right)$ between the subsystems of $H_{\epsilon} : B^1_\infty \times_L \R \to
  \R$ and $\mathrm{pr}_1 : \left(\R \times S^1, da \wedge
    d\theta\right) \to \R$ relative to
  $H^{-1}_{\epsilon}\left(\left] \frac{\epsilon}{2}, + \infty
    \right[\right)$ and $I_{\epsilon}\left(\left] \frac{\epsilon}{2},
      + \infty \right[\right)$ respectively. It remains to show that
  $I_{\epsilon}$ is a diffeomorphism onto its image. Since, for any
  open subset $U$ as in Lemma \ref{lemma:loc_aa}, $I_{\epsilon}|_U =
  I^U_{\epsilon}$, for any $c
  \in \left]\frac{\epsilon}{2},+\infty
  \right[$, $I'_{\epsilon}(c) \neq 0 $. Connectedness of $\left]
    \frac{\epsilon}{2}, + \infty \right[$ gives that $I_{\epsilon}$ is strictly monotone and, therefore, a diffeomorphism
  onto its image.
\end{proof}

\begin{rmk}\label{rmk:action}
  As a consequence of the proof of Corollary \ref{cor:global}, the right
  hand side of \eqref{eq:2} equals the function $I_{\epsilon}(c)$ for
  any $c \in \left] \frac{\epsilon}{2},+\infty
  \right[$. Substituting the function $y(x,c)$
  obtained by solving explicitly $H_{\epsilon}(x,y) = c$ in equation \eqref{eq:2},
  we obtain
  \begin{equation}
    \label{eq:1}
    I_{\epsilon}(c)  = 4 \int\limits_{0}^{\sqrt{1 - \frac{\epsilon}{2c}}} \sqrt{2c -
      \frac{\epsilon}{1-x^2}} d x = 4\sqrt{2c-\epsilon} \int\limits_{0}^{\sqrt{1 - \frac{\epsilon}{2c}}} \sqrt{1 -
      \frac{\epsilon x^2}{(2c-\epsilon)(1-x^2)}} d x.
  \end{equation}
  Formula \eqref{eq:1} gives that the action $I_{\epsilon}$ varies
  continuously with $\epsilon$.
\end{rmk}

For each $\epsilon > 0$, Corollary \ref{cor:global} describes the symplectic geometry of the
restriction of the integrable system $H_{\epsilon}:B^1_{\infty}
\times_L \R \to \R$ to its regular points. In fact, it is possible
to strengthen Corollary \ref{cor:global} to provide a description of
the integrable system that includes its singular point; this can be
achieved by exploiting the {\em
  linearization} results for non-degenerate singular elliptic points
(cf. \cite{duf_mol,eliasson} for further details in general). For the
purposes at hand, it suffices to state the linearization result in the
simplest case, which is a consequence of the main theorem in
\cite{cdvv}.

\begin{thm}[Colin de Verdi\`ere and Vey, \cite{cdvv}]\label{thm:elliptic}
  Let $H : \left(\R^2, dx \wedge dy\right) \to \R$ be an integrable system such that $(0,0)$ is a singular point of $H$ and
  the Hessian of $H$ at $(0,0)$ is positive definite. Then there exist
  open neighborhoods $U \subset H(\R^2)$, $V \subset
  \left[0,+\infty\right[$ of $H(0,0)$ and of $0$ respectively, a local
  diffeomorphism $I : U \to V$ with $I(H(0,0)) = 0$, and a
  symplectomorphism $\Psi : \left(H^{-1}(U), d x \wedge d y\right) \to
  \left(\mu^{-1}(V),d u \wedge dv \right)$ such that
  $\left(\Psi,I\right)$ is an isomorphism between the subsystems of
  $H : \left(\R^2, dx \wedge dy\right) \to \R$ and of $\mu :
  \left(\R^2, du \wedge dv\right) \to \R$ relative to $H^{-1}(U)$ and
  $\mu^{-1}(V)$ respectively, where $\mu(u,v) = \pi\left(u^2 + v^2\right)$. In particular, $I
  \circ H : \left(H^{-1}(U), d x \wedge d y\right) \to \R$ is the
  moment map of an effective Hamiltonian $S^1$-action.
\end{thm}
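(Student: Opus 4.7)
The plan is to reduce the statement to the main theorem of \cite{cdvv}, which provides a symplectic linearization of $H$ near the elliptic critical point $(0,0)$: one can find a symplectomorphism $\tilde\Psi$ from an open neighborhood of $(0,0)$ in $\R^2$ onto an open neighborhood of $(0,0)$ in $\R^2$ fixing the origin, together with a smooth function $f$ defined near $H(0,0)$, such that $H = f\circ \mu\circ \tilde\Psi$ on that neighborhood, where $\mu(u,v)=\pi(u^2+v^2)$. Once this linearization is in hand, the theorem amounts to packaging it into the language of Definition \ref{defn:im_is}, with $I := f^{-1}$.

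Concretely, I would first invoke \cite{cdvv} to obtain $(\tilde\Psi, f)$ as above. To show that $f$ is a local diffeomorphism near $H(0,0)$, I would differentiate $H = f(\mu\circ \tilde\Psi)$ twice at $(0,0)$: using that $d_{(0,0)}H=0$, $d_{(0,0)}(\mu\circ\tilde\Psi)=0$, and that the Hessian of $\mu\circ\tilde\Psi$ at the origin is positive definite (being the pullback of the positive definite Hessian of $\mu$ through the linear isomorphism $d_{(0,0)}\tilde\Psi$), the chain rule yields
\[
\mathrm{Hess}_{(0,0)} H \;=\; f'(H(0,0)) \cdot \mathrm{Hess}_{(0,0)}(\mu\circ\tilde\Psi).
\]
Since the left-hand side is positive definite by hypothesis, $f'(H(0,0))>0$, so $f$ is locally a diffeomorphism onto a neighborhood of $0$. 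I would then set $I:=f^{-1}$ on a sufficiently small neighborhood $U$ of $H(0,0)$ in $H(\R^2)$. Because $(0,0)$ is a local minimum of $H$, the set $U$ sits in $[H(0,0),\infty[$, so $V:=I(U)$ sits in $[0,\infty[$. Taking $\Psi$ to be the restriction of $\tilde\Psi$ to $H^{-1}(U)$ (shrinking the domain of $\tilde\Psi$ if necessary so that $\Psi$ maps $H^{-1}(U)$ symplectomorphically onto $\mu^{-1}(V)$), the identity $\mu\circ\Psi = I\circ H$ follows at once from $H = f(\mu\circ\Psi)$. This is precisely the isomorphism condition of Definition \ref{defn:im_is}, and $I(H(0,0))=0$ holds by construction. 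The final assertion, that $I\circ H$ is the moment map of an effective Hamiltonian $S^1$-action, is then immediate, since $\mu$ is the moment map of the standard rotational $S^1$-action on $(\R^2,du\wedge dv)$, and this action transfers through the symplectomorphism $\Psi$.

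The main obstacle is the first step, namely the Colin de Verdi\`ere--Vey theorem itself. Its proof requires a formal Birkhoff normal form at the elliptic point (to kill the Taylor coefficients of $H$ that do not Poisson-commute with $\mu$) followed by a Moser-type deformation argument to promote the formal conjugation to a smooth one; this analytic work is nontrivial but orthogonal to the present paper, so I would invoke \cite{cdvv} as a black box and treat the rest as bookkeeping.
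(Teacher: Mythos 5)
Your proposal is correct and follows essentially the same route as the paper, which states Theorem \ref{thm:elliptic} without proof as a direct consequence of the main theorem of \cite{cdvv} --- precisely the black box you invoke, the remaining steps (inverting $f$, restricting $\tilde\Psi$, transporting the $S^1$-action) being routine bookkeeping. Only a cosmetic slip: in your chain-rule identity the derivative of $f$ should be evaluated at $\mu\bigl(\tilde\Psi(0,0)\bigr)=0$ rather than at $H(0,0)$, since $f$ is precomposed with $\mu\circ\tilde\Psi$ and $f(0)=H(0,0)$; the conclusion $f'(0)>0$ is unaffected.
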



Applying Theorem \ref{thm:elliptic} to the family of integrable
systems $H_{\epsilon}: B^1_{\infty} \times_L \R \to \R$, we obtain the
following result.

\begin{lemma}\label{lemma:extend}
  For each $\epsilon > 0$, the action function $I_{\epsilon}: \left]
    \frac{\epsilon}{2}, + \infty \right[ \to \R$ of Corollary
  \ref{cor:global} extends to a smooth function defined on $\left[\frac{\epsilon}{2}, + \infty \right[$.
\end{lemma}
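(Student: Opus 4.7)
The plan is to combine Theorem \ref{thm:elliptic}, applied at the singular point $(0,0) \in B^1_{\infty} \times_L \R$, with the global action function $I_\epsilon$ from Corollary \ref{cor:global}, and to show that the two resulting action variables agree up to sign and additive constant on the overlap of their domains. This produces the desired smooth extension to $\epsilon/2$.

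More precisely, by Proposition \ref{prop:basic_is} the unique singular point of $H_\epsilon$ is $(0,0)$, with $H_\epsilon(0,0)=\epsilon/2$ and positive definite Hessian, so the hypotheses of Theorem \ref{thm:elliptic} are satisfied. Applying it furnishes an open neighborhood $U \subset \left[\frac{\epsilon}{2}, +\infty\right[$ of $\epsilon/2$, an open neighborhood $V \subset \left[0, +\infty\right[$ of $0$, a smooth local diffeomorphism $\tilde I : U \to V$ with $\tilde I(\epsilon/2)=0$, and a symplectomorphism linearizing $H_\epsilon$ over $U$. In particular, $\tilde I$ is smooth up to and including the boundary point $\epsilon/2$, and $\tilde I \circ H_\epsilon$ is the moment map of an effective Hamiltonian $S^1$-action on $H^{-1}_\epsilon(U)$.

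The key step is then to match $\tilde I$ with $I_\epsilon$ on the connected overlap $U \cap \left]\frac{\epsilon}{2}, +\infty\right[$. There both $\tilde I \circ H_\epsilon$ and $I_\epsilon \circ H_\epsilon$ are moment maps for effective Hamiltonian $S^1$-actions whose Hamiltonian vector fields, $(\tilde I' \circ H_\epsilon)\, X_{H_\epsilon}$ and $(I_\epsilon' \circ H_\epsilon)\, X_{H_\epsilon}$, are rescalings of $X_{H_\epsilon}$. Effectiveness forces each rescaling to produce a flow of minimal period $1$ on every regular fiber, so $|\tilde I'(c)|=|I_\epsilon'(c)|$ for all $c$ in the overlap; by continuity and connectedness, $\tilde I' = \sigma I_\epsilon'$ for some constant $\sigma \in \{\pm 1\}$. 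Integrating yields $I_\epsilon = \sigma \tilde I + \kappa$ for some $\kappa \in \R$ on the overlap, which extends $I_\epsilon$ smoothly across $\epsilon/2$ through $\tilde I$.

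The main obstacle is the uniqueness argument comparing the two $S^1$-actions: it relies on the fact that effectiveness pins down the minimal period of the $S^1$-flow to be exactly $1$ on each fiber, which in turn determines the rescaling factor up to sign, and that the sign is then locked down by continuity on the connected overlap. As an alternative route, one could verify smoothness directly from the explicit formula \eqref{eq:1} via the substitution $x = \sqrt{1 - \epsilon/(2c)}\, t$, which factors out both the degenerating upper limit and the vanishing $\sqrt{2c-\epsilon}$ prefactor and expresses $I_\epsilon(c)$ as a smooth function of $c$ times a convergent elliptic-type integral in the parameter $1-\epsilon/(2c)$; smoothness at $c=\epsilon/2$ would then be manifest. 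The normal-form route is preferable, however, because the linearization provided by Theorem \ref{thm:elliptic} is also needed in the subsequent symplectic constructions of Section \ref{sec:constr-sympl}.
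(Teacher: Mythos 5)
Your proposal is correct and follows essentially the same route as the paper: apply Theorem \ref{thm:elliptic} at the elliptic point to get a local action $\tilde I$ smooth up to $\frac{\epsilon}{2}$, then use effectiveness of the two Hamiltonian $S^1$-actions on the connected overlap to force the derivatives to agree up to a constant sign, so that $I_\epsilon$ and $\tilde I$ differ by an element of $\mathrm{AGL}(1;\Z)$ and $I_\epsilon$ inherits the smooth extension. The only cosmetic difference is that the paper performs this comparison after transporting to the model $(\R^2,\mu)$, studying $\frac{d(I_\epsilon\circ I^{-1})}{dc}$, whereas you compare $\tilde I'$ and $I_\epsilon'$ directly on the base; the underlying argument is identical.
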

\begin{proof}
  Fix $\epsilon > 0$. The integrable system $H_{\epsilon}
  :B^1_{\infty} \times_L \R \to \R$ satisfies the hypotheses of Theorem
  \ref{thm:elliptic}. Therefore it is possible to find open
  neighborhoods $U \subset  \left[\frac{\epsilon}{2}, + \infty
  \right[$ and $V \subset \left[0,+\infty\right[$ of $\frac{\epsilon}{2}$ and $0$
  respectively, and an isomorphism $\left(\Psi,I\right)$ between the
  subsystems of $H_{\epsilon} : B^1_{\infty} \times_L \R \to \R$ and of $\mu :
  \left(\R^2, du \wedge dv\right) \to \R$ relative to $H_{\epsilon}^{-1}(U)$ and
  $\mu^{-1}(V)$ respectively. Shrinking $U$ if needed, it may be
  assumed that $U$ is connected and that $I$ is a diffeomorphism onto
  $V$. By abuse of notation denote the restrictions of $I$ and
  $\Psi$ to $U \cap \left]\frac{\epsilon}{2}, + \infty \right[$ and
  $H^{-1}_{\epsilon}\left(U \cap \left]\frac{\epsilon}{2}, + \infty
    \right[\right)$ respectively by $I$ and $\Psi$. 

  Since $\mathrm{pr}_1 \circ \Psi_{\epsilon} \circ \Psi^{-1}$ is the
  moment map of an effective Hamiltonian $S^1$-action, so is
  $I_{\epsilon} \circ I^{-1} \circ \mu$. Moreover, if
  $\mathcal{X}_{\mu}$ and $\mathcal{X}_{I_{\epsilon} \circ I^{-1}
    \circ \mu}$ denote the Hamiltonian vector fields of the functions
  $\mu$ and $\mathcal{X}_{I_{\epsilon} \circ I^{-1}
    \circ \mu}$ respectively, then $\mathcal{X}_{I_{\epsilon} \circ I^{-1}
    \circ \mu} = \left(\frac{d \left(I_{\epsilon} \circ I^{-1}\right)}{d c}
    \circ \mu\right) \mathcal{X}_{\mu}$. Since $\mu$ and $\frac{d \left(I_{\epsilon} \circ I^{-1}\right)}{d c}
  \circ \mu$ Poisson commute and are moment maps of effective
  Hamiltonian $S^1$-actions, it follows that the
  function $\frac{d \left(I_{\epsilon} \circ I^{-1}\right)}{d c}
  \circ \mu$ takes values in $\left\{\pm 1\right\}$. Since
  $\mu^{-1}(V) \smallsetminus \left\{(0,0)\right\}$ is connected, then $\frac{d \left(I_{\epsilon} \circ I^{-1}\right)}{d c}
  \circ \mu$ is constant. Moreover, since $\mu$ is a submersion
  restricted to $\mu^{-1}(V) \smallsetminus \left\{(0,0)\right\}$, it
  follows that $\frac{d \left(I_{\epsilon} \circ I^{-1}\right)}{d c}$
  is constant and equal to $\pm 1$. Thus the function $I_{\epsilon}
  \circ I^{-1}$ is the restriction of an element $h$ of $\mathrm{AGL}(1;\Z):=
  \mathrm{GL}(1;\Z) \ltimes \R$ to $V \smallsetminus \{0\}$. In
  particular, since $I$ can be extended smoothly at $\frac{\epsilon}{2}$, so can
  $I_{\epsilon}$, which proves the desired result.
\end{proof}

By abuse of notation, denote the extension given by Lemma
\ref{lemma:extend} also by $I_{\epsilon} :
\left[\frac{\epsilon}{2}, +\infty \right[ \to \R$.

\begin{lemma}\label{lemma:action}
  For a fixed $\epsilon > 0$, the map $I_{\epsilon}:
  \left[\frac{\epsilon}{2}, + \infty\right[ \to [0,+\infty[$ is a
  diffeomorphism.
\end{lemma}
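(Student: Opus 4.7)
The plan is to combine the explicit integral formula \eqref{eq:1} for $I_\epsilon$ with the regularity statements in Corollary \ref{cor:global} and Lemma \ref{lemma:extend}, in order to verify the three properties that make $I_\epsilon$ a diffeomorphism from $[\epsilon/2,\infty)$ onto $[0,\infty)$: the correct boundary behaviour at both ends, strict monotonicity, and nowhere-vanishing derivative.

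First I would compute the boundary values using \eqref{eq:1}. Writing $I_\epsilon(c) = 4\sqrt{2c-\epsilon}\, J(c)$ with
\[
J(c) = \int_0^{\sqrt{1-\epsilon/(2c)}} \sqrt{1 - \tfrac{\epsilon x^2}{(2c-\epsilon)(1-x^2)}}\, dx,
\]
the factor $J(c)$ is bounded by the length of its interval of integration, so $I_\epsilon(c) \to 0$ as $c \to \epsilon/2^+$; combined with the continuity at $\epsilon/2$ provided by Lemma \ref{lemma:extend}, this forces $I_\epsilon(\epsilon/2) = 0$. As $c \to \infty$, the integrand of $J(c)$ is bounded by $1$ and converges pointwise to $1$ on $[0,1)$ while the upper limit tends to $1$; by dominated convergence $J(c) \to 1$, and hence $I_\epsilon(c) \sim 4\sqrt{2c} \to +\infty$.

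Next I would verify that $I_\epsilon'$ does not vanish on $[\epsilon/2,\infty)$. On the interior $(\epsilon/2,\infty)$ this is immediate from Corollary \ref{cor:global}, which asserts that $I_\epsilon$ is a diffeomorphism onto its image. At the boundary point $c=\epsilon/2$, the proof of Lemma \ref{lemma:extend} shows that on a suitable neighborhood $U$ of $\epsilon/2$ the composition $I_\epsilon \circ I^{-1}$ agrees with an element $h \in \mathrm{AGL}(1;\Z)$ of slope $\pm 1$, where $I : U \to V$ is the local diffeomorphism provided by Theorem \ref{thm:elliptic}; hence $I_\epsilon'(\epsilon/2) = \pm I'(\epsilon/2) \neq 0$.

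Combining these ingredients, $I_\epsilon$ is smooth on the connected interval $[\epsilon/2,\infty)$ with a nowhere-vanishing derivative, hence of constant sign. Since $I_\epsilon(c) > 0 = I_\epsilon(\epsilon/2)$ for $c > \epsilon/2$ by positivity of the integrand in \eqref{eq:1}, the derivative is positive, so $I_\epsilon$ is strictly increasing. Together with $I_\epsilon(\epsilon/2)=0$ and $I_\epsilon(c) \to +\infty$, this produces a smooth bijection onto $[0,\infty)$ whose inverse is smooth by the inverse function theorem applied to the nonvanishing derivative (including at the boundary). The only step that is not essentially immediate from the prior lemmas is the asymptotic $J(c) \to 1$ at infinity, which I expect to be the main, though still routine, point requiring a direct calculation.
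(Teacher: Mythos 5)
Your proposal is correct and takes essentially the same route as the paper's proof: the same boundedness estimate on the integral factor in \eqref{eq:1} to get $I_{\epsilon}(\frac{\epsilon}{2})=0$, the same observation that the integral tends to $1$ as $c\to+\infty$ to get unboundedness, and the same appeal to Corollary \ref{cor:global} together with (the proof of) Lemma \ref{lemma:extend} for the nonvanishing of $I_{\epsilon}'$ up to the boundary. The only cosmetic difference is that you deduce strict monotonicity from the constant sign of the nowhere-vanishing derivative, whereas the paper compares the integrals for $c_1>c_2$ directly; both are valid.
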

\begin{proof}
  Fix $\epsilon > 0$. The proofs of Theorem \ref{thm:elliptic} and of Lemma
\ref{lemma:extend} imply that the derivative of $I_{\epsilon}$ at
$\frac{\epsilon}{2}$ does not vanish, which, together with Corollary
\ref{cor:global}, gives that $I'_{\epsilon}$ does not vanish on $\left[\frac{\epsilon}{2}, +\infty \right[$. To prove the desired result, it suffices to
  show that $I_{\epsilon} \left(\frac{\epsilon}{2}\right) = 0$, that
  $I_{\epsilon}$ is strictly increasing, and that the image of $I_{\epsilon}$ is
  not bounded. To prove the first result, we have to show that $ \lim\limits_{c \to
    \frac{\epsilon}{2}^+} I_{\epsilon}(c) = 0$. Using equation
  \eqref{eq:1}, it suffices to prove that the integral $\int\limits_{0}^{\sqrt{1 - \frac{\epsilon}{2c}}} \sqrt{1 -
    \frac{\epsilon x^2}{(2c-\epsilon)(1-x^2)}} d x$ is bounded. Since the integrand
  is non-negative, this integral is certainly non-negative; on the
  other hand, the integrand is less than 1, which implies that the
  integral is, in fact, bounded as required. This shows that
  $I_{\epsilon} \left(\frac{\epsilon}{2}\right) = 0$.

  Let $c_1 > c_2 \geq \frac{\epsilon}{2}$. Then
  \begin{equation*}
    \begin{split}
      I_{\epsilon}(c_1) &= 4 \int\limits_{0}^{\sqrt{1 - \frac{\epsilon}{2c_1}}} \sqrt{2c_1 -
        \frac{\epsilon}{1-x^2}} d x  > 4 \int\limits_{0}^{\sqrt{1 - \frac{\epsilon}{2c_2}}} \sqrt{2c_1 -
        \frac{\epsilon}{1-x^2}} d x \\
      &> 4 \int\limits_{0}^{\sqrt{1 - \frac{\epsilon}{2c_2}}} \sqrt{2c_2 -
        \frac{\epsilon}{1-x^2}} d x = I_{\epsilon}(c_2),
    \end{split}
  \end{equation*}
  \noindent
  where the first inequality follows from the fact that the function $\sqrt{2c_1 -
    \frac{\epsilon}{1-x^2}}$ is positive on $\left[\sqrt{1 -
      \frac{\epsilon}{2c_2}}, \sqrt{1 -
      \frac{\epsilon}{2c_1}}\right]$, while the second is a
  consequence of the fact
  that $c_1 > c_2$ implies that, for all  $ x \in \left[0, \sqrt{1 -
      \frac{\epsilon}{2c_2}}\right]$, $\sqrt{2c_1 -
    \frac{\epsilon}{1-x^2}} > \sqrt{2c_2 -
    \frac{\epsilon}{1-x^2}}$. Therefore, $I_{\epsilon}$ is strictly
  increasing as desired.

  Finally, to see that $I_{\epsilon}$ is unbounded, observe that, by
  equation \eqref{eq:1}, it suffices to show that, for all $c $
  sufficiently large, the integral
  $\int\limits_{0}^{\sqrt{1 - \frac{\epsilon}{2c}}} \sqrt{1 -
    \frac{\epsilon x^2}{(2c-\epsilon)(1-x^2)}} d x$
  is bounded away from 0. To this end, observe that this integral
  depends continuously on $c$ so that, as $c \to + \infty$, the above
  integral tends to
  $1$, thus implying the desired property.
\end{proof}

In order to strengthen Corollary \ref{cor:global} to include the
singular point of $H_{\epsilon} : B^1_\infty \times_L \R \to \R$, we
need the following result, which is a consequence of \cite[Theorem
1.3]{kar_ler} (and a generalization of the well-known classification
of {\em compact} symplectic toric manifolds due to Delzant, cf. \cite{delzant}), and is stated below without proof.

\begin{thm}\label{thm:class_stm}
  For $i=1,2$, let $\left(M_i,\omega_i,\mu_i\right)$ be a symplectic
  toric manifold with connected fibers with $\mu_i\left(M_i\right)$
  contractible. Then there exists a symplectomorphism $\Psi :
  \left(M_1,\omega_1\right) \to \left(M_2,\omega_2\right)$ with $\mu_2
  \circ \Psi = \mu_1$ if and only if $\mu_1\left(M_1\right) =
  \mu_2\left(M_2\right)$.
\end{thm}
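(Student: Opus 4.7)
The ``only if'' direction is straightforward: any symplectomorphism $\Psi$ satisfying $\mu_2\circ\Psi=\mu_1$ immediately yields $\mu_1(M_1)=\mu_2(\Psi(M_1))\subseteq\mu_2(M_2)$, and swapping the roles of $M_1$ and $M_2$ gives the reverse inclusion. So the entire content is in the ``if'' direction.

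For the ``if'' direction, my plan is to invoke the classification in \cite[Theorem 1.3]{kar_ler} as a black box. That theorem attaches to each symplectic toric manifold $(M_i,\omega_i,\mu_i)$ with connected fibers a list of invariants defined on the orbit space $M_i/\mathbb{T}^n$. Since the fibers of $\mu_i$ are assumed connected, $\mu_i$ descends to a homeomorphism of $M_i/\mathbb{T}^n$ onto $\mu_i(M_i)\subset\R^n$. The Karshon--Lerman invariants consist of (i) a ``unimodular local structure'' on the orbit space, which in our setting is inherited from the affine structure of the ambient $\R^n$ and so depends only on $\mu_i(M_i)$ as a subset of $\R^n$; and (ii) a degree-two cohomology class (with values in a sheaf whose stalks encode the Marle--Guillemin--Sternberg local models) which measures the obstruction to globalizing those local models. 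The hypothesis that $\mu_i(M_i)$ is contractible forces this second invariant to vanish, so the Karshon--Lerman data reduces to $\mu_i(M_i)$ itself. Since $\mu_1(M_1)=\mu_2(M_2)$ by assumption, the classification produces an equivariant symplectomorphism $\Psi:M_1\to M_2$; by construction it covers the identity on the common orbit space, hence automatically satisfies $\mu_2\circ\Psi=\mu_1$.

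A more self-contained alternative would be the following semi-direct construction. Using the Marle--Guillemin--Sternberg normal form, for each point $p$ of the common image $\Omega:=\mu_1(M_1)=\mu_2(M_2)$ one obtains an open neighborhood $U_p\subset\Omega$ and an equivariant symplectomorphism $\Psi_p:\mu_1^{-1}(U_p)\to\mu_2^{-1}(U_p)$ intertwining the moment maps, where $\Psi_p$ depends only on $U_p$ viewed as a subset of $\R^n$. Choosing a good cover $\{U_\alpha\}$ of $\Omega$ by such neighborhoods, the ambiguity in the local choices $\Psi_\alpha$ lives in the sheaf $\mathcal{A}$ of moment-map-preserving $\mathbb{T}^n$-equivariant symplectic automorphisms of the local models; the discrepancies $\Psi_\beta\circ\Psi_\alpha^{-1}$ then define a \v{C}ech $1$-cocycle in $\mathcal{A}$. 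Since $\Omega$ is contractible, $H^1(\Omega;\mathcal{A})=0$, so the cocycle is a coboundary and the $\Psi_\alpha$ can be modified to glue into a global $\Psi$.

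The main obstacle in either route is the same: identifying the correct sheaf-theoretic obstruction and establishing its vanishing. The automorphism sheaf $\mathcal{A}$ mixes continuous torus translations along the Lagrangian fibres with discrete data encoded by the stabilizer lattices at higher-isotropy strata of $\Omega$, and genuinely depends on the stratified structure inherited from $\R^n$; proving that the relevant first cohomology vanishes on a contractible moment image (with corners) is precisely the technical heart of Karshon--Lerman's theorem, and is the step that cannot reasonably be reproduced from scratch in this paper.
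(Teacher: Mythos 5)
Your argument is correct and matches the paper's treatment: the paper states this result without proof, deducing it directly from \cite[Theorem 1.3]{kar_ler}, which is exactly your first route (the only-if direction being trivial, and contractibility of the moment image killing the degree-two Karshon--Lerman invariants so that the classification reduces to the image itself, with the resulting isomorphism covering the identity and hence intertwining the moment maps). Your alternative \v{C}ech-gluing sketch is just an outline of Karshon--Lerman's own proof, and you correctly identify that its technical core is not something to reprove here.
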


\begin{rmk}\label{rmk:cont_fam}
  While not explicitly stated in \cite{kar_ler}, it follows from ideas
  therein that if $\left\{\left(M,\omega,
      \mu_{\epsilon}\right)\right\}_{\epsilon >0}$ is a family of
  symplectic toric manifolds depending continuously on a parameter
  $\epsilon$ such that
  \begin{itemize}[leftmargin=*]
  \item for all $\epsilon > 0$, the fibers of $\mu_\epsilon$ are
    connected, and
  \item there exists a symplectic toric manifold
    $\left(M',\omega',\mu'\right)$ with $\mu'\left(M'\right) =
    \mu_\epsilon\left(M\right)$ for all $\epsilon >0$,
  \end{itemize}
  then the family of symplectomorphisms $\Psi_\epsilon :
  \left(M,\omega\right) \to \left(M',\omega'\right)$ can be chosen to
  depend continuously on $\epsilon$.
\end{rmk}

In analogy with Corollary \ref{cor:global}, we have the following
result describing the symplectic geometry of the integrable
system $H_{\epsilon} :B^1_{\infty} \times_L \R\to \R$.

\begin{cor}\label{cor:glob_sing}
  For any $\epsilon > 0$, there exists a symplectomorphism
  $\Psi_{\epsilon} : B^1_{\infty} \times_L \R \to \left(\R^2 , du
    \wedge d v\right)$ such that $\left(\Psi_{\epsilon},
    I_{\epsilon}\right)$ is an isomorphism between $H_{\epsilon} :
  B^1_{\infty} \times_L \R\to \R$ and $\mu: \left(\R^2 , du
    \wedge d v\right) \to \R$, where $\mu(u,v) =
  \pi \left(u^2 + v^2\right)$. Moreover, the family
  $\left\{\Psi_{\epsilon}\right\}_{\epsilon > 0}$ may be chosen to depend continuously
  on $\epsilon$.
\end{cor}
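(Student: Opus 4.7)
The plan is to promote $F_\epsilon := I_\epsilon \circ H_\epsilon$ to the moment map of a \emph{global} effective Hamiltonian $S^1$-action on $B^1_\infty \times_L \R$, and then invoke Theorem \ref{thm:class_stm} to compare this symplectic toric manifold with the standard one $(\R^2,du\wedge dv,\mu)$, where $\mu(u,v)=\pi(u^2+v^2)$; continuous dependence will then follow from Remark \ref{rmk:cont_fam}.

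Fix $\epsilon>0$. By Lemma \ref{lemma:extend}, $F_\epsilon$ is smooth on the whole of $B^1_\infty\times_L\R$. On the regular locus $(B^1_\infty\times_L\R)\smallsetminus\{(0,0)\}$, Corollary \ref{cor:global} already produces an isomorphism $(\Psi_\epsilon,I_\epsilon)$ between the subsystem of $H_\epsilon$ and $\mathrm{pr}_1:(\R\times S^1,da\wedge d\theta)\to\R$, and hence exhibits $F_\epsilon$ as the moment map of an effective Hamiltonian $S^1$-action there. Near $(0,0)$, Theorem \ref{thm:elliptic} supplies a local chart, and the argument used in the proof of Lemma \ref{lemma:extend} shows that $I_\epsilon\circ I^{-1}$ restricts to an element of $\mathrm{AGL}(1;\Z)$, so $F_\epsilon$ is also the moment map of a local effective Hamiltonian $S^1$-action in that chart. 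Since the Hamiltonian vector field of a smooth function is intrinsically defined by the function and the symplectic form, the two actions share their infinitesimal generator on the overlap, and therefore glue to a smooth global effective Hamiltonian $S^1$-action on $B^1_\infty\times_L\R$ whose moment map is $F_\epsilon$.

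Consequently $(B^1_\infty\times_L\R,dx\wedge dy,F_\epsilon)$ is a symplectic toric manifold; by Lemma \ref{lemma:action} its moment map image equals $I_\epsilon([\epsilon/2,+\infty[)=[0,+\infty[$, and its fibers are connected by Corollary \ref{cor:compact_connected} (composition with the diffeomorphism $I_\epsilon$ preserves connectedness). The standard toric manifold $(\R^2,du\wedge dv,\mu)$ has the same contractible moment map image $[0,+\infty[$ and connected fibers, so Theorem \ref{thm:class_stm} produces a symplectomorphism $\Psi_\epsilon:B^1_\infty\times_L\R\to\R^2$ with $\mu\circ\Psi_\epsilon=F_\epsilon=I_\epsilon\circ H_\epsilon$, which is precisely the required statement. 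For continuous dependence in $\epsilon$, Remark \ref{rmk:action} and Lemma \ref{lemma:extend} imply that the family of moment maps $F_\epsilon$ varies continuously on the fixed symplectic manifold $B^1_\infty\times_L\R$; Remark \ref{rmk:cont_fam} then guarantees that the family $\{\Psi_\epsilon\}_{\epsilon>0}$ can be chosen continuous in $\epsilon$.

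The main obstacle is the gluing step, namely the assembly of the local action near the elliptic singular point with the global action on the regular locus into a single smooth action on all of $B^1_\infty\times_L\R$. This is handled by combining the smoothness of $F_\epsilon$ at $(0,0)$ from Lemma \ref{lemma:extend} with the $\mathrm{AGL}(1;\Z)$-rigidity invoked in its proof: because any two moment maps for Poisson-commuting effective Hamiltonian $S^1$-actions with the same orbits differ by an affine map with integer linear part, the local elliptic model and the global action on the regular part are forced to share generators, and hence to define a smooth global $S^1$-action.
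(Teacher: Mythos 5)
Your proposal is correct and follows essentially the same route as the paper's proof: recognize $I_\epsilon\circ H_\epsilon$ as the moment map of a global effective Hamiltonian $S^1$-action with connected fibers and moment image $\left[0,+\infty\right[$, then apply Theorem \ref{thm:class_stm} and Remark \ref{rmk:cont_fam}. The only difference is that you spell out the gluing of the elliptic local model with the action on the regular locus (which the paper compresses into ``by construction''), and your $\mathrm{AGL}(1;\Z)$/intrinsic-Hamiltonian-vector-field argument for this step is sound.
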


\begin{proof}
  Fix $\epsilon > 0$. By construction, the composite $I_{\epsilon}
  \circ H_{\epsilon}$ is the moment map of an effective Hamiltonian
  $S^1$-action with connected fibers whose image equals $\left[0, +
    \infty\right[$ by Lemma \ref{lemma:action}. Thus
  $\left(B^1_{\infty} \times_L \R, dx \wedge dy, I_{\epsilon} \circ
    H_\epsilon\right)$ and $\left(\R^2, du
    \wedge dv,\mu\right)$ are symplectic toric manifolds satisfying
  the hypotheses of Theorem \ref{thm:class_stm}. Seeing as they have equal moment map
  images, Theorem \ref{thm:class_stm} ensures the existence of the desired symplectomorphism
  $\Psi_{\epsilon}$. The fact that $\Psi_{\epsilon}$ may be chosen to
  depend continuously on $\epsilon$ follows from Remark \ref{rmk:cont_fam}.
\end{proof}

An important consequence of Corollary \ref{cor:glob_sing}, which plays
a key role in the proof of Theorem \ref{thm:sympl}, is the following
result.

\begin{prop}\label{prop:decreasing_epsilon}
  For all $\epsilon_1 > \epsilon_2$ and for all $c \geq
  \frac{\epsilon_1}{2}$,
  \begin{equation}
    \label{eq:8}
    \Psi_{\epsilon_1}\left(H^{-1}_{\epsilon_1}\left(\left[
          \frac{\epsilon_1}{2},c \right[ \,\right)\right) \subset  \Psi_{\epsilon_2}\left(H^{-1}_{\epsilon_2}\left(\left[
          \frac{\epsilon_2}{2},c \right[ \,\right)\right).
  \end{equation}
\end{prop}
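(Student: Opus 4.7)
The plan is to reduce the set-theoretic inclusion \eqref{eq:8} to the pointwise monotonicity inequality $I_{\epsilon_1}(c) \le I_{\epsilon_2}(c)$ and then to establish the latter directly from the integral formula \eqref{eq:1}.

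First, I would exploit the defining property of the isomorphism $(\Psi_\epsilon, I_\epsilon)$, namely $\mu \circ \Psi_\epsilon = I_\epsilon \circ H_\epsilon$. Because $I_\epsilon$ is a diffeomorphism (Lemma \ref{lemma:action}), this identity yields $\Psi_\epsilon(H_\epsilon^{-1}(S)) = \mu^{-1}(I_\epsilon(S))$ for any $S \subset [\epsilon/2,+\infty[$. Taking $S = [\epsilon/2, c[$ and using that $I_\epsilon$ is strictly increasing with $I_\epsilon(\epsilon/2) = 0$, one gets
\[
\Psi_\epsilon\bigl(H_\epsilon^{-1}([\tfrac{\epsilon}{2},c[\,)\bigr) = \mu^{-1}\bigl([0, I_\epsilon(c)[\,\bigr) = \bigl\{(u,v) \in \R^2 : \pi(u^2+v^2) < I_\epsilon(c)\bigr\},
\]
which is an open disk centered at the origin. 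Consequently, the inclusion \eqref{eq:8} is equivalent to $I_{\epsilon_1}(c) \le I_{\epsilon_2}(c)$.

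The case $c = \epsilon_1/2$ is trivial since both sides of \eqref{eq:8} are empty, so I may assume $c > \epsilon_1/2 > \epsilon_2/2$ and use the explicit formula \eqref{eq:1} for both $I_{\epsilon_1}(c)$ and $I_{\epsilon_2}(c)$. The monotonicity in $\epsilon$ then follows from two elementary observations. First, the domain of integration $[0, \sqrt{1 - \epsilon/(2c)}\,]$ shrinks as $\epsilon$ increases, so that in particular
\[
[0, \sqrt{1 - \epsilon_1/(2c)}\,] \subset [0, \sqrt{1 - \epsilon_2/(2c)}\,].
\]
Second, on the common subinterval $[0, \sqrt{1 - \epsilon_1/(2c)}\,]$, the integrand is pointwise monotone in $\epsilon$: since $\epsilon_1 > \epsilon_2$ and $1-x^2 > 0$, one has
\[
\sqrt{2c - \frac{\epsilon_1}{1-x^2}} \le \sqrt{2c - \frac{\epsilon_2}{1-x^2}}.
\]
Chaining the two estimates (and using that the integrand is non-negative on the enlarged interval) gives $I_{\epsilon_1}(c) \le I_{\epsilon_2}(c)$, which completes the argument.

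There is no serious obstacle here; the only mild care required is in converting the inclusion between preimages under $\Psi_\epsilon$ into the scalar inequality for the action, which hinges crucially on the intertwining relation $\mu \circ \Psi_\epsilon = I_\epsilon \circ H_\epsilon$ and on $I_\epsilon$ being an increasing diffeomorphism sending $\epsilon/2$ to $0$.
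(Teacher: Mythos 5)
Your proposal is correct and follows essentially the same route as the paper: both reduce \eqref{eq:8}, via $\mu \circ \Psi_{\epsilon} = I_{\epsilon}\circ H_{\epsilon}$ and saturation with respect to $\mu$, to the inequality $I_{\epsilon_1}(c)\le I_{\epsilon_2}(c)$, which is then read off from the integral formula \eqref{eq:1} (you spell out the shrinking domain and pointwise-smaller integrand, which the paper only sketches by analogy with Lemma \ref{lemma:action}). The only slip is the claim that for $c=\epsilon_1/2$ \emph{both} sides of \eqref{eq:8} are empty -- the right-hand side need not be -- but this is harmless since emptiness of the left-hand side already gives the inclusion.
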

\begin{proof}
  Fix $\epsilon_1 > \epsilon_2$ and $c \geq
  \frac{\epsilon_1}{2}$. Firstly, observe
  that
  \eqref{eq:8} is equivalent to
  \begin{equation}
    \label{eq:10}
    \mu^{-1}\left(\left[0, I_{\epsilon_1}(c) \right[ \right) \subset
    \mu^{-1}\left( \left[0, I_{\epsilon_2}(c) \right[\right).
  \end{equation}
  \noindent
  This can be seen as follows: for
  $i=1,2$, we have that
  \begin{equation}
    \label{eq:9}
    \mu \circ \Psi_{\epsilon_i}\left(H^{-1}_{\epsilon_i}\left(\left[
          \frac{\epsilon_i}{2},c \right[ \,\right)\right) = I_{\epsilon_i}\left(\left[
        \frac{\epsilon_i}{2},c \right[ \,\right) =
    \left[0,I_{\epsilon_i}(c)\right[;
  \end{equation}
  \noindent
  the first equality follows from Corollary \ref{cor:glob_sing}, while
  the second from Lemma \ref{lemma:action}. Corollary
  \ref{cor:glob_sing} also implies that, for $i=1,2$, the subset $\Psi_{\epsilon_i}\left(H^{-1}_{\epsilon_i}\left(\left[
        \frac{\epsilon_i}{2},c \right[ \,\right)\right)$ is saturated
  with respect to $\mu$. This fact, together with equation
  \eqref{eq:9} implies the inclusion of equation \eqref{eq:8} holds if
  and only if that of equation \eqref{eq:10} does. To show that
  equation \eqref{eq:10} is true, it suffices to prove that
  $\left[0,
    I_{\epsilon_1}(c) \right[  \subset \left[0, I_{\epsilon_2}(c)
  \right[ $ or, equivalently, that $I_{\epsilon_1}(c) <
  I_{\epsilon_2}(c)$.  The proof of this last statement is analogous
  to an argument used in Lemma
  \ref{lemma:action}. Using equation \eqref{eq:1}, it can be seen
  that, for fixed $c$,
  $I_{\epsilon}(c)$ is a continuous, decreasing function of
  $\epsilon$. This yields the desired result.
\end{proof}

To conclude this section, we observe that equation \eqref{eq:1} implies that, in some sense, the
family of diffeomorphisms $\left\{I_{\epsilon}\right\}_{\epsilon > 0}$
converges uniformly as $\epsilon$ goes to 0.

\begin{lemma}\label{lemma:limit}
  For all $c >0$, $\lim\limits_{\epsilon \to
    0^+} I_{\epsilon}(c) = 4\sqrt{2c} =: I_0(c)$. Moreover, for any $\epsilon_0 >0$, any decreasing sequence $\epsilon_k$
  converging to 0 with the property that $\epsilon_0 > \epsilon_1$,
  and any compact subset $K \subset \R_{\geq 0}$, $I_{\epsilon_k} \to
  I_0$ uniformly in the set $K \cap \left[\frac{\epsilon_0}{2},+\infty\right[$.
\end{lemma}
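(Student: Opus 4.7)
The plan is to exploit the explicit integral representation of $I_\epsilon(c)$ given in equation \eqref{eq:1}. Writing $I_\epsilon(c) = 4\sqrt{2c-\epsilon}\, J_\epsilon(c)$ with
\[J_\epsilon(c) = \int_0^{\sqrt{1-\epsilon/(2c)}} \sqrt{1 - \frac{\epsilon x^2}{(2c-\epsilon)(1-x^2)}}\, dx,\]
the pointwise limit for fixed $c > 0$ follows by dominated convergence: after extending the integrands by zero past the upper limit of integration, they are uniformly bounded by $1$ and converge pointwise on $[0,1)$ to the constant function $1$ as $\epsilon \to 0^+$. Hence $J_\epsilon(c) \to 1$, and combining with $\sqrt{2c-\epsilon} \to \sqrt{2c}$ yields $I_\epsilon(c) \to 4\sqrt{2c} = I_0(c)$.

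For the uniform assertion, fix $\epsilon_0 > 0$ and a compact $K \subset \R_{\geq 0}$, set $M := \sup K$, and restrict attention to $\epsilon \leq \epsilon_0/2$ and $c \in K \cap [\epsilon_0/2, +\infty[$; on this range one has $2c - \epsilon \geq \epsilon_0/2$ and $\epsilon/(2c) \leq \epsilon/\epsilon_0$. I would split
\[I_\epsilon(c) - I_0(c) = 4\bigl(\sqrt{2c-\epsilon} - \sqrt{2c}\bigr) + 4\sqrt{2c-\epsilon}\bigl(J_\epsilon(c) - 1\bigr)\]
and bound each summand. The first is $O(\epsilon)$ uniformly in $c$ by the mean value theorem, using $\sqrt{2c} \geq \sqrt{\epsilon_0}$. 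For the second, using the elementary inequality $1 - \sqrt{1-y} \leq y$ for $y \in [0,1]$, one obtains
\[0 \leq 1 - J_\epsilon(c) \leq \Bigl(1 - \sqrt{1 - \tfrac{\epsilon}{2c}}\Bigr) + \frac{\epsilon}{2c-\epsilon} \int_0^{\sqrt{1-\epsilon/(2c)}} \frac{x^2}{1-x^2}\, dx.\]
The first term is bounded by $\epsilon/\epsilon_0$. For the second, the elementary primitive $\int x^2/(1-x^2)\, dx = \tfrac{1}{2}\log\bigl((1+x)/(1-x)\bigr) - x$, together with the estimate $1 - \sqrt{1-\epsilon/(2c)} \geq \epsilon/(4M)$, gives an $O\bigl(\log(1/\epsilon)\bigr)$ bound on the integral. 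Putting the pieces together yields $|I_\epsilon(c) - I_0(c)| \leq C\, \epsilon\, \log(1/\epsilon)$ uniformly on $K \cap [\epsilon_0/2, +\infty[$, which tends to $0$ as $\epsilon \to 0^+$.

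The main obstacle is controlling the integrand $1 - g_\epsilon(x,c)$ near the upper limit $x = \sqrt{1 - \epsilon/(2c)}$: the pointwise bound $1 - g_\epsilon \leq \epsilon x^2/((2c-\epsilon)(1-x^2))$ has a logarithmic singularity at $x = 1$, and one has to verify that the logarithmic divergence created when the upper limit approaches $1$ is absorbed by the explicit factor of $\epsilon$. Once this is done, uniformity in $c$ on the compact set $K \cap [\epsilon_0/2, +\infty[$ is automatic, since all relevant quantities are bounded below and above by positive constants depending only on $\epsilon_0$ and $M$.
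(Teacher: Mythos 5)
Your proof is correct, but it reaches the uniform statement by a genuinely different route than the paper. For the pointwise limit the two arguments are essentially the same computation from \eqref{eq:1} (the paper invokes continuous dependence on $\epsilon$ and evaluates the limiting integral; you justify the same passage to the limit by dominated convergence). For uniformity, the paper argues softly: by the monotonicity in $\epsilon$ established in the proof of Proposition \ref{prop:decreasing_epsilon}, the functions $I_{\epsilon_k}$ converge monotonically to the continuous limit $I_0$ on the compact set $K\cap\left[\frac{\epsilon_0}{2},+\infty\right[$, and Dini's theorem gives uniform convergence along the sequence. You instead prove a quantitative bound $\lvert I_\epsilon(c)-I_0(c)\rvert\le C\,\epsilon\log(1/\epsilon)$ with $C$ depending only on $\epsilon_0$ and $M=\sup K$, by splitting off the factor $\sqrt{2c-\epsilon}$ and taming the logarithmic singularity of $\int_0^b \frac{x^2}{1-x^2}\,dx$ with the explicit primitive and the lower bound $1-b\ge \frac{\epsilon}{4M}$; your auxiliary estimates ($2c-\epsilon\ge\epsilon_0/2$ on the stated range for $\epsilon\le\epsilon_0/2$, $1-\sqrt{1-y}\le y$, and the $O(\epsilon)$ bound on $\sqrt{2c-\epsilon}-\sqrt{2c}$) all check out, and restricting to small $\epsilon$ is harmless since only the tail of the sequence matters. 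What your approach buys is independence from Proposition \ref{prop:decreasing_epsilon} and from Dini's theorem, plus a stronger conclusion: uniform convergence in the continuous parameter $\epsilon$ with an explicit rate, rather than only along decreasing sequences. What the paper's approach buys is brevity: it needs no estimate near the turning point $x=\sqrt{1-\epsilon/(2c)}$ at all, at the cost of leaning on monotonicity and a compactness argument.
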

\begin{proof}
  Fix $c >0$. Then $c$ is in the domain of $I_{\epsilon}$ for all
  $\epsilon$ sufficiently small; therefore, it makes sense to consider $\lim\limits_{\epsilon \to
    0^+} I_{\epsilon}(c)$. The result follows from observing
  that $I_{\epsilon}$ depends continuously on $\epsilon$; thus equation
  \eqref{eq:1} yields that
  $$ \lim\limits_{\epsilon \to
    0^+} I_{\epsilon}(c) = 4 \int\limits_0^1 \sqrt{2c} \, d x
  = 4 \sqrt{2c}. $$
  \noindent
  This proves the first assertion. To prove the second, fix $\epsilon_0$, a decreasing sequence $\epsilon_k$ converging
  to $0$ and a compact set $K$ as in the statement. Then $K':= K \cap
  \left[\frac{\epsilon_0}{2},+\infty\right[$ is compact, the family of functions
  $\left\{I_{\epsilon_k}|_{K'}\right\}_n$ is monotone (see the proof
  of Proposition \ref{prop:decreasing_epsilon}), and the function
  $I_0|_{K'}$ is continuous. The result then follows by Dini's theorem.
\end{proof}


\subsubsection{The general case}\label{sec:general-case}
For any $n \geq 1$, consider the family of smooth maps
$\left\{ \Phi_{\epsilon} : B^n_{\infty} \times_L \R^n \to \R^n \right\}_{\epsilon > 0}$, where
\begin{equation}
  \label{eq:12}
  \Phi_{\epsilon}\left(\mathbf{x},\mathbf{y}\right) =
  \left(H_{\epsilon}(x_1,y_1), \ldots, H_{\epsilon}(x_n,y_n)\right),
\end{equation}
\noindent
and $H_{\epsilon}: B^1_{\infty} \times_L \R \to \R$ is the smooth function introduced in Section
\ref{sec:billiard-interval}. Viewing $B^n_{\infty} \times_L \R^n$ as
the symplectic product of $n$ copies of $B^1_{\infty} \times_L \R$, it
follows from the construction \ref{item:19} in Example \ref{exm:is}
that, for each $\epsilon >0$, $\Phi_{\epsilon} : B^n_{\infty} \times_L
\R^n \to \R^n $ is an integrable system. In fact, much more is true.


\begin{cor}\label{cor:glob_sing_general}
  For any $\epsilon > 0$, there exist a diffeomorphism
  $\mathbf{I}_{\epsilon} : \left[\frac{\epsilon}{2},+\infty
  \right[^n \to \left[0,+\infty\right[^n$ and a symplectomorphism
  $\boldsymbol{\Psi}_{\epsilon} : B^n_{\infty} \times_L \R^n \to
  \left(\R^{2n} , \omega_0\right)$ such that
  $\left(\boldsymbol{\Psi}_{\epsilon}, \mathbf{I}_{\epsilon}\right)$
  is an isomorphism between $\Phi_{\epsilon} : B^n_{\infty} \times_L
  \R^n \to \R^n$ and $\boldsymbol{\mu} :
  \left(\R^{2n},\sum\limits_{i=1}^n d u_i \wedge dv_i\right) \to
  \R^n$, where $\boldsymbol{\mu}(\mathbf{u},\mathbf{v}) =
  \pi \left(u_1^2 + v_1^2,\ldots,u^2_n + v^2_n\right)$. In particular, $\mathbf{I}_{\epsilon}
  \circ \Phi_{\epsilon}$ is the moment map of an effective Hamiltonian
  $\mathbb{T}^n$-action on $B^n \times_L \R^n$. Moreover, the family $\left\{\boldsymbol{\Psi}_{\epsilon}\right\}_{\epsilon > 0}$ may be chosen to depend continuously
  on $\epsilon$.
\end{cor}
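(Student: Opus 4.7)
The plan is to obtain the $n$-dimensional statement by taking the $n$-fold product of the one-dimensional constructions from Corollary \ref{cor:glob_sing}, exploiting the fact that $\Phi_\epsilon$ is by definition the product integrable system built from $n$ copies of $H_\epsilon$, and that both $\boldsymbol{\mu}: (\R^{2n},\omega_0) \to \R^n$ and $\omega_0 = \sum_{i=1}^n du_i \wedge dv_i$ split as the product of $n$ copies of $\mu: (\R^2, du\wedge dv) \to \R$ in the sense of Example \ref{exm:is}\ref{item:19}.

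Concretely, for a fixed $\epsilon > 0$, I would set
\[
\boldsymbol{\Psi}_\epsilon := \Psi_\epsilon \times \cdots \times \Psi_\epsilon \quad\text{and}\quad \mathbf{I}_\epsilon := I_\epsilon \times \cdots \times I_\epsilon
\]
where $(\Psi_\epsilon, I_\epsilon)$ is the isomorphism supplied by Corollary \ref{cor:glob_sing}. By Remark \ref{rmk:im_prod}, the pair $(\boldsymbol{\Psi}_\epsilon, \mathbf{I}_\epsilon)$ is an isomorphism between $\Phi_\epsilon$ and the product integrable system $\mu \times \cdots \times \mu$, and the latter is precisely $\boldsymbol{\mu}: (\R^{2n},\omega_0) \to \R^n$ since the symplectic form on $\R^{2n}$ is exactly the direct sum of the forms on the factors. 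That $\mathbf{I}_\epsilon$ is a diffeomorphism from $[\epsilon/2,+\infty[^n$ onto $[0,+\infty[^n$ is immediate from Lemma \ref{lemma:action} applied to each factor.

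For the moment map assertion, once $(\boldsymbol{\Psi}_\epsilon, \mathbf{I}_\epsilon)$ is an isomorphism of integrable systems onto $\boldsymbol{\mu}$, the composition $\mathbf{I}_\epsilon \circ \Phi_\epsilon = \boldsymbol{\mu} \circ \boldsymbol{\Psi}_\epsilon$ inherits the Hamiltonian $\mathbb{T}^n$-action structure from $\boldsymbol{\mu}$ via pullback by the symplectomorphism $\boldsymbol{\Psi}_\epsilon$; effectiveness is preserved because each factor carries an effective $S^1$-action (again by Corollary \ref{cor:glob_sing}) and the product of $n$ effective $S^1$-actions on $n$ independent factors is an effective $\mathbb{T}^n$-action. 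Finally, continuous dependence of $\boldsymbol{\Psi}_\epsilon$ on $\epsilon$ is inherited componentwise from the corresponding property of $\Psi_\epsilon$ provided by Corollary \ref{cor:glob_sing}.

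There is no substantive obstacle here: the work has already been done in the one-dimensional case and the proof is essentially a bookkeeping exercise using the compatibility of the product construction of integrable systems with isomorphisms (Remark \ref{rmk:im_prod}). The only point that deserves explicit mention is the identification of the product of $n$ copies of $(\R^2, du\wedge dv, \mu)$ with $(\R^{2n}, \omega_0, \boldsymbol{\mu})$, which is tautological from the definitions.
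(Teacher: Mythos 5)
Your proposal is correct and is essentially the paper's own proof: the paper also defines $\boldsymbol{\Psi}_{\epsilon}$ and $\mathbf{I}_{\epsilon}$ componentwise from the one-dimensional pair $\left(\Psi_{\epsilon}, I_{\epsilon}\right)$ of Corollary \ref{cor:glob_sing} and concludes via Remark \ref{rmk:im_prod}. Your additional remarks on effectiveness of the product $\mathbb{T}^n$-action and on the diffeomorphism property via Lemma \ref{lemma:action} are consistent with, and merely spell out, what the paper leaves implicit.
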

\begin{proof}
  Setting $\mathbf{I}_{\epsilon}\left(\mathbf{c}\right):=
  \left(I_{\epsilon}(c_1),\ldots,I_{\epsilon}(c_n)\right)$ and
  $\boldsymbol{\Psi}_{\epsilon}\left(\mathbf{x},\mathbf{y}\right)
  :=\left(\Psi_{\epsilon}(x_1,y_1),\ldots,
    \Psi_{\epsilon}(x_n,y_n)\right)$, where
  $I_{\epsilon}$ and $\Psi_{\epsilon}$ are as in equation \eqref{eq:1}
  and Corollary
  \ref{cor:glob_sing}
  respectively, the desired result follows from Remark
  \ref{rmk:im_prod} and Corollary \ref{cor:glob_sing}.
\end{proof}


Lemma \ref{lemma:limit} and Corollary \ref{cor:glob_sing_general}
imply that the family of diffeomorphisms
$\left\{\mathbf{I}_{\epsilon}\right\}_{\epsilon > 0}$ converges
uniformly as $\epsilon$ goes to 0.

\begin{cor}\label{cor:limit_general_case}
  For any $\mathbf{c} \in \left]0,+\infty\right[^n$, $\lim\limits_{\epsilon
    \to 0^+} \mathbf{I}_{\epsilon}\left(\mathbf{c}\right) =
  \mathbf{I}_0\left(\mathbf{c}\right)$, where
  $\mathbf{I}_0\left(\mathbf{c}\right):=\left(I_0(c_1),\ldots,I_0(c_n)\right)$
  and $I_0(c) = 4\sqrt{2c}$. Moreover, for any $\epsilon_0 > 0$, any decreasing sequence $\epsilon_k$
  converging to 0 with $\epsilon_0 > \epsilon_1$, and any compact
  subset $K \subset \R^n_{\geq 0}$, $\mathbf{I}_{\epsilon_k} \to
  \mathbf{I}_0$ uniformly in $K \cap \left[\frac{\epsilon_0}{2},+\infty\right[^n$.
\end{cor}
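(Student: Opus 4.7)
The plan is to deduce both statements directly from the one-dimensional version, Lemma \ref{lemma:limit}, by exploiting the product structure $\mathbf{I}_\epsilon(\mathbf{c}) = (I_\epsilon(c_1),\ldots,I_\epsilon(c_n))$ furnished by the proof of Corollary \ref{cor:glob_sing_general}. Because $\mathbf{I}_\epsilon$ acts coordinatewise and $\mathbf{I}_0$ is defined coordinatewise as the componentwise limit, the whole proof reduces to applying the $n=1$ statements to each slot.

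For the pointwise assertion, I would fix $\mathbf{c} = (c_1,\ldots,c_n) \in \,]0,+\infty[^n$ and set $\epsilon^\ast := 2\min_i c_i > 0$, so that every $c_i$ lies in the domain $\bigl[\frac{\epsilon}{2},+\infty\bigr[$ of $I_\epsilon$ as soon as $\epsilon < \epsilon^\ast$. Applying Lemma \ref{lemma:limit} coordinate by coordinate, $I_\epsilon(c_i) \to 4\sqrt{2c_i} = I_0(c_i)$ for each $i$, and assembling the $n$ components yields $\mathbf{I}_\epsilon(\mathbf{c}) \to \mathbf{I}_0(\mathbf{c})$.

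For the uniform assertion, fix $\epsilon_0 > 0$, a decreasing sequence $\epsilon_k \to 0$ with $\epsilon_0 > \epsilon_1$, and a compact subset $K \subset \R^n_{\geq 0}$. Set $K' := K \cap \bigl[\frac{\epsilon_0}{2},+\infty\bigr[^n$, which is compact, and for each $i=1,\ldots,n$ let $K_i := \mathrm{pr}_i(K') \subset \bigl[\frac{\epsilon_0}{2},+\infty\bigr[$, which is compact since continuous images of compacta are compact. By Lemma \ref{lemma:limit}, $I_{\epsilon_k}|_{K_i} \to I_0|_{K_i}$ uniformly on $K_i$, so for every $\mathbf{c} \in K'$,
\[
\|\mathbf{I}_{\epsilon_k}(\mathbf{c}) - \mathbf{I}_0(\mathbf{c})\|_{\infty} = \max_{1 \leq i \leq n} |I_{\epsilon_k}(c_i) - I_0(c_i)| \leq \max_{1 \leq i \leq n} \sup_{c \in K_i} |I_{\epsilon_k}(c) - I_0(c)|,
\]
and the right-hand side tends to $0$ as $k \to \infty$.

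I do not foresee any real obstacle: once the product decomposition of $\mathbf{I}_\epsilon$ is in place, the statement is essentially a formal consequence of the one-dimensional Lemma \ref{lemma:limit}. The only mildly non-trivial point is that $K$ is not assumed to be a product, which is handled by replacing it with the (compact) product of its coordinate projections.
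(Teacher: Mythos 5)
Your proposal is correct and follows essentially the same route as the paper: the pointwise statement is reduced coordinatewise to Lemma \ref{lemma:limit} via the product form of $\mathbf{I}_{\epsilon}$, and the uniform statement is handled exactly as in the paper by replacing $K$ (without loss of generality) with the compact product of its coordinate projections. Your explicit estimate of the sup norm over $K'$ by the maxima over the $K_i$ just spells out the detail the paper leaves implicit.
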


\begin{proof}
  The first statement is an immediate consequence of Lemma
  \ref{lemma:limit} and Corollary \ref{cor:glob_sing_general}. The
  second statement follows similarly upon
  observing that, without loss of generality, it may be assumed that
  $K$ is of the form $K_1 \times \ldots \times K_n \subset \R_{\geq 0}
  \times \ldots \times \R_{\geq 0} = \R^n_{\geq 0}$, where, for each
  $i = 1,\ldots,n$, $K_i \subset  \R_{\geq 0}$ is compact.
\end{proof}

\subsection{Constructing the symplectomorphism}\label{sec:constr-sympl}
The aim of this section is to prove Theorem \ref{thm:sympl}, which endows any lagrangian product of the form
$B^n_\infty \times_L A$, where $A \subset \R^n$ is a balanced region (see
Definition \ref{def:sym}), with an effective
Hamiltonian $\mathbb{T}^n$-action. As a first step, we construct a suitable
compact exhaustion of any lagrangian product of the above form (see
Step \ref{item:16}). Henceforth, given $B \subset \R^l$, we denote
its closure by $\mathrm{cl}\left(B\right)$.

\begin{lemma}\label{lemma:family}
  For any balanced region $A \subset \R^n$, there exists a family of
  symplectic submanifolds
  $\left\{P_{\epsilon}\right\}_{\epsilon >0}$ of $B^n_{\infty}
  \times_L A$, with compact closure  in $B^n_{\infty} \times_L A$, satisfying the following properties:
  \begin{enumerate}[label = (\alph*), ref = (\alph*), leftmargin=*]
  \item \label{item:7} $\bigcup\limits_{\epsilon >0} \mathrm{cl}\left(P_{\epsilon}\right) = B^n_{\infty}
    \times_L A$ and $\bigcup\limits_{\epsilon >0}
    \boldsymbol{\Psi}_{\epsilon}\left(\mathrm{cl}\left(P_{\epsilon}\right)\right) =
    X_{4|A|}$;
  \item \label{item:6} if $\epsilon_1 > \epsilon_2$, then
    $\mathrm{cl}\left(P_{\epsilon_1}\right) \subset \mathrm{cl}\left(P_{\epsilon_2}\right)$ and
    $\boldsymbol{\Psi}_{\epsilon_1}\left(\mathrm{cl}\left(P_{\epsilon_1}\right)\right)
    \subset \boldsymbol{\Psi}_{\epsilon_2}\left(\mathrm{cl}\left(P_{\epsilon_2}\right)\right)$,
  \end{enumerate}
  where $4|A| \subset \R_{\geq 0}^n$ is as in Section \ref{sec:tor}, and $\left\{\boldsymbol{\Psi}_{\epsilon} :
  B^n_{\infty} \times_L \Sigma \to \R^{2n}\right\}_{\epsilon >0}$ is
  the family of symplectomorphisms of Corollary
  \ref{cor:glob_sing_general} depending continuously on $\epsilon$.
\end{lemma}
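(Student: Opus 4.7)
\textbf{Plan for Lemma~\ref{lemma:family}.} The idea is to use the limit map $\mathbf{I}_0$ of Corollary~\ref{cor:limit_general_case} to make domain-side nesting automatic, while controlling the image side via the identity $\boldsymbol{\mu} \circ \boldsymbol{\Psi}_\epsilon = \mathbf{I}_\epsilon \circ \Phi_\epsilon$. Choose a decreasing function $\delta : (0,\infty) \to (0,1)$ with $\delta(\epsilon) \searrow 0$ as $\epsilon \to 0^+$ (to be specified below) and set $W_\epsilon := (1-\delta(\epsilon)) \cdot 4|A|$. Since $|A|$ is balanced, $(1-r) \cdot \mathrm{cl}(|A|) \subset |A|$ for every $r \in (0,1]$, so each $\mathrm{cl}(W_\epsilon)$ is a compact subset of $4|A|$, and the family $\{W_\epsilon\}$ is downward closed, nested ($W_{\epsilon_1} \subset W_{\epsilon_2}$ when $\epsilon_1 > \epsilon_2$), and exhausts $4|A|$. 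Define
\[P_\epsilon := (\mathbf{I}_0 \circ \Phi_\epsilon)^{-1}(W_\epsilon).\]
Writing $\mathbf{I}_0(c) = (4\sqrt{2 c_i})_i$ and using $\sqrt{y_i^2 + \epsilon/(1-x_i^2)} \geq |y_i|$ together with the downward closedness of $W_\epsilon$, the condition $(x,y) \in P_\epsilon$ gives $(4|y_i|)_i \in W_\epsilon \subset 4|A|$, so $y \in A$. The boundedness of $W_\epsilon$ forces $\epsilon/(1-x_i^2)$ to be bounded, hence $|x_i|$ uniformly less than $1$. Combined with the properness of $\Phi_\epsilon$ from Proposition~\ref{prop:basic_is}, this makes $\mathrm{cl}(P_\epsilon)$ a compact subset of $B^n_\infty \times_L A$.

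The domain-side conclusions of~\ref{item:7} and~\ref{item:6} then follow quickly. For $\epsilon_1 > \epsilon_2$, one has $H_{\epsilon_2}(x_i,y_i) \leq H_{\epsilon_1}(x_i,y_i)$ pointwise; componentwise monotonicity of $\mathbf{I}_0$, together with $W_{\epsilon_1} \subset W_{\epsilon_2}$ and the downward closedness of $W_{\epsilon_2}$, yields $P_{\epsilon_1} \subset P_{\epsilon_2}$ (and similarly for closures). For domain exhaustion, any $(x_0,y_0) \in B^n_\infty \times_L A$ satisfies $\mathbf{I}_0 \circ \Phi_\epsilon(x_0,y_0) \to (4|y_{0,i}|)_i$, which lies strictly inside $4|A|$, so it belongs to $W_\epsilon$ for all sufficiently small $\epsilon$.

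For the image side, the identity $\boldsymbol{\mu} \circ \boldsymbol{\Psi}_\epsilon = \mathbf{I}_\epsilon \circ \Phi_\epsilon$ gives $\boldsymbol{\Psi}_\epsilon(P_\epsilon) = X_{\hat W_\epsilon}$, where $\hat W_\epsilon := \mathbf{I}_\epsilon(\mathbf{I}_0^{-1}(W_\epsilon))$. Nesting $\hat W_{\epsilon_1} \subset \hat W_{\epsilon_2}$ follows from $\mathbf{I}_{\epsilon_2}^{-1} \leq \mathbf{I}_{\epsilon_1}^{-1}$ (a consequence of Proposition~\ref{prop:decreasing_epsilon}) together with the downward closedness of $W_{\epsilon_2}$; and the inclusion $W_\epsilon \subset \hat W_\epsilon$ (immediate from $\mathbf{I}_0 \leq \mathbf{I}_\epsilon$ componentwise and the downward closedness of $W_\epsilon$) produces the image exhaustion. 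The main obstacle is the containment $\hat W_\epsilon \subset 4|A|$: by Corollary~\ref{cor:limit_general_case}, the composition $\mathbf{I}_0 \circ \mathbf{I}_\epsilon^{-1}$ tends to the identity uniformly on compact subsets of $[\epsilon_0/2,\infty)^n$ for any fixed $\epsilon_0 > 0$, and choosing $\delta(\epsilon)$ to decay to $0$ more slowly than this uniform rate—after separating the case where points of $\hat W_\epsilon$ are near the origin (where containment is automatic, since $|A|$ contains a neighborhood of the origin) from the case where they are bounded away from it—gives the required containment. This balancing of the two scales $\delta(\epsilon)$ and the convergence rate $\|\mathbf{I}_\epsilon-\mathbf{I}_0\|_{C^0}$ is the principal delicate point of the proof.
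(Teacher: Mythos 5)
There is a genuine gap, and it comes from a reversed inequality. For fixed $c$, the action $I_{\epsilon}(c)$ is a \emph{decreasing} function of $\epsilon$ whose limit as $\epsilon \to 0^+$ is $I_0(c)=4\sqrt{2c}$ (Proposition \ref{prop:decreasing_epsilon} and Lemma \ref{lemma:limit}); hence $\mathbf{I}_{\epsilon} \leq \mathbf{I}_0$ componentwise, not $\mathbf{I}_0 \leq \mathbf{I}_{\epsilon}$ as you assert. This reversal has two consequences for your proposal. First, the step you call ``the main obstacle'', namely $\hat W_\epsilon = \mathbf{I}_{\epsilon}\left(\mathbf{I}_0^{-1}\left(W_\epsilon\right)\right) \subset 4|A|$, is in fact immediate: since $\mathbf{I}_{\epsilon}\circ \mathbf{I}_0^{-1} \leq \mathrm{id}$ componentwise and $W_\epsilon$ satisfies the box condition \eqref{eq:cond}, one gets $\hat W_\epsilon \subset W_\epsilon \subset 4|A|$ with no shrinking factor $\delta(\epsilon)$ and no balancing against the convergence rate at all -- this is exactly how the paper handles this inclusion, working directly with $W_\epsilon = 4|A|$. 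Second, and more seriously, your route to the image exhaustion collapses: the inclusion $W_\epsilon \subset \hat W_\epsilon$ that you call ``immediate'' is false (the opposite inclusion holds), so as written you have no argument that $\bigcup_{\epsilon>0}\boldsymbol{\Psi}_{\epsilon}\left(\mathrm{cl}\left(P_{\epsilon}\right)\right) \supset X_{4|A|}$.

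That exhaustion is the real delicate point, and it cannot be obtained from monotone comparison alone; it needs the uniform convergence $\mathbf{I}_{\epsilon} \to \mathbf{I}_0$ of Corollary \ref{cor:limit_general_case}. The paper's argument is: fix $\mathbf{z} \in X_{4|A|}$, consider the preimages $\boldsymbol{\Psi}_{\epsilon_k}^{-1}\left(\mathbf{z}\right)$ along a sequence $\epsilon_k \downarrow 0$, show that the values $\mathbf{c}_k = \Phi_{\epsilon_k}\left(\boldsymbol{\Psi}_{\epsilon_k}^{-1}\left(\mathbf{z}\right)\right)$ are componentwise monotone and bounded (using the strict monotonicity $I_{\epsilon_k}(c) < I_{\epsilon_l}(c)$ for $\epsilon_k > \epsilon_l$), and then use uniform convergence to conclude $\mathbf{I}_0\left(\mathbf{c}_k\right) \to \boldsymbol{\mu}\left(\mathbf{z}\right) \in 4|A|$, so that the preimage eventually lies in $P_{\epsilon_k}$ by openness of $4|A|$. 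An argument of this type (with the extra, harmless requirement $\boldsymbol{\mu}\left(\mathbf{z}\right) \in \left(1-\delta(\epsilon_k)\right)\cdot 4|A|$ for large $k$) would also repair your version; but without it, and with the inequality in the wrong direction, your proposal does not establish property (a) of the lemma. The remaining parts of your construction -- compactness of $\mathrm{cl}\left(P_{\epsilon}\right)$ inside $B^n_\infty \times_L A$, the domain-side nesting and exhaustion, and the image-side nesting via $\mathbf{I}_{\epsilon_2}^{-1} \leq \mathbf{I}_{\epsilon_1}^{-1}$ together with \eqref{eq:cond} -- are correct and closely parallel the paper's proof.
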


\begin{proof}
  Fix a balanced region $A \subset \R^n$. For any $\epsilon > 0$, let $\Phi_{\epsilon}
  : B^n_{\infty} \times_L \R^n \to \R^n$ be the integrable system
  defined by equation \eqref{eq:12}. For $\epsilon > 0$, set
  $$ P_{\epsilon}:=
  \Phi_{\epsilon}^{-1}\left(\mathbf{I}^{-1}_0\left(4|A|\right)\right)
  \subset B^n_{\infty} \times_L \R^n, $$
  \noindent
  where $\mathbf{I}_0 : \R^n_{\geq 0} \to \R^n_{\geq 0}$ is the
  map of Corollary \ref{cor:limit_general_case}.
  The claim is that $\left\{P_{\epsilon}\right\}_{\epsilon > 0}$ is
  the required family. Begin by observing that, since $A$ is open, so
  is $4|A| \subset \R^n_{\geq 0}$. Continuity of $\Phi_{\epsilon}$ for
  any $\epsilon >0$ and of $\mathbf{I}_0$ implies that, for each
  $\epsilon >0$, $P_{\epsilon}$ is an
  open subset of $B^n_{\infty} \times_L \R^n$ and, thus, a symplectic
  submanifold of $B^n_{\infty} \times_L \R^n$. For each $\epsilon >
  0$, the closure of $P_{\epsilon}$ is mapped to the closure of $4|A|$
  in $\R^n_{\geq 0}$ under $\mathbf{I}_0 \circ \Phi_{\epsilon}$. Since $A$ is bounded,
  so is $4|A|$ is bounded, which implies that $\mathrm{cl}\left(4|A|\right)
  \subset \R^n_{\geq 0}$ is compact. Moreover, the maps
  $\Phi_{\epsilon}$ and $\mathbf{I}_0$ are proper, the former by
  Property \ref{item:3} of Proposition \ref{prop:basic_is} and by
  construction, while the latter by virtue of being a homeomorphism
  onto a closed subset of $\R^n$. Therefore, for each $\epsilon >0$,
  the closure of $P_{\epsilon}$ is contained in a compact subset and
  is, therefore, compact. To simplify the
  argument of the rest of the proof, we deal with each statement separately.

  \begin{claim}\label{claim:union_P}
    $\bigcup\limits_{\epsilon >0} \mathrm{cl}\left(P_{\epsilon}\right) = B^n_{\infty}
    \times_L A$.
  \end{claim}
  \begin{proof}[Proof of Claim \ref{claim:union_P}]
    Fix $\epsilon >0$ and let
  $\left(\mathbf{x},\mathbf{y}\right) \in \mathrm{cl}\left(P_{\epsilon}\right)$. By
  definition, $\mathbf{x} \in B^n_{\infty}$ and
  $\mathbf{I}_0
  \left(\Phi_{\epsilon}\left(\mathbf{x},\mathbf{y}\right)\right) \in
  \mathrm{cl}\left(4|A|\right)$. Using the definition of $\mathbf{I}_0$ and
  $\Phi_{\epsilon}$, the latter condition gives that
  \begin{equation}
    \label{eq:13}
    \left(4\sqrt{y^2_1 + \frac{\epsilon}{1-x^2_1}}, \ldots,
      4\sqrt{y^2_n + \frac{\epsilon}{1-x^2_n}}\right) \in \mathrm{cl}\left(4|A|\right).
  \end{equation}
  \noindent
  However, since $4|A|$ satisfies \eqref{eq:cond}, equation
  \eqref{eq:13} implies that
  $$ \left[0,4\sqrt{y^2_1 + \frac{\epsilon}{1-x^2_1}}\right[ \times
  \ldots \times  \left[0,4\sqrt{y^2_n +
      \frac{\epsilon}{1-x^2_n}}\right[ \subset 4|A|,$$
  \noindent
  which, in particular, yields that $\left(4\lvert y_1\rvert, \ldots,
    4\lvert y_n\rvert\right) \in 4|A|$. By definition of
  $4|A|$, this last condition gives that $\mathbf{y} \in
  A$. Thus $\left(\mathbf{x},\mathbf{y}\right) \in
  B^n_{\infty} \times_L A$; since
  $\left(\mathbf{x},\mathbf{y}\right) \in \mathrm{cl}\left(P_{\epsilon}\right)$ and $\epsilon
  > 0$ are arbitrary,
  for all $\epsilon > 0$, $\mathrm{cl}\left(P_{\epsilon}\right) \subset B^n_{\infty} \times_L
  A$. Hence, for all $\epsilon > 0$, $P_{\epsilon}$ is a
  symplectic submanifold of $B^n_{\infty} \times_L A$ with compact
  closure in $B^n_\infty \times_L A$,
  and $\bigcup\limits_{\epsilon >0} \mathrm{cl}\left(P_{\epsilon}\right) \subset B^n_{\infty}
  \times_L A$.

  It remains to prove the opposite inclusion. Suppose that
  $\left(\mathbf{x},\mathbf{y}\right) \in B^n_{\infty} \times_L
  A$. Then, by definition, $\left(4\lvert y_1\rvert, \ldots,
    4\lvert y_n\rvert\right) \in 4|A|$; since $4|A| \subset
  \R^n_{\geq 0}$ is open, for all sufficiently small $\epsilon > 0$,
  $\left(4\sqrt{y^2_1 + \frac{\epsilon}{1-x^2_1}}, \ldots,
    4\sqrt{y^2_n + \frac{\epsilon}{1-x^2_n}}\right) \in 4|A|$, which
  is equivalent to $\left(\mathbf{x},\mathbf{y}\right) \in
  P_{\epsilon}$. Since $\left(\mathbf{x},\mathbf{y}\right) \in
  B^n_{\infty} \times_L A$ is arbitrary, this gives that $B^n_{\infty}
  \times_L A \subset \bigcup\limits_{\epsilon >0}
  P_{\epsilon} \subset \bigcup\limits_{\epsilon >0}
  \mathrm{cl}\left(P_{\epsilon}\right)$.
  \end{proof}

  \begin{claim}\label{claim:union_images}
    $\bigcup\limits_{\epsilon >0}
    \boldsymbol{\Psi}_{\epsilon}\left(\mathrm{cl}\left(P_{\epsilon}\right)\right) =
    X_{4|A|}$.
  \end{claim}
  \begin{proof}[Proof of Claim \ref{claim:union_images}]
    Fix $\epsilon > 0$; firstly we show that
  $\boldsymbol{\Psi}_{\epsilon}\left(\mathrm{cl}\left(P_{\epsilon}\right)\right) \subset
  X_{4|A|}$. Since $X_{4|A|}$ is saturated with respect to
  $\boldsymbol{\mu}$, it suffices to prove that $\boldsymbol{\mu}
  \left(\boldsymbol{\Psi}_{\epsilon}\left(\mathrm{cl}\left(P_{\epsilon}\right)\right)\right)
  \subset \boldsymbol{\mu}\left(X_{4|A|}\right) = 4|A|$. By Corollary
  \ref{cor:glob_sing_general}, $\boldsymbol{\mu} \circ
  \boldsymbol{\Psi}_{\epsilon} = \mathbf{I}_{\epsilon} \circ
  \Phi_{\epsilon}$ and, by definition, $P_{\epsilon} =
  \Phi^{-1}_{\epsilon}\left(\mathbf{I}^{-1}_0\left(4|A|\right)\right)$,
  so that $\Phi_{\epsilon}\left(\mathrm{cl}\left(P_{\epsilon}\right)\right)
  \subset \mathbf{I}^{-1}_0\left(\mathrm{cl}\left(4|A|\right)\right)$;
  therefore it suffices to prove that $
  \mathbf{I}_{\epsilon}\left(\mathbf{I}_0^{-1}\left(\mathrm{cl}\left(4|A|\right)\right)\right)
  \subset 4|A|$. In fact, since the domain of
  $\mathbf{I}_{\epsilon}$ is
  $\left[\frac{\epsilon}{2},+\infty\right[^n$, it suffices to show
  that
  $$ \mathbf{I}_{\epsilon}\left(\mathbf{I}_0^{-1}\left(\mathrm{cl}\left(4|A|\right)\right)
    \cap \left[\frac{\epsilon}{2},+\infty\right[^n\right)
  \subset 4|A|. $$
  \noindent
  Suppose that $\mathbf{a} \in \mathrm{cl}\left(4|A|\right)$ is such that
  $\mathbf{I}^{-1}_0\left(\mathbf{a}\right) \in
  \left[\frac{\epsilon}{2},+\infty\right[^n$; the aim is to show that
  $\mathbf{I}_{\epsilon}\left(\mathbf{I}^{-1}_0\left(\mathbf{a}\right)\right)
  \in 4|A|$. Observe that, by definition of $\mathbf{I}_0$ (see Corollary \ref{cor:limit_general_case}),
  $\mathbf{I}_0^{-1}\left(\mathbf{a}\right) =
  \left(I_0^{-1}\left(a_1\right),\ldots,I_0^{-1}\left(a_n\right)\right)$;
  moreover, by definition of $\mathbf{I}_{\epsilon}$ (see the proof of
  Corollary \ref{cor:glob_sing_general}),
  $$\mathbf{I}_{\epsilon}\left(\mathbf{I}^{-1}_0\left(\mathbf{a}\right)\right)
  = \left(I_{\epsilon}\left(I_0^{-1}\left(a_1\right)\right),\ldots,
    I_{\epsilon}\left(I_0^{-1}\left(a_n\right)\right)\right).$$
  \noindent
  By assumption, for each $i =1,\ldots, n$, $I^{-1}_0(a_i) \geq
  \frac{\epsilon}{2}$. The definitions of $I_{\epsilon}$ and of $I_0$
  (see \eqref{eq:1} and Lemma \ref{lemma:limit}) imply that,
  for all $i=1,\ldots,n$, $I_{\epsilon}\left(I^{-1}_0(a_i)\right) <
  I_0 \left(I^{-1}_0(a_i)\right) = a_i$. In particular,
  $$
  \mathbf{I}_{\epsilon}\left(\mathbf{I}^{-1}_0\left(\mathbf{a}\right)\right)
  \in \left[0,a_1\right[ \times \ldots \times \left[0,a_n\right[; $$
  \noindent
  on the other hand, the right hand side of the above equation is a subset of $4|A|$
  since $\mathbf{a} \in \mathrm{cl}\left(4|A|\right)$ and $4|A|$ satisfies
  \eqref{eq:cond}. Thus
  $\mathbf{I}_{\epsilon}\left(\mathbf{I}^{-1}_0\left(\mathbf{a}\right)\right)
  \in 4|A|$; since $\mathbf{a} \in 4|A|$ is arbitrary, the above
  argument shows that $\mathbf{I}_{\epsilon}\left(\mathbf{I}_0^{-1}\left(\mathrm{cl}\left(4|A|\right)\right)\right)
  \subset 4|A|$ and, therefore, $\boldsymbol{\Psi}_{\epsilon}\left(\mathrm{cl}\left(P_{\epsilon}\right)\right) \subset
  X_{4|A|}$. Since $\epsilon >0$ is arbitrary, we have that $\bigcup\limits_{\epsilon >0}
  \boldsymbol{\Psi}_{\epsilon}\left(\mathrm{cl}\left(P_{\epsilon}\right)\right) \subset
  X_{4|A|}$.

  To prove the opposite inclusion, suppose that
  $\mathbf{z} \in X_{4|A|}$; it suffices to show that there exists
  $\epsilon > 0$ and $\left(\mathbf{x},\mathbf{y}\right) \in
  P_{\epsilon} \subset \mathrm{cl}\left(P_{\epsilon}\right)$ such that
  $\boldsymbol{\Psi}_{\epsilon}\left(\mathbf{x},\mathbf{y}\right) =
  \mathbf{z}$. By Corollary \ref{cor:glob_sing_general}, we know that,
  for any $\epsilon > 0$, there exists a unique point
  $\left(\mathbf{x}_{\epsilon},\mathbf{y}_{\epsilon}\right) \in
  B^n_{\infty} \times_L \R^n$ with
  $\boldsymbol{\Psi}_{\epsilon}\left(\mathbf{x}_{\epsilon},\mathbf{y}_{\epsilon}\right)
  = \mathbf{z}$. Hence, it suffices to show that, for some $\epsilon >
  0$, $\left(\mathbf{x}_{\epsilon},\mathbf{y}_{\epsilon}\right) \in
  P_{\epsilon}$, which is equivalent to
  $\mathbf{I}_0\left(\Phi_{\epsilon}\left(\mathbf{x}_{\epsilon},\mathbf{y}_{\epsilon}\right)\right)
  \in 4|A|$, since $P_{\epsilon}$ is saturated with respect to
  $\Phi_{\epsilon}$. To see that this holds, we argue as follows. Choose a
  decreasing sequence $\epsilon_k$ converging to 0 and, for any $k$, set
  $\mathbf{c}_k =\left(c_{k,1},\ldots,c_{k,n}\right):= \Phi_{\epsilon_k}\left(\mathbf{x}_{\epsilon_k},
    \mathbf{y}_{\epsilon_k}\right)$, and
  $\boldsymbol{\mu}\left(\mathbf{z}\right)=:\left(a_1,\ldots,a_n\right)$. By
  assumption, we have that, for all $k$ and all $i =1,\ldots,n$,
  $I_{\epsilon_k}\left(c_{k,i}\right) = a_i$. Let $l > k$ and suppose
  that there exists $i =1,\ldots, n$ such that $c_{l,i} >
  c_{k,i}$. Then
  \begin{equation}
    \label{eq:14}
    a_i = I_{\epsilon_k}\left(c_{k,i}\right) <
    I_{\epsilon_l}\left(c_{k,i}\right) <
    I_{\epsilon_l}\left(c_{l,i}\right) < a_i,
  \end{equation}
  \noindent
  where the first inequality follows from the fact that if $\epsilon >
  \epsilon'$ and
  $c \geq \frac{\epsilon}{2}$, then $I_{\epsilon}(c) <
  I_{\epsilon'}(c)$ (see the proof of Proposition
  \ref{prop:decreasing_epsilon}), while the second follows from the
  fact that $I_{\epsilon_l}$ is a strictly increasing function (see
  the proof of Lemma \ref{lemma:action}). The inequalities \eqref{eq:14}
  yield a contradiction; thus, for all $l > k$ and all
  $i = 1,\ldots, n$, $c_{l,i} \leq
  c_{k,i}$. Together with the fact that, for all $k$, $\mathbf{c}_k
  \in \R^n_{\geq 0}$, this fact implies that the sequence
  $\left\{\mathbf{c}_k \right\}_k \subset \R^n_{\geq 0}$ is
  bounded. Therefore, without loss of generality, it may be assumed
  that $\mathbf{c}_k \to \mathbf{c}_{\infty} \in \R^n_{\geq 0}$. Hence,
  \begin{equation}
    \label{eq:15}
    \lim\limits_{k \to + \infty}\left( \lim\limits_{j \to +\infty}
      \mathbf{I}_{\epsilon_j}\left(\mathbf{c}_k\right)\right) =
    \lim\limits_{k \to + \infty}\mathbf{I}_0
    \left(\mathbf{c}_k\right) = \mathbf{I}_0\left(\mathbf{c}_{\infty}\right),
  \end{equation}
  \noindent
  where the first equality follows from Corollary
  \ref{cor:limit_general_case} and the second from continuity of
  $\mathbf{I}_0$. On the other hand,
  \begin{equation}
    \label{eq:16}
    \lim\limits_{k \to +\infty}
    \mathbf{I}_{\epsilon_k}\left(\mathbf{c}_k \right) = \boldsymbol{\mu}\left(\mathbf{z}\right),
  \end{equation}
  \noindent
  since, by definition, for all $k$,
  $\mathbf{I}_{\epsilon_k}\left(\mathbf{c}_k \right) =
  \boldsymbol{\mu}\left(\mathbf{z}\right)$. Comparing equations
  \eqref{eq:15} and \eqref{eq:16}, we obtain that
  $\mathbf{I}_0\left(\mathbf{c}_{\infty}\right) = \boldsymbol{\mu}\left(\mathbf{z}\right)$.
  Since
  $4|A| \subset \R^n_{\geq 0}$ is open and
  $\boldsymbol{\mu}(\mathbf{z}) \in 4|A|$, there exists a $\delta >0$ such that if
  $\mathbf{a}' \in \R_{\geq 0}^n$ and
  $\left\|\boldsymbol{\mu}\left(\mathbf{z}\right) - \mathbf{a}'
  \right\| < \delta$, then $\mathbf{a}' \in 4|A|$. Choose $k$
  sufficiently large so that $\left\|
    \mathbf{I}_0\left(\mathbf{c}_{\infty}\right) -
    \mathbf{I}_0\left(\mathbf{c}_{k}\right)\right\| <
  \frac{\delta}{2}$; this can be achieved since $\mathbf{I}_0$ is
  continuous and $\mathbf{c}_k \to \mathbf{c}_{\infty}$ as $k \to
  + \infty$. Hence,
  \begin{equation}
    \label{eq:17}
    \left\| \boldsymbol{\mu}\left(\mathbf{z}\right) -
      \mathbf{I}_0\left(\Phi_{\epsilon_k}\left(\mathbf{x}_{\epsilon_k},\mathbf{y}_{\epsilon_k}\right)\right)
    \right\| = \left\|\mathbf{I}_0\left(\mathbf{c}_{\infty}\right) -
      \mathbf{I}_0\left(\mathbf{c}_{k}\right) \right\| < \frac{\delta}{2};
  \end{equation}
  \noindent
  moreover, by definition of $\mathbf{I}_0$,
  $\mathbf{I}_0\left(\Phi_{\epsilon_k}\left(\mathbf{x}_{\epsilon_k},\mathbf{y}_{\epsilon_k}\right)\right)
  \in \R^n_{\geq 0}$. Thus
  $\mathbf{I}_0\left(\Phi_{\epsilon_k}\left(\mathbf{x}_{\epsilon_k},\mathbf{y}_{\epsilon_k}\right)\right)
  \in 4|A|$ as desired, which, unraveling the above argument,
  implies that $X_{4|A|} \subset \bigcup\limits_{\epsilon >0}
  \boldsymbol{\Psi}_{\epsilon}\left(P_{\epsilon}\right) \subset \bigcup\limits_{\epsilon >0}
  \boldsymbol{\Psi}_{\epsilon}\left(\mathrm{cl}\left(P_{\epsilon}\right)\right)$ and completes
  the proof.
  \end{proof}

  Claims \ref{claim:union_P} and \ref{claim:union_images} yield that
  the family of symplectic submanifolds
  $\left\{P_{\epsilon}\right\}_{\epsilon >0}$ satisfies property
  \ref{item:7}.

  \begin{claim}\label{claim:domain_nested}
    If $\epsilon_1 > \epsilon_2$, then
    $\mathrm{cl}\left(P_{\epsilon_1}\right) \subset \mathrm{cl}\left(P_{\epsilon_2}\right)$.
  \end{claim}
  \begin{proof}[Proof of Claim \ref{claim:domain_nested}]
    Fix $\epsilon_1 > \epsilon_2$. It suffices to show that
    $P_{\epsilon_1} \subset P_{\epsilon_2}$. Fix
    $\left(\mathbf{x},\mathbf{y}\right) \in P_{\epsilon_1}$. By
    definition,
    $\mathbf{I}_0\left(\Phi_{\epsilon_1}\left(\mathbf{x},\mathbf{y}\right)\right)
    \in 4|A|$, {\it i.e.}
    $$ \left(4\sqrt{y^2_1 + \frac{\epsilon_1}{1-x^2_1}}, \ldots,
      4\sqrt{y^2_n + \frac{\epsilon_1}{1-x^2_n}}\right) \in 4|A|. $$
    \noindent
    On the other hand, observe that, since $\epsilon_1 > \epsilon_2$,
    for all $i=1,\ldots, n$,
    $$ \sqrt{y^2_1 + \frac{\epsilon_1}{1-x^2_1}} > \sqrt{y^2_1 +
      \frac{\epsilon_2}{1-x^2_1}}. $$
    \noindent
    Since $4|A|$ satisfies property \eqref{eq:cond}, arguing as in
    the proof of Claim \ref{claim:union_P}, we obtain that
    $$ \left(4\sqrt{y^2_1 + \frac{\epsilon_2}{1-x^2_1}}, \ldots,
      4\sqrt{y^2_n + \frac{\epsilon_2}{1-x^2_n}}\right) \in 4|A|, $$
    \noindent
    which gives that $\mathbf{I}_0\left(\Phi_{\epsilon_2}\left(\mathbf{x},\mathbf{y}\right)\right)
    \in 4|A|$. By definition of $P_{\epsilon_2}$,
    $\left(\mathbf{x},\mathbf{y}\right) \in P_{\epsilon_2}$. Since
    $\left(\mathbf{x},\mathbf{y}\right) \in P_{\epsilon_1}$ is
    arbitrary, this shows that $P_{\epsilon_1} \subset P_{\epsilon_2}$
    as desired.
  \end{proof}

  \begin{claim}\label{claim:images_nested}
    If $\epsilon_1 > \epsilon_2$, then $\boldsymbol{\Psi}_{\epsilon_1}\left(\mathrm{cl}\left(P_{\epsilon_1}\right)\right)
    \subset \boldsymbol{\Psi}_{\epsilon_2}\left(\mathrm{cl}\left(P_{\epsilon_2}\right)\right)$.
  \end{claim}
  \begin{proof}[Proof of Claim \ref{claim:images_nested}]
    Fix $\epsilon_1 > \epsilon_2$. Since, for $i=1,2$,
    $\boldsymbol{\Psi}_{\epsilon_i}$ is a homeomorphism, it suffices to show that $\boldsymbol{\Psi}_{\epsilon_1}\left(P_{\epsilon_1}\right)
    \subset \boldsymbol{\Psi}_{\epsilon_2}\left(P_{\epsilon_2}\right)$.
    As, for $i=1,2$, the subset
    $\boldsymbol{\Psi}_{\epsilon_i}\left(P_{\epsilon_i}\right)$ is
    saturated with respect to $\boldsymbol{\mu}$, in order to prove
    the desired result it suffices to show that $\boldsymbol{\mu}\left(\boldsymbol{\Psi}_{\epsilon_1}\left(P_{\epsilon_1}\right)\right)
    \subset
    \boldsymbol{\mu}\left(\boldsymbol{\Psi}_{\epsilon_2}\left(P_{\epsilon_2}\right)\right)$,
    which is
    equivalent to
    $\mathbf{I}_{\epsilon_1}\left(\Phi_{\epsilon_1}\left(P_{\epsilon_1}\right)\right)
    \subset
    \mathbf{I}_{\epsilon_2}\left(\Phi_{\epsilon_2}\left(P_{\epsilon_2}\right)\right)$
    in light of Corollary \ref{cor:glob_sing_general}. Observe that,
    for $i=1,2$, $\Phi_{\epsilon_i}\left(P_{\epsilon_i}\right) =
    \mathbf{I}_0^{-1}\left(4|A|\right) \cap
    \left[\frac{\epsilon_i}{2},+\infty\right[^n$; thus, since
    $\epsilon_1 > \epsilon_2$,
    $\Phi_{\epsilon_1}\left(P_{\epsilon_1}\right) \subset \Phi_{\epsilon_2}\left(P_{\epsilon_2}\right)$. Let $\mathbf{c}
    = \left(c_1,\ldots,c_n\right) \in
    \Phi_{\epsilon_1}\left(P_{\epsilon_1}\right) \subset
    \Phi_{\epsilon_2}\left(P_{\epsilon_2}\right)$; the fact that
    $4|A|$ satisfies property \eqref{eq:cond} implies that
    \begin{equation}
      \label{eq:18}
      \left[\frac{\epsilon_2}{2},c_1\right] \times \ldots \times
      \left[\frac{\epsilon_2}{2},c_n\right] \subset \Phi_{\epsilon_2}\left(P_{\epsilon_2}\right).
    \end{equation}
    \noindent
    For, the condition $\mathbf{c} \in
    \Phi_{\epsilon_2}\left(P_{\epsilon_2}\right)$ implies that
    $\mathbf{I}_0\left(\mathbf{c}\right) = \left(4\sqrt{2c_1},\ldots,
      4\sqrt{2c_n}\right) \in 4|A|$. Since $4|A|$ satisfies
    property \eqref{eq:cond}, then
    $$ \left[0,4\sqrt{2c_1}\right] \times \ldots
    \left[0,4\sqrt{2c_n}\right] \subset 4|A|.$$
    \noindent
    Thus
    \begin{equation}
      \label{eq:19}
      \mathbf{I}_0^{-1}\left(\left[0,4\sqrt{2c_1}\right] \times \ldots
        \left[0,4\sqrt{2c_n}\right]\right) \subset
      \mathbf{I}_0^{-1}\left(4|A|\right);
    \end{equation}
    \noindent
    however, by definition of $\mathbf{I}_0$ (see Corollary \ref{cor:limit_general_case}),
    \begin{equation}
      \label{eq:20}
      \begin{split}
        \mathbf{I}_0^{-1}\left(\left[0,4\sqrt{2c_1}\right] \times \ldots
          \left[0,4\sqrt{2c_n}\right]\right) & = \left(I^{-1}_0
          \left(\left[0,4\sqrt{2c_1}\right] \right)\right) \times \ldots \times
        \left(I^{-1}_0 \left(\left[0,4\sqrt{2c_n}\right]\right)
        \right) \\
        &=
        \left[0,c_1\right] \times \ldots \left[0,c_n\right].
      \end{split}
    \end{equation}
    \noindent
    Equation \eqref{eq:18} follows by combining equations
    \eqref{eq:19} and \eqref{eq:20} with the equality $\Phi_{\epsilon_2}\left(P_{\epsilon_2}\right) =
    \mathbf{I}_0^{-1}\left(4|A|\right) \cap
    \left[\frac{\epsilon_2}{2},+\infty\right[^n$. Equation \eqref{eq:18} implies that
    \begin{equation}
      \label{eq:21}
      \left[0,I_{\epsilon_2}(c_1)\right] \times \ldots
      \left[0,I_{\epsilon_2}(c_n)\right] = \mathbf{I}_{\epsilon_2}\left(\left[\frac{\epsilon_2}{2},c_1\right] \times \ldots \times
        \left[\frac{\epsilon_2}{2},c_n\right]\right) \subset \mathbf{I}_{\epsilon_2}\left(\Phi_{\epsilon_2}\left(P_{\epsilon_2}\right)\right),
    \end{equation}
    \noindent
    where the first equality follows from the definition of
    $\mathbf{I}_{\epsilon_2}$ and properties of $I_{\epsilon_2}$ (see
    the proof of Lemma \ref{lemma:action}). Since $\epsilon_1 >
    \epsilon_2$, the proof of Proposition
    \ref{prop:decreasing_epsilon} gives that, for all $i=1,\ldots, n$,
    $ I_{\epsilon_1}(c_i) < I_{\epsilon_2}(c_i) $, which, together
    with equation \eqref{eq:21} gives that
    $\mathbf{I}_{\epsilon_1}\left(\mathbf{c}\right) =
    \left(I_{\epsilon_1}(c_1),\ldots,I_{\epsilon_1}(c_n)\right) \in
    \mathbf{I}_{\epsilon_2}\left(\Phi_{\epsilon_2}\left(P_{\epsilon_2}\right)\right)$. Since
    $\mathbf{c} \in \Phi_{\epsilon_1}\left(P_{\epsilon_1}\right)$ is
    arbitrary, the above argument shows that
    $\mathbf{I}_{\epsilon_1}\left(\Phi_{\epsilon_1}\left(P_{\epsilon_1}\right)\right)
    \subset
    \mathbf{I}_{\epsilon_2}\left(\Phi_{\epsilon_2}\left(P_{\epsilon_2}\right)\right)$
    as desired.
  \end{proof}

  Claims \ref{claim:domain_nested} and \ref{claim:images_nested} yield that
  the family of symplectic submanifolds
  $\left\{P_{\epsilon}\right\}_{\epsilon >0}$ satisfies property
  \ref{item:6}. This completes the proof.
\end{proof}

Lemma \ref{lemma:family} allows to prove Theorem \ref{thm:sympl}.

\begin{proof}[Proof of Theorem \ref{thm:sympl}]
  Fix a balanced region $A \subset \R^n$. The aim is to construct a symplectomorphism between
  $B^n_{\infty} \times_L A$ and the toric domain $X_{4|A|} $. Let
  $\left\{P_{\epsilon}\right\}_{\epsilon >0}$ be the family of
  symplectic submanifolds with compact closure of $B^n_{\infty} \times_L A$
  as in Lemma \ref{lemma:family}. Pick a decreasing
  sequence $\epsilon_k$ converging to $0$. By property \ref{item:6},
  for all $l > k$, $\mathrm{cl}\left(P_{\epsilon_l}\right) \subset \mathrm{cl}\left(P_{\epsilon_k}\right)$; moreover, combining properties
  \ref{item:7} and \ref{item:6} in Claim \ref{lemma:family}, $\bigcup\limits_{k \geq 1} \mathrm{cl}\left(P_{\epsilon_k}\right) =  B^n_{\infty}
  \times_L A$ and $\bigcup\limits_{k \geq 1}
  \boldsymbol{\Psi}_{\epsilon_k}\left(\mathrm{cl}\left(P_{\epsilon_k}\right)\right) =
  X_{4|A|}$.

  To construct the desired symplectomorphism we use an argument of \cite{mcduff_bl}
  which also appears in \cite[Proof of Theorem 3]{vinicius}. Fix $k
  \geq 2$. Observe that, for any $t \in
  \left[\epsilon_k,\epsilon_{k-1}\right]$,
  $$ \boldsymbol{\Psi}_{\epsilon_{k-1}}\left(\mathrm{cl}\left(P_{\epsilon_{k-1}}\right)\right)
  \subset \boldsymbol{\Psi}_{t}\left(\mathrm{cl}\left(P_t\right)\right) \subset
  \boldsymbol{\Psi}_{\epsilon_{k}}\left(\mathrm{cl}\left(P_{\epsilon_{k}}\right)\right), $$
  \noindent
  where the inclusions follow from property \ref{item:6} in Claim \ref{lemma:family}. Thus it
  is possible to consider an isotopy of symplectic embeddings
  $\boldsymbol{\Psi}^{-1}_t \circ \boldsymbol{\Psi}_{\epsilon_{k-1}} :
  \mathrm{cl}\left(P_{\epsilon_{k-1}}\right) \hookrightarrow \mathrm{cl}\left(P_{\epsilon_k}\right)$ for $t \in
  \left[\epsilon_k,\epsilon_{k-1}\right]$. Using the symplectic
  isotopy extension theorem (cf. \cite[Proposition 4]{auroux} and
  \cite{banyaga}), there exists an isotopy of symplectomorphisms
  $\chi_t : \mathrm{cl}\left(P_{\epsilon_k}\right) \to \mathrm{cl}\left(P_{\epsilon_k}\right)$ for $t \in
  \left[\epsilon_k,\epsilon_{k-1}\right]$ such that
  \begin{itemize}[leftmargin=*]
  \item $\chi_t|_{P_{\epsilon_{k-1}}} = \boldsymbol{\Psi}^{-1}_t \circ
    \boldsymbol{\Psi}_{\epsilon_{k-1}}$, and
  \item $\chi_t$ is the identity away from some neighborhood of $\mathrm{cl}\left(P_{\epsilon_{k-1}}\right)$.
  \end{itemize}
  The map
  $\widetilde{\boldsymbol{\Psi}}_{\epsilon_k} :=
  \boldsymbol{\Psi}_{\epsilon_k} \circ \chi_{\epsilon_k} :
  \mathrm{cl}\left(P_{\epsilon_k}\right) \hookrightarrow \R^{2n}$ is a
  symplectic embedding satisfying
  \begin{itemize}[leftmargin=*]
  \item
    $\widetilde{\boldsymbol{\Psi}}_{\epsilon_k}|_{\mathrm{cl}\left(P_{\epsilon_{k-1}}\right)}
    = \boldsymbol{\Psi}_{\epsilon_{k-1}}$, and
  \item $\widetilde{\boldsymbol{\Psi}}_{\epsilon_k}$ equals
    $\boldsymbol{\Psi}_{\epsilon_k}$ away from some neighborhood of $\mathrm{cl}\left(P_{\epsilon_{k-1}}\right)$.
  \end{itemize}
  Setting
  \begin{equation*}
    \boldsymbol{\Psi}\left(\mathbf{x},\mathbf{y}\right):=
    \begin{cases}
      \boldsymbol{\Psi}_{\epsilon_1}\left(\mathbf{x},\mathbf{y}\right)
      & \text{ if } \left(\mathbf{x},\mathbf{y}\right) \in
      \mathrm{cl}\left(P_{\epsilon_1}\right), \\
      \widetilde{\boldsymbol{\Psi}}_{\epsilon_k}\left(\mathbf{x},\mathbf{y}\right)
      & \text{ if } \left(\mathbf{x},\mathbf{y}\right) \in
      \mathrm{cl}\left(P_{\epsilon_k}\right) \smallsetminus \mathrm{cl}\left(P_{\epsilon_{k-1}}\right),
    \end{cases}
  \end{equation*}
  \noindent
  we obtain a well-defined map $\boldsymbol{\Psi} : \bigcup\limits_{k \geq 1} \mathrm{cl}\left(P_{\epsilon_k}\right) =  B^n_{\infty}
  \times_L A \to \R^{2n}$. The above properties imply
  that $\Psi$ is a symplectic embedding of $B^n_{\infty}
  \times_L A$ into $\R^{2n}$ whose
  image equals $\bigcup\limits_{k \geq 1}
  \boldsymbol{\Psi}_{\epsilon_k}\left(\mathrm{cl}\left(P_{\epsilon_k}\right)\right) =
  X_{4|A|}$ as desired.
\end{proof}

\bibliography{bib}{}
\bibliographystyle{plain}

\end{document}